\def \R{{\hbox{\vrule width 0.6pt height 6.8pt depth -.2pt\kern-0.2pt
R}}}
\def \P{{\hbox{\vrule width 0.6pt height 6.8pt depth -.2pt\kern-0.2pt
P}}}
\def \R {\mathbb R}
\def \N {\mathbb N}
\def \P {\mathbb P}
\def\MOD#1{{|\kern -.16em |\kern -.16em | #1 | \kern -.16em |\kern
 -.16em |}}
\def \epsilon {\varepsilon}
\newtheorem{theo}{\bf THEOREM}[section]
\newtheorem{lem}[theo]{\bf LEMMA}
\newtheorem{pro}[theo]{\bf PROPOSITION}
\newtheorem{cor}[theo]{\bf COROLLARY}
\newtheorem{defi}[theo]{\bf DEFINITION}
\newtheorem{rem}[theo]{\bf REMARK}
\newcommand{\eps}{\varepsilon}
\newenvironment{proof_prop3.5}[1][\bf{Proof of Proposition \ref{prop-reg}}]{\noindent{\it{#1}}}{\hfill$\square$\\ }
\newenvironment{proof_prop3.9}[1][\bf{Proof of Proposition 3.10}]{\noindent{\it{#1}}}{\hfill$\square$\\ }
\newenvironment{proof_prop3.8}[1][\bf{Proof of Proposition 3.8}]{\noindent{\it{#1}}}{\hfill$\square$\\ }
\newenvironment{proof_prop3.3}[1][\bf{Proof of Proposition 3.4}]{\noindent{\it{#1}}}{\hfill$\square$\\ }
\newenvironment{proof_prop3.4}[1][\bf{Proof of Proposition
  3.5}]{\noindent{\it{#1}}}{\hfill$\square$\\ }
\numberwithin{equation}{section}
\begin{document}
\begin{center}
{\Large{\sc Construction of a blow-up solution for a perturbed nonlinear heat equation with a gradient  and a non-local  term  }}

\bigskip

Bouthaina Abdelhedi

\medskip
\centerline{\it \small Department of Mathematics}%\\
\centerline{\it\small Faculty of Sciences of Sfax,}%\par
\centerline{\it\small  BP1171, Sfax 3000, Tunisia}%\\
 bouthaina.abdelhedi@fss.usf.tn\\

  \bigskip
  Hatem Zaag
    \medskip
  \centerline{\it \small CNRS LAGA (UMR 7539)}\\
  \centerline{\it\small Universit\'e Sorbonne Paris Nord,}%\\
  \centerline{\it\small  99 Avenue Jean-Baptiste Cl\'ement 93430 Villetaneuse, France}
   hatem.zaag@univ-paris13.fr
\end{center}

\begin{abstract} We consider in this paper a perturbation of the standard semilinear heat equation by a term involving the space derivative and a non-local term. We prove the existence of a blow-up solution, and give its blow-up profile.
Our proof relies on the  following   method: we  linearize the equation (in similarity variables) around the expected profile, then, we control the nonpositive directions of the spectrum thanks to the decreasing properties of the kernel. Finally,  we use a topological argument to control the positive directions of the spectrum.
%The geometric interpretation of the positive directions in terms of the blow-up point and the blow-up time enables us to show the
\end{abstract}
~~\\
\textbf{ AMS 2010 Classification:}  35B20, 35B44, 35K55.\\
\textbf{Keywords:} Blow-up, nonlinear heat equation, gradient term, non-local term.\\
\section{Introduction}
We are interested in this paper in  the following nonlinear parabolic equation
\begin{equation}\label{eq_u}
\left \{
\begin{array}{lcl}
 u_{t}&=&\Delta u+|u|^{p-1}u+\mu |\nabla u|\displaystyle \int_{B(0, |x|)}|u|^{q-1},\\
 u(0)&=&u_0\in W^{1, \infty}(\R^N),
\end{array}
\right.
\end{equation}
where $u=u(x,t)\in \R$, $x\in \R^N$ and the parameters $p, q$ and $\mu$ are such that \begin{equation}\label{hyp}
\displaystyle  p>3, \quad \frac{N}{2}(p-1)+1<q<\frac{N}{2}(p-1) +\frac{p+1}{2},\; \mu\in \R.
\end{equation}
When $\mu=0$,  blow-up results for the equation $(\ref{eq_u})$  have been extensively studied.\\
The existence of blow-up solutions has been proved by several authors, see Fujita \cite{fujita}, Ball \cite{ball}.
 We say that  $u$ blows up in finite time $T$ in the sense that $$\|u(t)\|_{L^\infty}\to \infty, \quad t\to T.$$
We call $T$ the blow-up time of $u$.\\
Many works have been describing the asymptotic blow-up behavior near a given blow-up point, see Giga and Kohn \cite{giga1}, \cite{giga2}, Weissler \cite{weissler}, Filippas, Kohn and Liu \cite{filippas1}, \cite{filippas2}, Herrero and Vel\'azquez \cite{herrero1}, \cite{herrero2}, \cite{herrero3},
\cite{herrero4},  Merle and Zaag \cite{MZ97}, \cite{MZ98}, \cite{MZ}.\\
Also, many papers have been  devoted to the blow-up profile; see Bricmont and Kupiainen \cite{bricmont}, Merle and Zaag \cite{MZ97}, Berger and Kohn \cite{berger} and Nguyen and Zaag \cite{NZ1}, \cite{NZ2}.\\
Particulary, these authors  constructed a solution $u$ which approaches an explicit universal profile $f$ depending only on $p$ and independent from initial data  as follows

\begin{equation}
\|(T-t)^{\frac{1}{p-1}} u(x,t)-f(\frac{x}{\sqrt{(T-t)|\log(T-t)|}})\|_{L^\infty}\to_{t\to T}0,
\end{equation}
where $f$ is the profile defined by
\begin{equation}\label{profile}
\displaystyle f(z)=(p-1+\frac{(p-1)^2}{4p}|z|^2)^{-\frac{1}{p-1}}.
\end{equation}
Such a construction relies on a two-step method:
\begin{itemize}
\item
The guess of the limiting profile, based on a formal approach in the so-called similarity variables (defined in $(\ref{var_sim})$ below); this is particularly well explained in Berger and Kohn \cite{berger}, Filippas and  Kohn  \cite{filippas1} and  Bricmont and Kupiainen \cite{bricmont}.
\item The rigorous proof performed in similarity variables, where
the authors linearize the equation around the introduced profile, and control the nonpositive part of the spectrum thanks to the decaying properties of the linear operator. Then, they use a topological argument for  the  positive directions of the spectrum.
\end{itemize}
An interesting  question following the above results is to tell how robust is the construction method?\\
A first result in that direction was obtained for the following
 equation with a gradient term:
\begin{equation}\label{1.4.5}
u_{t}=\Delta u+|u|^{p-1}u+\mu |\nabla u|^q.
\end{equation}
For this equation, we mention
the  blow-up results   obtained by  Souplet, Tayachi and Weissler \cite{STW}, when $q=\frac{2p}{p+1}$, $\mu<0$,  Galaktionov and Vazquez \cite{galak1}, \cite{galak2}, when $q=2$ and $\mu>0$,  Ebde and Zaag \cite{EZ}, when $q<\frac{2p}{p+1}$ and  Tayachi and Zaag \cite{TZ}, when $q=\frac{2p}{p+1}$, $\mu>0$. There is also a numerical result  by Nguyen \cite{N}. In \cite{EZ} and \cite{TZ}, the authors derive the blow-up profile.
Because of the presence of the perturbation involving a nonlinear gradient term, they obtain the convergence in $W^{1, \infty}(\R^N)$.\\
We would like to mention that the construction method has proved to be successful in a different class of PDEs involving non-local terms, namely the following equation modeling Micro Electrical Mechanical Systems (MEMS):
$$u_{t}=\Delta u+\displaystyle \frac{\lambda}{(1-u)^2(1+\gamma\int_\Omega\frac{1}{1-u}dx)^2},$$
see Duong and Zaag \cite{DZ}.\\
In this paper, we would like to consider a mixed-type equation involving a gradient term together with  a non-local term, namely, equation $(\ref{eq_u})$.
 We note that the equation $(\ref{eq_u})$ is a new  class of perturbed semilinear  heat equation, but compared to the previous works, our perturbation is not trivial since we have both a non-local and a  gradient term. \\

The aim of this paper is to  construct a  solution of the equation $(\ref{eq_u})$ which approaches the same profile $f$ as for the case $\mu=0$. More precisely, we prove the following result.
\begin{theo}\label{th1}~\\
Let $\mu\in \R$, $p>3,$ and $q\in \R$ such that $\displaystyle
\frac{N}{2}(p-1)+1<q<\frac{N}{2}(p-1) +\frac{p+1}{2}$. Consider then an arbitrary   $\beta$  such that
\begin{equation}\label{beta}
 0<\beta <\frac{2}{p-1},\;\mbox{if}\;\mu=0\; \mbox{ and }\;\displaystyle \frac{N}{q-1}<\beta<\frac{2}{p-1},\;\mbox{if}\; \mu\neq 0.
\end{equation}
There exists $T>0$ such that equation $(\ref{eq_u})$ has a solution $u(x,t)$ such that $u$  blows up at time  $T$ at the point $a=0$. Moreover,  for all $t\in [0, T)$, for all $x\in \R^N$,
$$\displaystyle |u(x,t)-(T-t)^{-\frac{1}{p-1}}f(\frac{x}{\sqrt{(T-t)|\log(T-t)|}})|\leq \frac{C}{1+(\frac{|x|^2}{T-t})^{\frac{\beta}{2}}}\frac{(T-t)^{-\frac{1}{p-1}}}{|\log(T-t)|^{\frac{1-\beta}{2}}},$$
and
$$\displaystyle |\nabla u(x,t)-\frac{(T-t)^{-\frac{1}{2}-\frac{1}{p-1}}}{\sqrt{|\log(T-t)|}}\nabla f(\frac{x}{\sqrt{(T-t)|\log(T-t)|}})|\leq \frac{C}{1+(\frac{|x|^2}{T-t})^{\frac{\beta}{2}}}\frac{(T-t)^{-\frac{1}{2}-\frac{1}{p-1}}}{|\log(T-t)|^{\frac{1-\beta}{2}}},$$
where $f(z)=(p-1+b|z|^2)^{-\frac{1}{p-1}}$, $z\in \R^N$, $b=\frac{(p-1)^2}{4p}$.\\
\end{theo}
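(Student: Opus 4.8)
The plan is to follow the now-classical Bricmont--Kupiainen--Merle--Zaag scheme, adapted to handle the simultaneous presence of the gradient term and the non-local term. First I would pass to similarity variables: setting $w(y,s) = (T-t)^{\frac{1}{p-1}} u(x,t)$ with $y = \frac{x}{\sqrt{T-t}}$, $s = -\log(T-t)$, the equation \eqref{eq_u} becomes an equation of the form $\partial_s w = \mathcal{L} w - \frac{1}{p-1} w + |w|^{p-1}w + \mu e^{-\theta s}\, N(w)$, where $\mathcal{L} = \Delta - \frac{1}{2} y\cdot\nabla + 1$ is the self-adjoint operator (in the weighted space $L^2_\rho$ with $\rho(y) = e^{-|y|^2/4}$) whose spectrum is $\{1 - \frac{m}{2} : m \in \mathbb{N}\}$, and where the non-local/gradient contribution carries a genuinely decaying prefactor $e^{-\theta s}$ for some $\theta > 0$ thanks to the scaling and the hypotheses \eqref{hyp} on $q$. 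The decay exponent is precisely where the lower bound $q > \frac{N}{2}(p-1)+1$ is used, and it also explains the constraint $\beta > \frac{N}{q-1}$ when $\mu \neq 0$: the perturbation must be integrable against the kernel and must not destroy the profile.

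Next I would introduce $v = w - \varphi$, where $\varphi(y,s) = f\big(\frac{y}{\sqrt{s}}\big) + \frac{a}{s}$ is the approximate profile (with the standard lower-order correction $\frac{a}{s}$, $a = \frac{(p-1)^2}{4p} \cdot \frac{N}{2}$ or similar), and derive the equation for $v$, $\partial_s v = (\mathcal{L} + V(y,s)) v + B(v) + R(y,s) + \mu e^{-\theta s} G(y,s,v)$, where $V$ is the potential coming from linearizing $|w|^{p-1}w$ around $\varphi$, $B$ is quadratic in $v$, $R$ is the (small) generated error $\partial_s \varphi - \mathcal{L}\varphi + \ldots$, and $G$ collects the perturbation terms. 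Then I would set up the shrinking set: decompose $v$ according to the spectrum of $\mathcal{L}$ into the components $v_0, v_1$ (the nonnegative modes $m = 0, 1$), $v_2$ (the null mode $m = 2$), and $v_-$ (the negative part $m \geq 3$), further splitting into an "inner" region $|y| \leq K\sqrt{s}$ and an "outer" region, and define $V_A(s)$ to be the set of $v$ with $|v_0|, |v_1| \leq \frac{A^2}{s^2}$, $|v_{2,ij}| \leq \frac{A \log s}{s^2}$, $\|v_-(y)(1+|y|^3)^{-1}\|_{L^\infty} \leq \frac{A}{s^2}$, and $\|v_e\|_{L^\infty} \leq \frac{A^2}{\sqrt s}$, with the analogous bounds for $\nabla v$ as dictated by the $W^{1,\infty}$ statement of Theorem \ref{th1}.

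The heart of the argument is then a priori estimates showing that if $v(s) \in V_A(s)$ for $s \in [s_0, s_1]$ and $v(s_1) \in \partial V_A(s_1)$, then actually all components except $(v_0, v_1)$ are strictly inside their bounds at $s_1$, while the two-dimensional vector $(v_0, v_1)$ must exit transversally. For the negative and outer modes one uses the decay of the semigroup $e^{\sigma \mathcal{L}}$ against the kernel together with the smallness of $V$, $B$, $R$, and — crucially — the factor $e^{-\theta s}$ which makes the new perturbation $\mu e^{-\theta s} G$ harmless, being dominated by the error term $R$ already present. For the gradient bounds one differentiates the equation and runs a parallel argument, using parabolic regularity to control $\nabla v$ in the outer region; this is where the $q < \frac{N}{2}(p-1) + \frac{p+1}{2}$ upper bound enters, ensuring the gradient part of the perturbation is still subcritical. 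Finally, a topological (Brouwer degree / no-retraction) argument on the finite-dimensional exit map through $(v_0, v_1)$ produces initial data for which $v(s) \in V_A(s)$ for all $s \geq s_0$, which translates back to the desired blow-up solution and profile estimates in the original variables.

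I expect the main obstacle to be the treatment of the non-local term $\int_{B(0,|x|)} |u|^{q-1}$ in similarity variables. Unlike the MEMS case in \cite{DZ} where the non-local term is an integral over a fixed domain $\Omega$, here the domain of integration is the ball $B(0,|x|)$ whose radius is the running spatial variable, so after rescaling it becomes $\int_{B(0,|y|)} |w(z,s)|^{q-1} s^{N/2}\, dz$ (up to constants), a quantity that is neither a constant nor a pointwise function of $w(y,s)$; one must show it is controlled in $L^\infty_y$ on $V_A$, uniformly handle its contribution to the potential and to the quadratic/error terms, estimate its spatial derivative for the $\nabla$-bounds, and verify that the net prefactor decay $e^{-\theta s}$ genuinely survives — the balance of powers of $s$ and $(T-t)$ coming from the $s^{N/2}$ Jacobian, the $|w|^{q-1} \sim (T-t)^{-\frac{q-1}{p-1}}$ scaling, and the gradient factor $|\nabla u| \sim (T-t)^{-\frac{1}{2}-\frac{1}{p-1}}$ is exactly what forces the two-sided condition \eqref{hyp} on $q$, and getting every exponent right in the inner and outer regions simultaneously is the delicate computational core of the proof.
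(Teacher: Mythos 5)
Your plan follows the same overall strategy as the paper (similarity variables, linearization around a profile, shrinking set, finite--dimensional reduction plus a degree argument, parabolic regularity for the gradient), and you correctly single out the non-local term as the main new difficulty. However, as written it cannot yield the theorem \emph{as stated}, because the statement carries the spatial weight $\bigl(1+(\tfrac{|x|^2}{T-t})^{\beta/2}\bigr)^{-1}$, i.e.\ in similarity variables a bound of $(1+|y|^\beta)(w-f)$ and $(1+|y|^\beta)\nabla(w-f)$ by $Cs^{-\frac{1-\beta}{2}}$ uniformly in $y$. Your shrinking set only contains the classical conditions ($\|v_e\|_{L^\infty}\le A^2/\sqrt s$, etc.), so at best it reproduces the unweighted Merle--Zaag profile estimate. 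The paper has to (i) add the new condition $\|(1+|y|^\beta)v_e\|_{L^\infty}\le A^2 s^{-\frac{1-\beta}{2}}$ to the shrinking set (Definition \ref{def_shrinking}, second estimate in \eqref{ge}), (ii) prove a genuinely new weighted estimate for the kernel $K(s,\sigma)$ of $\mathcal L+V$ outside the blow-up region (item 4 of Lemma \ref{lem_kernel2}) and weighted smoothing estimates for $e^{\theta\mathcal L}$ (item 5 of Lemma \ref{lem_kernel1}), and (iii) run the parabolic regularity/Gronwall argument of Proposition \ref{prop-reg} in the weighted norm; none of this is in your plan. Moreover, your choice of profile $\varphi=f(y/\sqrt s)+\tfrac as$ is incompatible with this weighted framework: the $y$-independent correction $\tfrac as$ is not in $W^{1,\infty}_\beta$, so $(1+|y|^\beta)(w-\varphi)$ and the weighted outer part of the generated error $R$ would be unbounded as $|y|\to\infty$. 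This is precisely why the paper cuts the correction off, taking $\frac{\kappa N}{2ps}\chi_0\bigl(\frac{y}{s^{1/2+\eps}}\bigr)$ in \eqref{n_profile}, and then re-estimates $R$ with this cutoff.

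A second concrete error is the attribution of the two conditions in \eqref{hyp}. The exponential decay of the perturbation in similarity variables, $e^{-\gamma s}$ with $\gamma=\frac{p-q}{p-1}+\frac{N-1}{2}>0$, is equivalent to the \emph{upper} bound $q<\frac N2(p-1)+\frac{p+1}{2}$, not to the lower bound as you assert; the \emph{lower} bound $q>\frac N2(p-1)+1$ is what makes $\frac{N}{q-1}<\frac 2{p-1}$, hence allows a $\beta$ with $\beta(q-1)>N$, and this convergence of $\int_{\R^N}(1+|y'|^\beta)^{-(q-1)}dy'$ is exactly what controls $\int_{B(0,|y|)}|v+\varphi|^{q-1}$ by a power of $s$ on the shrinking set (Lemma \ref{lem_new_term} and Proposition \ref{prop_new_term}), so that, combined with $e^{-\gamma s}$, the new term is exponentially small. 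With the roles swapped, your verification that the new term is harmless (you claim it is ``dominated by the error term $R$'') does not go through as written; in the paper it is not absorbed into $R$ but estimated separately, in all the components of the decomposition and in the weighted outer norm, using the above two facts. Fixing these two points --- the weighted functional setting (shrinking set, kernel and semigroup estimates, cutoff profile, and well-posedness of the Cauchy problem in $W^{1,\infty}_\beta$) and the correct use of the two bounds on $q$ --- is what separates your outline from an actual proof of the stated theorem.
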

\begin{rem}When $\mu \neq 0$, we see from  \eqref{hyp} that $\frac N{q-1}<\frac 2{p-1}$, hence, condition \eqref{beta} is not empty.
\end{rem}
\begin{rem}
Note that our Theorem  improves  the results  of \cite{MZ97} already  in the case of the standard nonlinear heat equation, i.e. when $\mu=0$. Indeed,  our profile  is sharper that the profile derived in \cite{MZ97}, in the sense that we divide here the bound by $1+(\frac{|x|^2}{(T-t)})^{\frac{\beta}{2}}$. Thus,  we find more information about the tail (in space) of the blow-up solution.
\end{rem}
More precisely, the solution constructed in Theorem $\ref{th1}$ satisfies the following result:
\begin{cor}\label{corol}
Let $u$ be the solution of $(\ref{eq_u})$ constructed in Theorem  $\ref{th1}$ and $T$ its blow-up time.\\
For  all  $|x|\ge (T-t)^\frac{1}{2} |\log(T-t)|^{\frac{1}{2}[1+(\frac{2}{p-1}-\beta)^{-1}]}$, we have 
 $$\displaystyle |u(x,t)|\leq \frac{C}{|x|^\beta}\frac{(T-t)^{\frac{\beta}{2}-\frac{1}{p-1}}}{| \log(T-t)|^\frac{1-\beta}{2}}\leq C\frac{(T-t)^{-\frac{1}{p-1}}}{| \log(T-t)|^{\frac{1}{2-\beta(p-1)}}},$$
and
$$\displaystyle |\nabla u(x,t)|\leq \frac{C}{|x|^\beta}\frac{(T-t)^{\frac{\beta}{2}-\frac{1}{p-1}-\frac{1}{2}}}{|\log(T-t)|^\frac{1-\beta}{2}}\leq C\frac{(T-t)^{-\frac{1}{p-1}-\frac{1}{2}}}{| \log(T-t)|^{\frac{1}{2-\beta(p-1)}}}.$$
\end{cor}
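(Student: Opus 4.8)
\medskip
\noindent\textbf{Proof strategy for Corollary \ref{corol}.}
The plan is to read the corollary off the two pointwise estimates of Theorem \ref{th1} by elementary manipulations only; no new PDE analysis is needed. The single point with genuine content is that the threshold $|x|\ge (T-t)^{1/2}|\log(T-t)|^{\frac12[1+(\frac{2}{p-1}-\beta)^{-1}]}$ is calibrated exactly so that the contribution of the self-similar profile $f$ is absorbed by the error term in Theorem \ref{th1}, and so that the power of $|\log(T-t)|$ in the final right-hand side is the one claimed.

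Set $s=|\log(T-t)|$, $z=x/\sqrt{(T-t)s}$ and $\xi=|x|^2/(T-t)$, so that $|z|^2=\xi/s$. First I would record two trivial bounds on the profile. Since $p-1+b|z|^2\ge b|z|^2$, the monotonicity of $r\mapsto r^{-1/(p-1)}$ gives $f(z)\le b^{-\frac{1}{p-1}}|z|^{-\frac{2}{p-1}}$, and differentiating, $|\nabla f(z)|=\frac{2b}{p-1}|z|\,(p-1+b|z|^2)^{-\frac{p}{p-1}}\le C|z|^{-\frac{p+1}{p-1}}$ for all $z\neq0$. Substituting $|z|^2=\xi/s$, $\xi=|x|^2/(T-t)$ and multiplying by the prefactors in Theorem \ref{th1}, the ``profile part'' of $u$ at $(x,t)$ is bounded by $C|x|^{-\frac{2}{p-1}}s^{\frac{1}{p-1}}$ and that of $\nabla u$ by $C|x|^{-\frac{p+1}{p-1}}s^{\frac{1}{p-1}}$; in both cases the powers of $(T-t)$ cancel, which is the scaling invariance of the profile. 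On the region under study one has $\xi\ge s^{1+(\frac{2}{p-1}-\beta)^{-1}}\to\infty$, hence $1+\xi^{\beta/2}\ge\xi^{\beta/2}$, and the error terms of Theorem \ref{th1} are at most $C|x|^{-\beta}(T-t)^{\frac{\beta}{2}-\frac{1}{p-1}}s^{-\frac{1-\beta}{2}}$, with the extra factor $(T-t)^{-1/2}$ for $\nabla u$.

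The argument then closes with two algebraic checks, carried out with the help of the identities $\frac{1}{p-1}=\frac12(\frac{2}{p-1}-\beta)+\frac{\beta}{2}$ and $2-\beta(p-1)=(p-1)(\frac{2}{p-1}-\beta)$. First, isolating $|x|$, the inequality ``(profile part of $u$) $\le$ (error term)'', i.e. $|x|^{-\frac{2}{p-1}}s^{\frac{1}{p-1}}\le C|x|^{-\beta}(T-t)^{\frac{\beta}{2}-\frac{1}{p-1}}s^{-\frac{1-\beta}{2}}$, is equivalent to $|x|^2\ge c\,(T-t)s^{1+(\frac{2}{p-1}-\beta)^{-1}}$ with a constant $c$ that can be made $\le1$ by enlarging the constant in the conclusion; hence it holds on the whole region, and by the triangle inequality $|u(x,t)|\le C|x|^{-\beta}(T-t)^{\frac{\beta}{2}-\frac{1}{p-1}}s^{-\frac{1-\beta}{2}}$, the first displayed bound. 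Second, on the region $|x|^\beta\ge (T-t)^{\beta/2}s^{\frac{\beta}{2}[1+(\frac{2}{p-1}-\beta)^{-1}]}$, and since $\frac{\beta}{2}[1+(\frac{2}{p-1}-\beta)^{-1}]+\frac{1-\beta}{2}=\frac{1}{2-\beta(p-1)}$, the first bound immediately yields $|u(x,t)|\le C(T-t)^{-\frac{1}{p-1}}s^{-\frac{1}{2-\beta(p-1)}}$, the second displayed bound. The two gradient estimates follow in exactly the same way, carrying the extra factor $(T-t)^{-1/2}$ throughout and using the bound on $|\nabla f|$ in place of the one on $f$; there the analogue of the first check only requires $|x|^2\ge c\,(T-t)s$, which is weaker than the stated threshold, so the same region works. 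Since the whole thing reduces to Theorem \ref{th1} and these two exponent identities, the only real work is the bookkeeping — there is no conceptual obstacle.
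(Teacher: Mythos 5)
Your proposal is correct and follows essentially the same route as the paper: the paper also deduces the corollary directly from Theorem \ref{th1} together with the decay $f(z)\sim (b|z|^2)^{-\frac{1}{p-1}}$ (and the analogous bound for $\nabla f$) from Lemma \ref{Lem_profile}, then uses the stated lower bound on $|x|$ to absorb the profile contribution and recombine the powers of $|\log(T-t)|$, exactly as in your two exponent checks. Your write-up is merely more explicit about the calibration of the threshold exponent and the absorption of constants, which the paper leaves as a one-line conclusion.
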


\begin{rem}\label{rem}
We suspect the origin to be the only blow-up point of $u$. Unfortunately, because of the non-local term in equation $(\ref{eq_u})$, we couldn't apply the localization and
iteration method presented by Giga and Kohn in Theorem 2.1 page $850$ of \cite{giga1}, to prove the single-point blow-up property.\\
Nevertheless, we see from Corollary \ref{corol} that for any $x_0\in \R^N\backslash\{0\}$ and in some cylinder around $(x_0, T)$ the solution is uniformly negligible with respect to the ODE rate $(T-t)^{-\frac{1}{p-1}}$, which is in our opinion a strong evidence  showing that the solution doesn't blow up at $x_0$.  More precisely,  for any $x_0\in \R^N \backslash\{0\}$ and $\eps_0>0$, there exists $t_0(x_0, \eps_0)<T$ such that for all $t\in [t_0(x_0, \eps_0),T)$ and  $x\in B(x_0, \frac{|x_0|}{2})$,
$$|u(x,t)|<\frac{\eps_0}{(T-t)^{\frac{1}{p-1}}}.$$
\end{rem}
\begin{rem}
The local Cauchy problem for  equation (\ref{eq_u}) can be solved in the weighted  functional space  \begin{equation}\label{fun_spa}
W^{1, \infty}_\beta(\R^N)=\{g; \; (1+|y|^{\beta}) g \in L^{\infty},\; (1+|y|^{\beta})\nabla g \in L^{\infty } \},
\end{equation} using a fixed point argument. For the reader's convenience we  prove this  result in Appendix $C$.
\end{rem}

Thanks to a better choice of initial data, we may derive this stronger version of Theorem \ref{th1}:
\begin{cor}\label{cor_syme}
\begin{enumerate}
\item By a good choice of parameters, we can make sure that the solution constructed in Theorem \ref{th1} is nonnegative and symmetric with respect to the origin.
\item If in addition $\mu<0$, then the solution $u$  constructed  in Theorem \ref{th1} blows up only at the origin.
\end{enumerate}
\end{cor}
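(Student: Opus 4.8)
The plan is to re-enter the construction carried out in the proof of Theorem \ref{th1} and to verify that it still works under two extra constraints on the initial data — evenness in $x$ and nonnegativity — and then, once $u\ge 0$ is available and $\mu<0$, to exploit the resulting fact that $u$ is a subsolution of the unperturbed equation $v_t=\Delta v+v^p$, for which single-point blow-up is classical.

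\textbf{Part 1 (symmetry and sign).} Recall that in the proof of Theorem \ref{th1} the initial datum belongs to a $(1+N)$-parameter family $\varphi_{d_0,d_1}$, where $d_0\in\mathbb R$ and $d_1\in\mathbb R^N$ control, respectively, the two unstable eigenvalues $\lambda_0=1$ and $\lambda_1=1/2$ (the latter of multiplicity $N$) of the linearized operator in similarity variables, the corresponding eigenfunctions being the constants and the linear forms $y_1,\dots,y_N$. First I would require $\varphi$ to be even in $x$: since the profile $f$ is radial and the $d_1$-term is the only non-even ingredient, this amounts to setting $d_1=0$. Evenness is preserved by the flow because every term of \eqref{eq_u} — $\Delta u$, $|u|^{p-1}u$, $|\nabla u|$, and $\int_{B(0,|x|)}|u|^{q-1}$, which depends only on $|x|$ — maps even functions to even functions; hence the $\lambda_1$-component of $w$ stays identically zero and only the scalar mode $\lambda_0$ must be handled. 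The a priori estimates defining the shrinking set in the proof of Theorem \ref{th1} restrict verbatim to the even subspace, and the topological argument collapses to a one-dimensional shooting (intermediate-value) argument over $d_0\in[-1,1]$. For nonnegativity I would use that, on the support of the cut-off entering $\varphi_{d_0,0}$, the leading term $f(\,\cdot/\sqrt{s_0}\,)$ is bounded below by a fixed positive constant while the corrections are $O(1/s_0)$; taking $s_0=-\log T$ large therefore makes $\varphi_{d_0,0}$ nonnegative on $\mathbb R^N$. Finally I would check that nonnegativity propagates: at a hypothetical first time and point where $u$ touches $0$, this is an interior spatial minimum, so $\nabla u=0$ there and the perturbation $\mu|\nabla u|\int_{B(0,|x|)}|u|^{q-1}$ drops out; one is then reduced to the maximum principle for $u_t=\Delta u+|u|^{p-1}u$, which preserves nonnegativity (rigorously, one argues on large balls with an explicit barrier, or regularizes $t\mapsto\inf_x u(x,t)$).

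\textbf{Part 2 (single blow-up point when $\mu<0$).} With $u\ge 0$ from Part 1 and $\mu<0$ one has $|u|^{p-1}u=u^p$ and $\mu|\nabla u|\int_{B(0,|x|)}|u|^{q-1}\le 0$, so
\[
u_t-\Delta u-u^p\le 0,
\]
i.e. $u$ is a subsolution of $v_t=\Delta v+v^p$. Now fix $x_0\neq 0$. By Corollary \ref{corol} — equivalently Remark \ref{rem} — $(T-t)^{1/(p-1)}|u(x,t)|\to 0$ as $t\to T$, uniformly on $B(x_0,|x_0|/2)$, hence this quantity drops below the dimensional threshold $\varepsilon_0(p,N)$ of the local smallness criterion for $v_t=\Delta v+v^p$ once $t\ge t_0$. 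Comparing $u$ on the parabolic boundary of a small cylinder around $(x_0,T)$ with an explicit bounded supersolution of the pure equation — exactly the Giga–Kohn localization that Remark \ref{rem} could not run because of the non-local term, but which is now legitimate since $u$ subsolves the pure equation — shows that $u$ stays bounded near $(x_0,T)$, so $x_0$ is not a blow-up point. As $x_0\neq 0$ was arbitrary and $0$ is a blow-up point by Theorem \ref{th1}, the blow-up set of $u$ equals $\{0\}$.

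\textbf{Where the difficulty lies.} In Part 1 the symmetry reduction is mostly bookkeeping, but propagating nonnegativity genuinely uses the non-smooth and non-local structure of \eqref{eq_u}; the observation that $|\nabla u|$ vanishes at an interior spatial minimum is the crux, and it must be turned into a clean comparison principle valid in $W^{1,\infty}_\beta(\mathbb R^N)$. In Part 2 the main point is the barrier argument in a shrinking cylinder, but once the subsolution property is secured one may directly invoke the classical local upper estimates for $v_t=\Delta v+v^p$ (Giga and Kohn \cite{giga1}) — and this is precisely where the sign hypothesis $\mu<0$ is used.
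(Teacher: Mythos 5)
Your proposal follows essentially the same route as the paper's own (sketched) argument in the remark following Corollary \ref{cor_syme}: impose $d_1=0$ so that the initial data is symmetric and nonnegative and these properties are preserved by the flow, with the existence proof running as before, and, for $\mu<0$, use nonnegativity to view $u$ as a subsolution of $v_t=\Delta v+v^p$ and invoke the Giga--Kohn localization framework together with the smallness provided by Corollary \ref{corol} to exclude blow-up at any $x_0\neq 0$. The additional details you give (the one-dimensional shooting after the symmetry reduction, the interior-minimum argument for propagating nonnegativity, taking $s_0$ large so the data is nonnegative) are consistent elaborations of that same argument rather than a different approach.
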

\begin{rem} This statement can be easily derived from the proof of Theorem \ref{th1}. For that reason, we only explain the argument here and won't give the proof.
 Regarding part 1, since equation \eqref{eq_u} preserves radial symmetry and nonnegativity, one has simply to impose $d_1=0$ in the expression of initial data given in \eqref{ci} below, and all the proof runs as we wrote it.\\ 
As for part 2, simply note that the sign of $\mu$ allows us to derive an upper bound, thanks to the framework given by Giga and Kohn in Section 2 page 850 of \cite{giga3} (see also page 245 of the book of Quittner and Souplet \cite{QS} for another version of the proof). Since the solution is nonnegative by construction, we conclude the proof.
\end{rem}
\begin{rem} One may wonder why we keep the 2 statements, Theorem \ref{th1} and Corollary \ref{cor_syme}, as Corollary \ref{cor_syme} seems to be stronger. In fact, we keep Theorem \ref{th1}, since we believe it will be  crucial in deriving the stability of the behavior described in Theorem \ref{th1}, with respect to initial data, not only in the radially symmetric class.  Indeed,  as one may see from the case of the standard semilinear heat equation in Merle and Zaag \cite{MZ97} or equation  \eqref{1.4.5} in Tayachi and Zaag \cite{TZ},   stability is a natural by-product of the existence proof, thanks to a geometrical interpretation of the parameters of the finite dimensional problem (i.e $d_0$, $d_1$ in \eqref{ci}) in terms of the blow-up time and the blow-up point.\\
Now, because of the non-local term, equation \eqref{eq_u} is not invariant with respect to translation in time, and this makes a serious difficulty in deriving stability from the existence proof. For that reason, we leave stability to a future work. However, we keep the statement of Theorem \eqref{th1}, since we believe it will be an important piece in the stability proof. 
\end{rem}
The proof of Theorem \ref{th1} is  based on techniques developed by Bricmont and Kupiainen \cite{bricmont}, Merle and Zaag \cite{MZ97} and Tayachi and Zaag \cite{TZ}. This is reasonable since in similarity variables defined below by $(\ref{var_sim})$, the new perturbation term comes with an exponentially decreasing coefficient. Although these modifications do not affect the general framework developed  in the previous work, we need to perform   some crucial modifications with respect to  results found in \cite{bricmont}, \cite{MZ97}, \cite{EZ},     \cite{TZ} in order to control the new term.  Let us mention the crucial modifications:
\begin{itemize}
\item We modify the functional space. Since the perturbation contains $\displaystyle \int_{B(0, |x|)}|u|^{q-1}$, our proofs need some involved argument to control this term. In particular, we need to study the convergence in the new functional space  $W^{1, \infty}_\beta(\R^N)$ defined in \eqref{fun_spa}.
More specifically, some involved parabolic regularity argument is proved to handle the gradient term.\\
\item In order to study  blow-up in the new functional space, we need to modify the definition of the shrinking set (see Definition \ref{def_shrinking} below).  Therefore,   some crucial estimates are needed.\\
\item  Finally, we linearize the equation around a new profile given by $(\ref{n_profile})$ below, showing a cut-off at infinity, so that the profile belongs to the space defined in \eqref{fun_spa}. A good  understanding of the linearized operator is necessary in order to handle the new shrinking set.\\
\end{itemize}

Let us remark that the construction method involves the linearization of the equation (in similarity variables defined below in $(\ref{var_sim})$), with a different treatment of the projections on the solution,  according to the sign of the eigenvalues in the specified eigenspace:
\begin{itemize}
\item The infinite dimensional component of the solution, corresponding to the nonpositive part of the spectrum, is controlled thanks to the decaying properties of the linearized operator; since the  positive part of the spectrum is finite dimensional, we call this step a \textit{finite dimensional reduction}.
\item Then, the positive part of the spectrum is controlled thanks to a topological argument, based on index theory.
\end{itemize}
\bigskip
This  paper is organized as follows. In Section $2$, we give a formulation of the problem. In Section $3$, we prove  the existence of a solution of equation $(\ref{eq_v})$. Finally, in Section $4$ we prove Theorem \ref{th1} and Corollary \ref{corol}. As we stated in Remark $1.8$ we don't prove Corollary \ref{cor_syme}. In the appendix, we prove various technical estimates and also the Cauchy problem for equation \eqref{eq_u}.
\section{Formulation of the problem}
A fundamental tool for the study of the asymptotic behavior of blow-up solutions is the following similarity variables framework  introduced by Giga and Kohn \cite{giga1}, \cite{giga2}, \cite{giga3}:
\begin{equation}\label{var_sim}
\displaystyle  y=\frac{x}{\sqrt{T-t}}, \;\;s=-\log(T-t)\;\;
\mbox{and}\;\;
 w(y,s)=\displaystyle (T-t)^{\frac{1}{p-1}}u(x,t),\end{equation}
where $T$ is the time where we want the solution to blow up.\\
Therefore, if $u(x,t)$ satisfies $(\ref{eq_u})$ for all $(x,t)\in \R^N\times [0, T)$, then $w(y,s)$ satisfies the following equation for all $(y,s) \in \R^N\times[-\log T, +\infty)$ :
\begin{equation}\label{eq_w}
w_{s}=\Delta w-\frac12 y.\nabla w-\frac{1}{p-1}w+|w|^{p-1}w+\mu e^{-\gamma s} |\nabla w|\int_{B(0, |y|)}|w|^{q-1},
\end{equation}
where $\gamma=\displaystyle \frac{p-q}{p-1}+\frac{N-1}{2}$.
\begin{rem}
 We would like to emphasize the fact that $\gamma>0$ from condition \eqref{hyp}, which explains the little effect of the perturbation  term for large time.
\end{rem}
The study of $u$ as $t\to T$ is equivalent to the study of the asymptotic behavior of $w$ as $s\to +\infty$.\\
We would like to find $s_0>0$ and  initial data  $w_0$ such that the solution $w$ of equation (\ref{eq_w}) with  $w(s_0)=w_0$ satisfies
$$\|w(y,s)-f(\frac{y}{\sqrt{s}})\|_{W^{1, \infty}_\beta}\to_{s\to \infty}0, $$
where $f$ is the profile defined by
\begin{equation}\label{profile}
\displaystyle f(z)=(p-1+\frac{(p-1)^2}{4p}|z|^2)^{-\frac{1}{p-1}}.
\end{equation}
In order to prove this, we will not linearize equation $(\ref{eq_w})$ around $f+\displaystyle\frac{\kappa N}{2ps}$ as in \cite{TZ} and  \cite{MZ97}, since this function is not in the space $W^{1, \infty}_\beta$. We will in fact linearize it   around
\begin{equation}\label{n_profile}
\displaystyle\varphi(y,s)=f(\frac{y}{\sqrt{s}})+\frac{\kappa N}{2ps}\chi_0(
\frac{y}{g_\eps(s)}),
\end{equation} where $\kappa=(p-1)^{-\frac{1}{p-1}}$ is a stationary solution for equation $(\ref{eq_w})$, $\chi_0\in C_0^\infty$ with $supp(\chi_0)\subset B(0, 2)$ and $\chi_0\equiv 1$ on $B(0, 1)$ and $g_\eps(s) =s^{\frac{1}{2}+\eps}$, where $\eps$ is a fixed constant satisfying $0<\eps<min(1, \frac{p-1}{4})$, (we could have taken $\eps=\frac{1}{2}\min(1, \frac{p-1}{4}))$.\\
We introduce now
\begin{equation}\label{def_v}
v(y,s)=w(y,s)-\varphi(y,s).
\end{equation}
If $w$ satisfies  equation $(\ref{eq_w})$ then $v$ satisfies the following equation
\begin{equation}\label{eq_v}
v_s=(\mathcal{L}+V)v+B(v)+R(y,s)+N(y, s),
\end{equation}
where
\begin{itemize}
\item the linear term is $(\mathcal{L}+V)v$ where 
\begin{equation}\label{linear}\mathcal{L}(v)=\Delta v -\frac12 y.\nabla v+v \; \mbox{and}\;V(y, s)=p\varphi^{p-1}-\frac{p}{p-1},
\end{equation}
\item the nonlinear term is
\begin{equation}\label{nonlinear-term}
B(v)=|v+\varphi|^{p-1}(v+\varphi)-\varphi^{p}-p\varphi^{p-1}v,
\end{equation} 
\item the rest term  involving $\varphi$ is
$$R(y,s)=\Delta \varphi -\frac12 y.\nabla \varphi -\frac{1}{p-1}\varphi+\varphi^p-\varphi_s,$$
\item and the new term is
\begin{equation}\label{new-term}
\displaystyle N(y,s)=\mu e^{-\gamma s}|\nabla v+\nabla \varphi|\int_{B(0, |y|)} |v+\varphi|^{q-1}.
\end{equation}
\end{itemize}
In comparison with the case of the equation without gradient  ($\mu=0$), all the terms in $(\ref{eq_v})$  were already present in \cite{MZ97}, \cite{TZ} and \cite{bricmont}, except the new term $N(y,s)$ which needs to be carefully studied.\\
In the following analysis, we will use the following integral form of the  equation $(\ref{eq_v})$. Let $K$ be  the fundamental solution of the operator $\mathcal{L}+V$. Then, for each $s\geq \sigma\geq s_0$,  we have
\begin{equation}\label{eq_int_v}
v(s)=K(s, \sigma) v(\sigma)+\int_\sigma^s K(s,t)(B(v(t))+R(t)+N(t))dt.
\end{equation}
Since the linear operator  $\mathcal{L}+V$ will play an important role in our analysis, we first need to  recall some  of its properties (for more details, see \cite{bricmont}).\\ The operator $\mathcal{L}$ is self-adjoint in $D(\mathcal{L})\subset L^2_\rho(\R^N)$, where $$L^2_\rho(\R^N)=\{v\in L^2_{loc}(\R^N);\quad \int_{\R^N}(v(y))^2 \rho(y) dy<\infty\}, \quad \rho(y)=\displaystyle \frac{e^{-\frac{|y|^2}{4}}}{(4\pi)^{\frac{N}{2}}}.$$
The spectrum of $\mathcal{L}$ consists only in eigenvalues  given by
\begin{equation}\label{spect}
 spec(\mathcal{L})=\{1-\frac{m}{2}; \quad m\in \N\}.
 \end{equation}
The eigenfunctions of $\mathcal{L}$ are derived from Hermite polynomials.\\
For $N=1$, all the eigenvalues are simple, and the eigenfunction corresponding to $1-\frac{m}{2}$ is
\begin{equation}\label{vect_prop}
\displaystyle h_m(y)=\sum_{k=0}^{[\frac{m}{2}]}\frac{m!}{k! (m-2k)!}(-1)^ky^{m-2k}.
\end{equation}
In particular $h_0(y)=1$, $h_1(y)=y$ and $h_2(y)=y^2-2$. Note that $h_m$ satisfies
$$\displaystyle \int_{\R} h_n h_m \rho dx=2^nn! \delta_{n,m}.$$
We also introduce $\displaystyle k_m=\frac{h_m}{\|h_m\|_{L^2_\rho(\R)}^2}$.\\
For $N\geq 2$, the eigenspace corresponding to $1-\frac{m}{2}$ is given by
$$E_m=\{h_{m_1}(y_1)\cdots h_{m_N}(y_N); \quad \quad m_1+\cdots m_N=m\}.$$
In particular,$$E_0=\{1\}, \; E_1=\{y_i;\quad  \; i=1\cdots N\}\; \mbox{ and} \;E_2=\{h_2(y_i),\;  y_iy_j;\quad   i,j=1, \cdots , N,\;  i\neq j\}.$$
The potential $V(y,s)$ has two fundamental properties:
\begin{itemize}
\item $V(., s)\to 0$ in $L^2_\rho$ as $s\to +\infty$. In particular the effect of $V$ on the bounded sets or in the "blow-up"  area $(|y|\leq K_0\sqrt{s})$ is regarded as a perturbation of the effect of $\mathcal{L}$.
\item Outside of the  "blow-up"  area, we have the following property: for all $\eps>0$ the exist $C_\eps>0$ and $s_\eps$ such that
$$\displaystyle \sup_{s\geq s_\eps , |y|\geq C_\eps \sqrt{s}}|V(y,s)-(-\frac{p}{p-1})|\leq \eps.$$
\end{itemize}
This means that $\mathcal{L}+V$ behaves like $\mathcal{L}-\frac{p}{p-1}$ in the region $|y|\geq K_0\sqrt{s}$. Because $1$ is the largest eigenvalue of $\mathcal{L}$, the operator $\mathcal{L}-\frac{p}{p-1}$ has a purely negative spectrum, which simplifies greatly the analysis in the outside of the  "blow-up"  area.\\
Since the behavior of $V$ inside and outside the  "blow-up"  area  different, we  decompose $v$ as follows.  We introduce the following cut-off function:
\begin{equation}\label{chi}
\chi(y, s) =\chi_0(\displaystyle \frac{|y|}{K_0\sqrt{s}}),
\end{equation}  where $K_0>0$ is chosen large enough so that various technical estimates hold, and the cut-off function $\chi_0$ was already introduced after \eqref{n_profile}.\\
We write $$v(y, s)= v_b(y,s)+v_e(y,s),$$
with $$v_b(y,s)=v(y,s)\chi(y,s), \quad v_e(y,s)=v(y,s)(1-\chi(y,s)).$$
We note that $supp v_b(s)\subset B(0, 2K_0\sqrt{s})$ and $supp v_e(s)\subset \R^N\backslash  B(0, K_0\sqrt{s})$.\\
In order to control $v_b$, we decompose it according  to the sign of the eigenvalues of $\mathcal{L}$ as follows (for simplicity, we give the decomposition only for $N=1$, bearing in mind that the situation for $N\ge 2$ is the same, except for some more complicated notations that can be found in Nguyen and Zaag \cite{NZ0}, where the formalism in higher dimensions is extensively given):
\begin{equation}\label{decomp}
\displaystyle v(y, s)= v_b(y,s)+v_e(y,s) =\sum_{m=0}^{2}v_m(s)h_m(y)+v_{-}(y, s)+v_e(y,s),
\end{equation}
where for $0\leq m\leq 2$, $v_m=P_m(v_b)$ and $v_{-}(s)=P_{-}(v_b)$, with $P_m$ being the $L^2_\rho$ projector on $h_m$, the eigenfunction corresponding to $\lambda=1-\frac{m}{2}$, and $P_{-}$ the projector on $\{h_i; \quad i\geq 3\}$, the negative subspace of the operator $\mathcal{L}$.

\section{Existence}
This section is devoted to the  proof of  the existence of a solution $v$ of $(\ref{eq_v})$ such that 
\begin{equation}\label{2.17}
\displaystyle \lim_{s\to +\infty}\|v(s)\|_{W^{1, \infty}_\beta}=0.
\end{equation}
For simplicity in the notations, we give the proof only when $N=1$ (see the comment right before \eqref{decomp}). In the following, we fix $\beta$ satisfying \eqref{beta}.\\
To do so, we use the framework developed in \cite{MZ97}, \cite{TZ}, \cite{NZ2}.
We proceed in two steps: Assuming some technical results, we prove in the first step  the existence of a solution $v$ of $(\ref{eq_v})$ which  converges to $0$ in $W^{1, \infty}_\beta$. The second step is devoted to the proof of the technical details.\\
In what follows, we denote by $C$ a generic positive constant, depending only on $p$, $\mu$, $\beta$ and $K_0$. Note that $C$  does not depend on $A$ and $s_0$, the constants that will appear below.
\subsection{Proof of the existence}
Let us explain briefly the general ideas of the proof.
 First, we define a shrinking set $\mathcal{V}_{\beta, K_0, A}(s)$ and translate our goal of making $v(s)$ go to $0$ in $W^{1, \infty}_\beta$ in terms of belonging to $\mathcal{V}_{\beta, K_0, A}(s)$. Reasonably, we choose the initial data such that it starts in  $\mathcal{V}_{\beta, K_0, A}(s_0)$.
 Using the spectral properties of equation $(\ref{eq_v})$, we reduce the problem
from the control of all  the five  components of $v$  (shown in \eqref{decomp}) in $\mathcal{V}_{\beta, K_0, A}(s)$ to  the control of its two first components $(v_0, v_1)$. That is, we reduce an infinite dimensional problem to a  finite dimensional one. Finally, we solve the finite dimensional problem  using index theory.
\subsubsection{Definition of a shrinking set $\mathcal{V}_{\beta, K_0, A}(s)$ and preparation of initial data}
Let first introduce the shrinking set as follows:
\begin{defi}\label{def_shrinking} (A set shrinking to zero) For all  $K_0\ge 1$, $A\geq 1$ and  $s\geq 1$, we define $\mathcal{V}_{\beta, K_0, A} (s)$ (or $\mathcal{V}_{A}(s)$ for simplicity) as the set of all functions  $g$ such that $(1+|y|^\beta)g\in L^\infty(\R^N)$  and
\begin{eqnarray}
|g_k(s)|\leq \displaystyle \frac{A}{s^2},\; k=0, 1, \;\;  |g_2(s)|\leq \displaystyle \frac{A^2 \log s}{s^2}, \;\; \|\displaystyle \frac{g_-(s)}{1+|y|^3} \|_{L^\infty}\leq  \displaystyle \frac{A}{s^2},
\end{eqnarray}
\begin{eqnarray}\label{ge}
 \|\displaystyle g_e(s) \|_{L^\infty}\leq  \displaystyle \frac{A^2}{\sqrt{s}}
, \;\;   \|\displaystyle (1+|y|^\beta) g_e(s) \|_{L^\infty}\leq  \displaystyle \frac{A^2}{s^{\frac{1-\beta}{2}}}.
\end{eqnarray}
\end{defi}
\begin{rem}\label{rema} Note from \eqref{hyp} and \eqref{beta} that $\beta <\frac{2}{p-1}<1$.
\end{rem}
Note that the shrinking set is different from all the previous studies. Indeed, it involves a new decay estimate at infinity in \eqref{ge}. Therefore, more estimates are needed.  Since $A\geq 1$, we remark that  the set  $\mathcal{V}_{\beta, K_0,  A}(s)$ is increasing (for fixed $s, \beta, K_0$) with respect to $A$ in the sense of inclusion. We also claim the following property of $\mathcal{V}_{ A}(s)$ (see Proposition \ref{prop_shrin} below for the proof):\\
For all $K_0\ge 1$, $A\geq 1$,  $\exists s_{01}(K_0, A)>0$ such that for all $s\geq s_{01}(K_0, A)$  and $g\in \mathcal{V}_{ A}(s)$, we have
\begin{equation}\label{v_e}
\|\displaystyle (1+|y|^\beta) g(s) \|_{L^\infty}\leq  \displaystyle \frac{CA^2}{s^{\frac{1-\beta}{2}}},
\end{equation}
\begin{equation}\label{v}
\|g(s) \|_{L^\infty}\leq  \displaystyle \frac{CA^2}{\sqrt{s}}.
\end{equation}
In other words, when the solution is in $\mathcal{V}_A(s)$, it is small both in the $L^\infty$ norm and in the weighted $L^\infty$ norm shown in \eqref{v} and  \eqref{v_e}. In fact, thanks to parabolic regularity, we will do more and show that the gradient of
the solution is also small in those norms (see Proposition \ref{prop-reg} below). This way, we may naturally translate our goal in \eqref{2.17} to the construction of a solution $v$ to equation \eqref{eq_v} satisfying
$$ v(s) \in {\cal V}_A(s), \; \mbox{for all}\;  s\ge s_0,$$
for some $s_0 \ge 1$.\\
The construction of a solution $v$ in $\mathcal{V}_{ A}(s)$ is based on a careful choice of the initial data at a time $s_0$. Let us consider the initial data as follows:
\begin{defi}(Choice of the initial data) For all  $K_0\ge 1$,  $A\geq 1$, $s_0\ge 1$ and $d_0, \, d_1\in \R$, we consider the following function as initial data for equation $(\ref{eq_v})$:
\begin{equation}\label{ci}
\psi_{s_0, d_0, d_1}(y)=\displaystyle \frac{A}{s_0^2}(d_0h_0(y)+d_1h_1(y))\chi(2y, s),
\end{equation}
where $h_i$, $i=0,1$ are defined in $(\ref{vect_prop})$ and $\chi$ is defined in $(\ref{chi})$.
\end{defi}

Thus, a natural question arises: Can we choose the initial data such that it starts in $\mathcal{V}_{ A}(s_0)$? For this end, we select the parameter $(d_0, d_1)$ as follows:
\begin{pro}\label{prop_ci}
(Properties of initial data) For each $K_ 0\ge 1$, $A\geq 1$, there exists $s_{02}(K_0, A)>1$ such that for all $s_0\geq s_{02}(K_0, A)$:
\begin{itemize}
\item[i)] There exists a rectangle $\mathcal{D}_{s_0}\subset[-2, 2]^2$ such that the mapping
\begin{equation}
\begin{array}{lcl}

\Phi : \R^2&\to &\R^2\\
(d_0, d_1)&\mapsto&(\psi_0, \psi_1),
\end{array}
\end{equation}
(where $\psi:=\psi_{s_0, d_0, d_1}$) is linear, one to one from $\mathcal{D}_{s_0}$ onto $[\displaystyle -\frac{A}{s_0^2},\displaystyle \frac{A}{s_0^2}]^2$ and maps  $\partial\mathcal{D}_{s_0}$ into $\partial([\displaystyle -\frac{A}{s_0^2},\displaystyle \frac{A}{s_0^2}]^2)$. Moreover, it has degree one on the boundary.\\
\item[ii)] For all $(d_0, d_1)\in \mathcal{D}_{s_0} $, $\psi \in \mathcal{V}_{ A}(s_0)$ with strict inequalities except for $(\psi_0, \psi_1)$, in the sense that
\begin{eqnarray}
\psi_e \equiv 0,\quad  |\psi_-(y)|< \frac{1}{s_0^2}(1+|y|^3),\;  \forall y\in \R,\\
|\psi_k|\leq \frac{A}{s_0^2},\;  k=0,1, \quad |\psi_2|< \displaystyle \frac{ \log s_0}{s_0^2}.
\end{eqnarray}
\item[iii)] Moreover,  for all $(d_0, d_1)\in  \mathcal{D}_{s_0}$, we have
\begin{eqnarray}\label{n_ci}
\|(1+|y|^\beta)\nabla \psi\|_{L^\infty}\leq \frac{CA}{s_0^{2-\frac{\beta}{2}}}\leq \frac{1}{s_0^{\frac{1-\beta}{2}}},\\
|\nabla \psi_-(y)| \leq \frac{1}{s_0^2}(1+|y|^3).
\end{eqnarray}
\end{itemize}
\end{pro}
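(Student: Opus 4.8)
The plan is to exploit two structural facts about $\psi:=\psi_{s_0,d_0,d_1}$: it is an explicit linear combination of the first two Hermite functions $h_0,h_1$, and it is truncated by the cut-off $\chi(2\cdot,s_0)$, which is supported in $B(0,K_0\sqrt{s_0})$ and identically $1$ on $B(0,K_0\sqrt{s_0}/2)$. Consequently every correction produced by this truncation is either exponentially small in $s_0$ — whenever it is weighed against the Gaussian $\rho$, hence for the $L^2_\rho$-projections $\psi_0,\psi_1,\psi_2,\psi_-$ — or costs merely a polynomial factor $s_0^{\beta/2}$ — for the $W^{1,\infty}_\beta$ norms — which is negligible against the $s_0^{-2}$ prefactor. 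Since $\psi$ is smooth and compactly supported, $(1+|y|^\beta)\psi$ and $(1+|y|^\beta)\nabla\psi$ are automatically in $L^\infty$, so only the quantitative bounds are at stake. Throughout, $C$ (resp.\ a small $c>0$) denotes a constant depending on $p,\mu,\beta,K_0$ but not on $A,s_0$, and I will enlarge the threshold $s_{02}(K_0,A)$ whenever needed.

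\textbf{Step 1 (the map $\Phi$, part (i)).} In dimension $1$, $h_0\equiv1$, $h_1(y)=y$, and both $\chi(2y,s_0)$ and $\rho$ are even, so $\int y\,\chi(2y,s_0)\rho\,dy=0$ by parity. Hence $\Phi$ is the \emph{diagonal} linear map $\Phi(d_0,d_1)=\big(\tfrac{A}{s_0^2}(1-\eta_0)d_0,\,\tfrac{A}{s_0^2}(1-\eta_1)d_1\big)$, where $\eta_m(s_0)=\big(\int h_m^2\rho\big)^{-1}\int h_m^2(1-\chi(2y,s_0))\rho\,dy\ge0$ is supported in $\{|y|\ge K_0\sqrt{s_0}/2\}$, hence $0\le\eta_m(s_0)\le Ce^{-cs_0}$. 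For $s_0$ so large that $\eta_m\le\tfrac12$, the rectangle $\mathcal D_{s_0}:=\prod_{m=0,1}\big[-\tfrac1{1-\eta_m},\tfrac1{1-\eta_m}\big]$ lies in $[-2,2]^2$, $\Phi$ maps it bijectively onto $[-A/s_0^2,A/s_0^2]^2$ and $\partial\mathcal D_{s_0}$ onto the boundary of that square, and since the diagonal entries $\tfrac{A}{s_0^2}(1-\eta_m)$ are positive, $\Phi$ is orientation-preserving, so its degree on the boundary is $1$. The bound $|\psi_k|\le A/s_0^2$ for $k=0,1$, with equality exactly on $\partial\mathcal D_{s_0}$, follows from $|d_k|\le(1-\eta_k)^{-1}$ on $\mathcal D_{s_0}$; this is the single non-strict inequality allowed in (ii).

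\textbf{Step 2 (the remaining components, part (ii)).} Since $\operatorname{supp}\psi\subset\overline{B(0,K_0\sqrt{s_0})}$ and $\chi(\cdot,s_0)\equiv1$ there, $\psi_b=\psi$ and $\psi_e=\psi(1-\chi(\cdot,s_0))\equiv0$. Parity kills the $d_1$-part of $\psi_2=P_2(\psi)$, and $\int(y^2-2)\chi(2y,s_0)\rho\,dy=-\int(y^2-2)(1-\chi(2y,s_0))\rho\,dy$ is exponentially small, so $|\psi_2|\le\tfrac{CA}{s_0^2}e^{-cs_0}<\tfrac{\log s_0}{s_0^2}$ for $s_0$ large. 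For $\psi_-=\psi-\psi_0 h_0-\psi_1 h_1-\psi_2 h_2$, substituting the values of $\psi_0,\psi_1$ gives
\[
\psi_-=\tfrac{A}{s_0^2}\big[(d_0+d_1 y)(\chi(2y,s_0)-1)+\eta_0 d_0+\eta_1 d_1 y\big]-\psi_2(y^2-2).
\]
The first term vanishes for $|y|<K_0\sqrt{s_0}/2$, and there $\tfrac{|d_0+d_1 y|}{1+|y|^3}\le\tfrac{C}{1+|y|^2}\le\tfrac{C}{K_0^2 s_0}$; the other two terms are handled by the exponential smallness of $\eta_m$ and $\psi_2$. Altogether $\big\|\tfrac{\psi_-}{1+|y|^3}\big\|_{L^\infty}\le\tfrac{CA}{s_0^3}$, hence $<\tfrac1{s_0^2}$ once $s_0>CA$; with $A\ge1$ this yields all the strict inequalities of (ii).

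\textbf{Step 3 (gradient bounds, part (iii)), and the main difficulty.} One computes $\nabla\psi=\tfrac{A}{s_0^2}\big(d_1\chi(2y,s_0)+(d_0+d_1 y)\partial_y[\chi(2y,s_0)]\big)$, supported in $\{|y|\le K_0\sqrt{s_0}\}$ where $1+|y|^\beta\le Cs_0^{\beta/2}$; the first summand is $\le C$, and on the annulus $|y|\sim K_0\sqrt{s_0}$ carrying $\partial_y[\chi(2y,s_0)]$ the factor $|\partial_y\chi|\le C/(K_0\sqrt{s_0})$ absorbs the growth $|d_0+d_1 y|\le CK_0\sqrt{s_0}$, so the second summand is $\le C$ as well. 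Hence $\|(1+|y|^\beta)\nabla\psi\|_{L^\infty}\le\tfrac{CA}{s_0^{2-\beta/2}}$; since $2-\tfrac{\beta}{2}-\tfrac{1-\beta}{2}=\tfrac32$, this is $\le s_0^{-(1-\beta)/2}$ once $s_0\ge(CA)^{2/3}$. Finally $\nabla\psi_-=\nabla\psi-\psi_1-2\psi_2 y$; inserting $\psi_1=\tfrac{Ad_1}{s_0^2}(1-\eta_1)$, each resulting term divided by $1+|y|^3$ is $\le\tfrac{CA}{s_0^2}(s_0^{-3/2}+e^{-cs_0})<\tfrac1{s_0^2}$ for $s_0$ large, again because the $\chi-1$ and $\partial_y\chi$ factors localize to $|y|\gtrsim\sqrt{s_0}$. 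Taking $s_{02}(K_0,A)$ above all the thresholds met above finishes the proof. There is no conceptual obstacle: the one point requiring genuine care is that (ii)--(iii) demand the \emph{sharp} constant $1$ (not $A$) in front of $(1+|y|^3)s_0^{-2}$ for $\psi_-$ and $\nabla\psi_-$, and the passage $\tfrac{CA}{s_0^{2-\beta/2}}\le s_0^{-(1-\beta)/2}$; both force one to spend the extra powers of $s_0$ produced by the support localization of the cut-off in order to swallow the factor $A$.
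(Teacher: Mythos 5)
Your proposal is correct and follows essentially the same route as the paper: the weighted gradient bound in (iii) is obtained exactly as in the paper's own proof (explicit computation of $\nabla\psi$, support in $\{|y|\lesssim K_0\sqrt{s_0}\}$, boundedness of $z\chi_0'(z)$, then absorbing $CA$ into $s_0^{3/2}$), while your Steps 1--2 simply write out the standard projection computation (parity plus exponentially small cut-off corrections) that the paper delegates to Tayachi--Zaag, noting as the paper does that $\psi_e\equiv 0$ makes the new outer condition trivial. No gaps.
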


The proof of the previous proposition follows exactly as in \cite{TZ} except for $(\ref{n_ci})$ (we insist  on the fact that the proof of \cite{TZ} holds here, even though the cut-off between $v_b$ and $v_e$ in \cite{TZ} holds at $y\sim s^{\frac{p+1}{2(p-1)}}$  instead of $y \sim \sqrt s$ here).  Indeed, the new condition  \eqref{ge} we have in the  shrinking set has no influence,  since  it involves $\psi_e$ and $\psi_e\equiv 0$ by construction in $(\ref{ci})$. That is the reason why the proof  is omitted except for  $(\ref{n_ci})$. The interested reader can find details in pages $5915-5918$ of \cite{TZ}. Thus, we only prove $(\ref{n_ci})$ below in section 3.2.1 page \pageref{sect}.\\

The following proposition is crucial in the proof of the existence of the blow-up solution. We reduce the problem to a finite dimensional problem. As in \cite{MZ}, \cite{EZ} and  \cite{TZ}, we prove that it is enough to control the $2$ components  $(v_0(s), v_1(s))\in [\displaystyle -\frac{A}{s^2},\displaystyle \frac{A}{s^2}]^2$ in order to control the solution $v(s)$ in $\mathcal{V}_{ A}(s) $, solution which is infinite dimensional.
\begin{pro}\label{prop_red} There exists $K_3\ge 1$ such that for any $K_0\ge K_3$, there exists $A_3(K_0) \geq 1$ such that for each
$A \geq A_3$, there exists $s_{03}(K_0, A)\in\R$ such that for all $s_0\geq s_{03}(K_0, A)$, the following holds:\\
If $v$ is a solution of $(\ref{eq_v})$ with initial data at $s = s_0$ given by $(\ref{ci})$ with $(d_0, d_1) \in  \mathcal{D}_{s_0}$, and
$v(s)\in \mathcal{V}_{ A}(s)$ for all $s \in  [s_0, s_1]$, with $v(s_1) \in  \partial \mathcal{V}_{ A}(s_1)$ for some $s_1 \geq s_0$, then:
\begin{itemize}
\item[i)](Reduction to a finite dimensional problem) We have:
$$(v_0(s_1), v_1(s_1))\in \partial([\displaystyle -\frac{A}{s_1^2},\displaystyle \frac{A}{s_1^2}]^2).$$
\item[ii)](Transverse crossing) There exist $m \in \{0, 1\}$ and $\omega\in \{-1, 1\}$ such that
$$\omega v_m(s_1) =\displaystyle \frac{A}{s_1^2}\quad \mbox{and}\quad
\omega v'_m(s_1) > 0.$$
\end{itemize}
\end{pro}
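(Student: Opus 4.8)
The plan is to follow the by-now classical strategy of Bricmont--Kupiainen and Merle--Zaag, adapted to the new shrinking set $\mathcal{V}_{\beta,K_0,A}(s)$ and the weighted space $W^{1,\infty}_\beta$. The key point is that if $v(s_1)\in\partial\mathcal{V}_A(s_1)$ then \emph{one} of the defining inequalities must be saturated; I would show that all inequalities except the ones on $(v_0,v_1)$ cannot be the first to be saturated, because their dynamics force them to stay strictly inside when they are near their bounds. Concretely, I would write the integral equation \eqref{eq_int_v}, project it onto the five components $v_0,v_1,v_2,v_-,v_e$ using the spectral decomposition \eqref{decomp}, and for each component derive a differential inequality of the form $v_m' \le (1-\tfrac m2) v_m + (\text{small})$ (for the eigenmodes) or an integral/$L^\infty$ estimate for $v_-$ and $v_e$. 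The smallness of the error terms comes from three facts: the potential $V(\cdot,s)\to0$ in $L^2_\rho$ on the blow-up region and behaves like $-\frac p{p-1}$ outside it (so $\mathcal{L}+V$ is essentially $\mathcal{L}-\frac p{p-1}$ there, with negative spectrum); the quadratic term $B(v)$ is controlled by $\|v\|^2$ which is $O(A^4/s)$ on $\mathcal{V}_A(s)$ by \eqref{v}; the rest term $R(y,s)$ is $O(1/s^2)$ by the choice of profile \eqref{n_profile}; and, crucially, the new non-local term $N(y,s)$ carries the exponentially decaying factor $\mu e^{-\gamma s}$ with $\gamma>0$, whose contribution is therefore negligible once its $W^{1,\infty}_\beta$-norm is bounded using that $v+\varphi$ and $\nabla v+\nabla\varphi$ are controlled on $\mathcal{V}_A(s)$ together with the parabolic regularity estimate of Proposition \ref{prop-reg}. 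Here one must be careful that the integral $\int_{B(0,|y|)}|v+\varphi|^{q-1}$ only grows polynomially in $|y|$, which is why the condition $\frac{N}{q-1}<\beta$ in \eqref{beta} is used, so that the product with the cut-off profile stays in the weighted space.

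For part (i): suppose $v(s)\in\mathcal{V}_A(s)$ on $[s_0,s_1]$ with $v(s_1)\in\partial\mathcal{V}_A(s_1)$ but $(v_0(s_1),v_1(s_1))$ in the \emph{interior} of $[-A/s_1^2,A/s_1^2]^2$. Then one of the remaining four inequalities is an equality at $s_1$. I would rule each out in turn. For $v_2$: since $\lambda=0$ for $h_2$, the modulation-type equation for $v_2$ reads $v_2'=O(v_2/s)+O(1/s^2)$ (the $1/s$ coefficient coming from $V$, which on $E_2$ produces exactly a $-\frac{c}{s}$ term by the expansion of $\varphi$); integrating this together with $|v_2(s)|\le A^2\log s/s^2$ shows $|v_2(s_1)|< A^2\log s_1/s_1^2$ for $A$ large, contradiction. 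For $v_-$: using the kernel estimates for $e^{\theta\mathcal{L}}$ restricted to the negative subspace (decay like $e^{-3\theta/2}$ on $\{h_i,i\ge3\}$) and the pointwise bounds on $B(v)$, $R$, $N$ divided by $1+|y|^3$, a Duhamel/Gronwall argument gives $\|v_-(s_1)/(1+|y|^3)\|_\infty< A/s_1^2$. For $v_e$: in the outer region $V\approx-\frac p{p-1}$, so the semigroup contracts by a factor $e^{-\frac p{p-1}\theta}$; combined with the loss coming from the cut-off $\chi$ and the bound \eqref{v_e}, this yields the strict inequalities in \eqref{ge}. This is where the \emph{new} estimate in \eqref{ge} requires genuinely new work — I would need a pointwise estimate on $K(s,t)$ acting on functions with the weight $(1+|y|^\beta)^{-1}$ at infinity, proving that the weight is essentially preserved while the $L^\infty$ size decays; the main obstacle of the whole proof lies here, since the interplay between the polynomially-growing non-local integral, the weight $1+|y|^\beta$, and the gradient term must be estimated simultaneously, relying on Proposition \ref{prop-reg} to convert $L^\infty$ control of $v$ into $L^\infty$ control of $\nabla v$.

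For part (ii) (transverse crossing): by part (i) we have $(v_0(s_1),v_1(s_1))\in\partial([-A/s_1^2,A/s_1^2]^2)$, so there is $m\in\{0,1\}$ and $\omega\in\{\pm1\}$ with $\omega v_m(s_1)=A/s_1^2$. The ODE for $v_m$ is $v_m'=(1-\tfrac m2)v_m+E_m(s)$ with $|E_m(s)|\le C/s^2$ (again $B$, $R$, $N$ are all smaller than this by the estimates above; the dominant eigenvalue structure is what makes this mode \emph{expanding}). Since $1-\tfrac m2\ge\tfrac12$ for $m\in\{0,1\}$, at $s=s_1$ we get $\omega v_m'(s_1)\ge(1-\tfrac m2)\frac{A}{s_1^2}-\frac{C}{s_1^2}>0$ for $A$ chosen large enough relative to $C$ (which is independent of $A$ and $s_0$). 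Choosing $s_0$ large then makes every "$O(1/s^2)$ error has a constant smaller than a fixed fraction of $A/s^2$" statement rigorous. Assembling the threshold requirements on $K_0$, then $A$, then $s_0$ in that order gives the constants $K_3$, $A_3(K_0)$, $s_{03}(K_0,A)$ in the statement. The only real novelty compared with \cite{MZ97,TZ} is the control of $v_e$ in the weighted norm and the handling of $N(y,s)$, both of which are quantitatively easy thanks to the factor $e^{-\gamma s}$ but require the parabolic-regularity input of Proposition \ref{prop-reg} to bound $\nabla v$ in $W^{1,\infty}_\beta$.
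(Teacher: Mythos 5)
Your overall scheme is the paper's: project the Duhamel form of \eqref{eq_v} on the decomposition \eqref{decomp}, show that at $s_1$ every component except $(v_0,v_1)$ satisfies a \emph{strict} inequality (part i)), and deduce part ii) from the ODE $|v_m'-(1-\frac m2)v_m|\le C/s^2$ with $C$ independent of $A$ (your part ii) is essentially the paper's). However, two quantitative steps in your part i) would fail as written. First, for the null mode you allow an error $O(1/s^2)$ in the equation for $v_2$. That precision is not sufficient: with $v_2'=-\frac2s v_2+O(1/s^2)$ one only gets $(s^2v_2)'=O(1)$, hence $|v_2(s)|\lesssim (s-s_0)/s^2$, which does not remain below $A^2\log s/s^2$ once $s-s_0\gg \log s$. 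What is needed (and what the paper proves in Proposition \ref{pro_red}, item ii)) is $|v_2'+\frac2s v_2|\le C/s^3$, which integrates to $s^2|v_2(s)|\le \log s_0+C\log s$ and gives the strict inequality for $A^2>C+1$.

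Second, and more centrally, for $v_-$ and for $v_e$ (in both norms of \eqref{ge}) you invoke ``a Duhamel/Gronwall argument'' apparently started at $s_0$. But the kernel estimates you would use (the paper's Lemma \ref{lem_14} and Lemma \ref{lem15}) inevitably carry growth factors $e^{s-\sigma}$ or $e^{\frac\beta2(s-\sigma)}$ and source contributions of size $(1+s-\sigma)/s^2$ or $(1+(s-\sigma)e^{\frac\beta2(s-\sigma)})/s^{\frac{1-\beta}2}$, so integrating from $s_0$ cannot give the strict bounds $A/s_1^2$, $A^2/\sqrt{s_1}$, $A^2/s_1^{\frac{1-\beta}2}$ uniformly in $s_1$. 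The missing idea is the intermediate-time device the paper uses: apply the linear estimates on a window $[\sigma,\sigma+\rho]$ with $\sigma=s_1-\rho$ and $\rho=\log(A^\alpha)$, $\alpha<1$ (treating $s_1\le s_0+\rho$ separately, where $\psi_e\equiv0$ and the strict inequalities of Proposition \ref{prop_ci} are used). Then the initial-data contributions enter with factors like $Ae^{-\rho/2}$ or $A^2e^{-\frac\rho2(\frac1{p-1}-\frac\beta2)}$, i.e.\ as powers of $A$ strictly below the target, while the $e^{C(s-\sigma)}$ factors become powers $A^{C\alpha}$ with exponent below $2$, and taking $A$ large closes the argument; without some such choice of $\sigma$ your claimed strict inequalities for $v_-$ and $v_e$ do not follow. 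Two smaller slips, harmless for the scheme but worth correcting: the decay on the negative subspace is $e^{-\theta/2}$ (first eigenvalue $-\frac12$), not $e^{-3\theta/2}$; and outside the blow-up region $\mathcal L+V\approx\mathcal L-\frac p{p-1}$ contracts like $e^{-\theta/(p-1)}$ (the paper uses $e^{-\theta/p}$, and $e^{-\frac\theta2(\frac1{p-1}-\frac\beta2)}$ in the weighted norm), not $e^{-\frac p{p-1}\theta}$. Your identification of the weighted outer estimate and of the exponentially small term $N$ as the genuinely new ingredients, handled through a kernel estimate with the weight $(1+|y|^\beta)^{-1}$ and Proposition \ref{prop-reg}, does match the paper (its Lemma \ref{lem_kernel2}, part 4, and Lemma \ref{lem_14}, part 3).
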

We give the proof of Proposition \ref{prop_red} in subsection \ref{sub_red}.\\
We remark by $(\ref{v_e})$ that  if a solution $v$ stays in $\mathcal{V}_{ A}(s)$, for $s\geq s_0$, then $\displaystyle (1+|y|^\beta) v(s)$ goes to $0$ in $L^\infty$.
As mentioned above, our goal is to get   the convergence in $W^{1, \infty}_\beta$. Therefore, it remains to show that $\|(1+|y|^\beta)\nabla v\|_{L^\infty}\to_{s\to \infty}0$. Thus,  we need the following parabolic regularity estimate for equation $(\ref{eq_v})$:
\begin{pro}\label{prop-reg}(Parabolic regularity in $\mathcal{V}_{ A}(s)$)\\
For all $K_0\ge 1$, $A\geq 1$, there exists  $s_{04}(K_0, A)$ such that for all $s_0\geq s_{04}(K_0, A)$, if $v$ is   the solution of $(\ref{eq_v})$  for all $s\in [s_0, s_1]$, $s_0\leq s_1$, with  $v(s)\in \mathcal{V}_{ A}(s)$ and  initial data at $s_0$, given in $(\ref{ci})$ with  $(d_0, d_1)\in \mathcal{D}_{s_0}$ defined in Proposition \ref{prop_ci}, then, for all $s\in[s_0, s_1]$, we have
\begin{eqnarray}\label{reg}
\displaystyle  \| \nabla v(s) \|_{L^\infty}\leq   \frac{CA^2}{\sqrt{s}} \quad \mbox{and}\quad \| (1+|y|^\beta)\nabla v(s) \|_{L^\infty}\leq  \displaystyle \frac{CA^2}{s^{\frac{1-\beta}{2}}}.
\end{eqnarray}
\end{pro}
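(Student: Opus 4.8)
\emph{Sketch of the argument.} The plan is to bootstrap from the $L^\infty$ and weighted‑$L^\infty$ control of $v$ already encoded in $v(s)\in\mathcal V_A(s)$ (estimates \eqref{v}--\eqref{v_e}) to the same control for $\nabla v$, using the Duhamel formulation of \eqref{eq_v}. It is convenient to rewrite \eqref{eq_v} with the \emph{contractive} free operator $\mathcal L-\frac p{p-1}$ (its potential $1-\frac p{p-1}=-\frac1{p-1}$ being negative), absorbing the resulting gain into the source:
\begin{equation*}
v_s=\Bigl(\mathcal L-\tfrac p{p-1}\Bigr)v+p\varphi^{p-1}v+B(v)+R+N ,
\end{equation*}
so that $v(s)=e^{(s-s_0)(\mathcal L-\frac p{p-1})}v(s_0)+\int_{s_0}^s e^{(s-\tau)(\mathcal L-\frac p{p-1})}\bigl[p\varphi^{p-1}v+B(v)+R+N\bigr](\tau)\,d\tau$. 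The key technical input, which I would isolate as a separate lemma (the $\beta$‑weighted analogue of the smoothing estimates used in \cite{EZ,TZ}), is that since the kernel of $e^{\theta\mathcal L}$ is an explicit Gaussian mapping $y\mapsto ye^{-\theta/2}$ while convolving at scale $\sqrt{1-e^{-\theta}}\le1$, one has, using $0<\beta<1$ (Remark \ref{rema}), for some $c>0$ and all $\theta>0$,
\begin{equation*}
\bigl\|\nabla e^{\theta(\mathcal L-\frac p{p-1})}g\bigr\|_{L^\infty}\le\frac{Ce^{-c\theta}}{\sqrt{1-e^{-\theta}}}\|g\|_{L^\infty},\qquad
\bigl\|(1+|y|^\beta)\nabla e^{\theta(\mathcal L-\frac p{p-1})}g\bigr\|_{L^\infty}\le\frac{Ce^{-c\theta}}{\sqrt{1-e^{-\theta}}}\bigl\|(1+|y|^\beta)g\bigr\|_{L^\infty},
\end{equation*}
and, for the initial‑data term, the commutation $\partial_i e^{\theta(\mathcal L-\frac p{p-1})}=e^{-\theta/2}e^{\theta(\mathcal L-\frac p{p-1})}\partial_i$. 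Since $\theta\mapsto e^{-c\theta}/\sqrt{1-e^{-\theta}}$ is integrable on $(0,\infty)$, the Duhamel estimates close. I then run a continuity argument on
\begin{equation*}
\eta(s)=\sup_{\sigma\in[s_0,s]}\sigma^{\frac{1-\beta}2}\bigl\|(1+|y|^\beta)\nabla v(\sigma)\bigr\|_{L^\infty},
\end{equation*}
which is finite and continuous by the local Cauchy theory in $W^{1,\infty}_\beta$ (Appendix C) and satisfies $\eta(s_0)=s_0^{\frac{1-\beta}2}\|(1+|y|^\beta)\nabla\psi\|_{L^\infty}\le1$ by \eqref{n_ci}; the goal is $\eta(s)\le CA^2$ on $[s_0,s_1]$.

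Applying $\nabla$ to the Duhamel formula and invoking the smoothing lemma, the five contributions are handled as follows. The initial‑data term, after commuting $\nabla$ through the semigroup and using \eqref{n_ci}, contributes $\lesssim A\,s_0^{-3/2}$ to $\eta$, negligible. The linear term uses $\|\varphi\|_{L^\infty}\le C$ and \eqref{v_e}, giving $\|(1+|y|^\beta)p\varphi^{p-1}v(\tau)\|_{L^\infty}\le CA^2\tau^{-\frac{1-\beta}2}$ and hence, after integration against the $L^1$ kernel, the leading term $C_1A^2\sigma^{-\frac{1-\beta}2}$. The nonlinear term is quadratic, $|B(v)|\le C|v|^2$ (using $p>3$ and the smallness of $v$ from \eqref{v}), so $\|(1+|y|^\beta)B(v(\tau))\|_{L^\infty}\le C\|v(\tau)\|_{L^\infty}\|(1+|y|^\beta)v(\tau)\|_{L^\infty}\le CA^4\tau^{-\frac12}\tau^{-\frac{1-\beta}2}$, whose contribution is $\lesssim A^4 s_0^{-1/2}\sigma^{-\frac{1-\beta}2}$, absorbed for $s_0$ large relative to $A$. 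The term $R$ is explicit; from the choice of $\varphi$ (as in \cite{MZ97,TZ}), and using $\beta<\frac2{p-1}$ for the profile tail, $\|(1+|y|^\beta)R(\tau)\|_{L^\infty}\le C\tau^{\frac\beta2-1}\le C\tau^{-\frac{1-\beta}2}$, so its contribution to $\eta$ is $\le C\le C_1A^2$, harmless.

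The genuine difficulty is the new term $N(y,\tau)=\mu e^{-\gamma\tau}|\nabla v+\nabla\varphi|\int_{B(0,|y|)}|v+\varphi|^{q-1}$, which feeds the gradient back into the estimate and carries the non‑local integral $I(y,\tau):=\int_{B(0,|y|)}|v+\varphi|^{q-1}$. (When $\mu=0$ this term is absent and nothing below is needed.) For the gradient factor I use the bootstrap hypothesis together with $\|(1+|y|^\beta)\nabla\varphi(\tau)\|_{L^\infty}\le C\tau^{-\frac{1-\beta}2}$, so that $\|(1+|y|^\beta)(\nabla v+\nabla\varphi)(\tau)\|_{L^\infty}\le(\eta(s)+C)\tau^{-\frac{1-\beta}2}$. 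For $I(y,\tau)$, I use $q>2$ (which follows from \eqref{hyp}) to bound $|v+\varphi|^{q-1}\le C(|v|^{q-1}+|\varphi|^{q-1})$ and then estimate each piece \emph{uniformly in $y$}: for the $\varphi$‑part, the change of variable $z=\sqrt\tau\,\zeta$ and the integrability $\int_{\R^N}f(\zeta)^{q-1}\,d\zeta<\infty$ --- which is exactly where the condition $q>\frac N2(p-1)+1$ of \eqref{hyp} enters --- give $\int_{B(0,|y|)}|\varphi|^{q-1}\le C\tau^{N/2}$; for the $v$‑part I split the ball of radius $|y|$ at $|z|=1$, using the $L^\infty$ bound \eqref{v} on $\{|z|\le1\}$ --- the weighted bound is \emph{not} usable there, since $\beta(q-1)>N$ --- and the weighted bound \eqref{v_e} on $\{1\le|z|\le|y|\}$, where $\beta(q-1)>N$ --- precisely the lower constraint on $\beta$ in \eqref{beta} --- makes $\int_1^\infty r^{N-1-\beta(q-1)}\,dr<\infty$, whence $\int_{B(0,|y|)}|v|^{q-1}\le C(A^2)^{q-1}$. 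Thus $\|I(\cdot,\tau)\|_{L^\infty}\le C(1+A^2)^{q-1}\tau^{N/2}$, and since $\gamma>0$ the polynomial growth is irrelevant:
\begin{equation*}
\bigl\|(1+|y|^\beta)N(\tau)\bigr\|_{L^\infty}\le C(1+A^2)^{q-1}\bigl(\eta(s)+C\bigr)e^{-\gamma\tau}\tau^{N/2}\tau^{-\frac{1-\beta}2}\le C(1+A^2)^{q-1}\bigl(\eta(s)+C\bigr)e^{-\gamma\tau/2}\tau^{-\frac{1-\beta}2} .
\end{equation*}
Integrating against the $L^1$ kernel, the contribution of $N$ to $\sigma^{\frac{1-\beta}2}\|(1+|y|^\beta)\nabla v(\sigma)\|_{L^\infty}$ is $\le C(1+A^2)^{q-1}(\eta(s)+C)\,\sigma^{\frac{1-\beta}2}e^{-c'\sigma}$ for some $c'>0$, hence its contribution to $\eta(s)$ is $\le C(1+A^2)^{q-1}(\eta(s)+C)\,s_0^{\frac{1-\beta}2}e^{-c's_0}$, which $\to0$ for fixed $A$ as $s_0\to\infty$. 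Collecting the five pieces gives $\eta(s)\le C_1A^2+\varepsilon(s_0)\,(\eta(s)+A^2)$ with $\varepsilon(s_0)\to0$; choosing $s_0\ge s_{04}(K_0,A)$ large then closes the bootstrap, $\eta(s)\le 2C_1A^2=CA^2$ on $[s_0,s_1]$, which is the weighted estimate in \eqref{reg}. The unweighted estimate $\|\nabla v(s)\|_{L^\infty}\le CA^2/\sqrt s$ is obtained identically, simply dropping the weight $(1+|y|^\beta)$ from every step. The only real obstacle is thus entirely concentrated in the term $N$: bounding $I(y,\tau)$ uniformly in $y$ forces the near/far dichotomy about the origin and the simultaneous use of $\beta(q-1)>N$ from \eqref{beta} and $q>\frac N2(p-1)+1$ from \eqref{hyp}, while the gradient feedback must be kept inside the bootstrap quantity $\eta$ --- after which the exponential factor $e^{-\gamma\tau}$ (with $\gamma>0$) renders $N$ entirely negligible.
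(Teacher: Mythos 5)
Your proof is correct in substance, but it takes a genuinely different route from the paper. The paper keeps the full potential in the source ($F=Vv+B(v)+R+N$) and works with the \emph{unshifted} semigroup $e^{\theta\mathcal L}$, whose weighted smoothing bounds (Lemma \ref{lem_kernel1}, part 5, proved in Appendix A) grow like $e^{(\beta+1)\theta/2}/\sqrt{1-e^{-\theta}}$; consequently it is forced to argue on time windows of length at most one (Case 1: $s\le s_0+1$, starting from the initial data \eqref{n_ci}; Case 2: $s>s_0+1$, Duhamel from $s-1$ using only $\|(1+|y|^\beta)v(s-1)\|_{L^\infty}\le CA^2(s-1)^{-\frac{1-\beta}2}$ from $\mathcal V_A$ membership), and it closes the gradient feedback coming from $N$ by a Gronwall argument, reusing the already-proved memberships $B(v),R\in\mathcal V_C$, $Vv\in\mathcal V_{CA}$ and the bound \eqref{I} on the non-local factor. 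You instead shift to the dissipative operator $\mathcal L-\frac p{p-1}$, so that the same weighted-gradient smoothing estimate acquires a factor $e^{((\beta+1)/2-\frac p{p-1})\theta}$ with negative exponent (here $\beta<1$ is essential, exactly as in the paper's Lemma \ref{lem_kernel1} part 5), the kernel becomes integrable on $(0,\infty)$, and a single global-in-time Duhamel from $s_0$ plus an absorption/bootstrap on $\eta(s)$ closes the argument, the $N$-feedback being negligible thanks to $e^{-\gamma\tau}$. Your treatment of $N$ (uniform bound on $\int_{B(0,|y|)}|v+\varphi|^{q-1}$ via $\beta(q-1)>N$ and $q>\frac N2(p-1)+1$, polynomial growth killed by $e^{-\gamma\tau}$) is the same as the paper's Lemma \ref{lem_new_term}/estimate \eqref{I}. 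What each approach buys: yours avoids the two-case split and Gronwall at the price of asserting the shifted smoothing lemma (which indeed follows from the paper's Appendix A by multiplying by $e^{-\frac p{p-1}\theta}$) and of re-deriving the weighted source bounds directly; the paper's unit-window argument needs no spectral shift and recycles the $\mathcal V_C$ estimates wholesale. Two harmless imprecisions in your write-up: the bound $\|(1+|y|^\beta)R(\tau)\|_{L^\infty}\le C\tau^{\frac\beta2-1}$ should carry an extra $\tau^{\beta\eps}$ from the cut-off part of $\varphi$ (supported in $|y|\lesssim\tau^{\frac12+\eps}$), but since $\eps<\frac1{2\beta}$ the sufficient bound $C\tau^{-\frac{1-\beta}2}$ still holds; and the remark that the weighted bound is ``not usable'' on $\{|z|\le1\}$ is unnecessary, since the weight is $(1+|z|^\beta)^{-(q-1)}\le1$ there (the paper simply integrates $\int_{\R^N}(1+|y'|^{\beta(q-1)})^{-1}dy'$ in \eqref{I1}).
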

The proof of the previous proposition is postponed to subsection \ref{sub_reg}.
\subsubsection{Proof of the existence of a solution  in $\mathcal{V}_{ A}(s)$}
We are going to prove  the following existence result  using the previous subsections.
\begin{pro}\label{exis}
There exists $K_5\ge 1$ such that  for all $K_0\ge K_5$, there exists $A_5(K_0)\geq 1$ such that for all $A\geq A_5(K_0)$ there exists $s_{05}(K_0,A)$ such that for all $s_0\geq s_{05}(K_0, A)$, there exists $(d_0, d_1)$ such that if $v$ is the solution of $(\ref{eq_v})$ with initial data at $s_0$, given in $(\ref{ci})$, then $v(s)\in \mathcal{V}_{ A}(s)$, for all $s\geq s_0$.
\end{pro}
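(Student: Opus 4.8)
The plan is to run the classical topological ``shooting'' argument over the two-parameter family of initial data $\psi_{s_0,d_0,d_1}$ from \eqref{ci}, taking Propositions \ref{prop_ci}, \ref{prop_red} and \ref{prop-reg} as black boxes. First I would fix the constants: choose $K_5\ge K_3$, then $A_5(K_0)$ larger than $A_3(K_0)$ and the thresholds coming from Propositions \ref{prop_ci} and \ref{prop-reg}, and finally $s_{05}(K_0,A)$ larger than all the $s_{0i}(K_0,A)$ appearing in those statements; then fix $K_0\ge K_5$, $A\ge A_5(K_0)$, $s_0\ge s_{05}(K_0,A)$. For each $(d_0,d_1)\in\mathcal{D}_{s_0}$, the Cauchy theory in $W^{1,\infty}_\beta$ (Appendix C) together with Proposition \ref{prop-reg} (which keeps $\nabla v$ bounded in the weighted norm as long as $v$ stays in $\mathcal{V}_A(\cdot)$, so that no blow-up of the $W^{1,\infty}_\beta$ norm can occur while $v\in\mathcal V_A$) guarantees a solution of \eqref{eq_v} defined on a maximal interval on which it remains in the shrinking set; by Proposition \ref{prop_ci}(ii)--(iii), $\psi_{s_0,d_0,d_1}\in\mathcal{V}_A(s_0)$, so we may set
$$ s^*(d_0,d_1)=\sup\{\, s\ge s_0 \;:\; v(\sigma)\in\mathcal{V}_A(\sigma)\ \text{for all}\ \sigma\in[s_0,s]\,\}\in[s_0,+\infty].$$
The goal is to produce $(d_0,d_1)$ for which $s^*=+\infty$, which is exactly the conclusion of the proposition. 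I would argue by contradiction, assuming $s^*(d_0,d_1)<+\infty$ for every $(d_0,d_1)\in\mathcal{D}_{s_0}$.

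If $s^*<+\infty$, then by continuity $v(s^*)\in\partial\mathcal{V}_A(s^*)$, and Proposition \ref{prop_red}(i) forces the exit to occur through the first two components only, i.e. $(v_0(s^*),v_1(s^*))\in\partial([-A/s^{*2},A/s^{*2}]^2)$. Hence the map
$$ \Upsilon:\mathcal{D}_{s_0}\longrightarrow\partial\big([-1,1]^2\big),\qquad (d_0,d_1)\longmapsto \frac{(s^*(d_0,d_1))^2}{A}\big(v_0(s^*),v_1(s^*)\big)$$
is well defined. Proposition \ref{prop_red}(ii) (transverse crossing) says that at $s=s^*$ the pair $(v_0,v_1)$ leaves the square $[-A/s^2,A/s^2]^2$ with nonzero speed; combined with continuity of the flow with respect to initial data in a norm controlling the five components of the decomposition \eqref{decomp}, this shows that $s^*(\cdot)$, and therefore $\Upsilon$, is continuous on $\mathcal{D}_{s_0}$. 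Moreover, on $\partial\mathcal{D}_{s_0}$ the data already lies on $\partial\mathcal{V}_A(s_0)$ through its $(\psi_0,\psi_1)$ components only (Proposition \ref{prop_ci}(ii)), so $s^*=s_0$ there and $\Upsilon|_{\partial\mathcal{D}_{s_0}}=\tfrac{s_0^2}{A}\,\Phi$, which by Proposition \ref{prop_ci}(i) is a homeomorphism of $\partial\mathcal{D}_{s_0}$ onto $\partial([-1,1]^2)$ of degree one. Thus $\Upsilon$ would be a continuous retraction of the rectangle $\mathcal{D}_{s_0}$ onto its boundary restricting to a degree-one map there — impossible by the no-retraction theorem (index theory / Brouwer). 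This contradiction yields some $(d_0,d_1)\in\mathcal{D}_{s_0}$ with $s^*(d_0,d_1)=+\infty$, i.e. $v(s)\in\mathcal{V}_A(s)$ for all $s\ge s_0$, which is the claim.

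I expect the delicate point to be the continuity of $s^*$ and of $\Upsilon$ on $\mathcal{D}_{s_0}$: one has to justify that the solution map $(d_0,d_1)\mapsto v(\cdot)$ is continuous in a topology strong enough to follow all of $v_0,v_1,v_2,v_-,v_e$ (in particular the weighted $L^\infty$ quantities in \eqref{ge}), which is where the Cauchy theory of Appendix C and the parabolic regularity of Proposition \ref{prop-reg} are genuinely needed, and then to combine this with the strict transversality of Proposition \ref{prop_red}(ii) to rule out tangential or non-generic exits. Everything specific to the present equation — the non-local term $N(y,s)$ and the weighted space $W^{1,\infty}_\beta$ — has by this stage been absorbed into Propositions \ref{prop_red} and \ref{prop-reg}, so no further analysis of \eqref{eq_v} is required here and the argument is purely topological, exactly as in \cite{MZ97} and \cite{TZ}.
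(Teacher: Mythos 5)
Your proposal is correct and follows essentially the same route as the paper: start from the prepared data of Proposition \ref{prop_ci}, define the maximal exit time from $\mathcal{V}_A(s)$, use Proposition \ref{prop_red} (i) to force the exit through $(v_0,v_1)$ and (ii) to get continuity of the exit time and map, and conclude by a degree-one/no-retraction contradiction on $\mathcal{D}_{s_0}$. The only cosmetic difference is bookkeeping of the constants ($K_5$, $A_5$, $s_{05}$ as maxima of the earlier thresholds), which matches the paper's choice $s_0\ge\max(s_{01},s_{02},s_{03})$.
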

\begin{proof}
Let us consider $K_0\ge K_3$,  $A\ge A_3(K_0)$, then   fix  $s_0 \geq max(s_{01},s_{02}, s_{03})$ and  $(d_0, d_1)\in  \mathcal{D}_{s_0}$, where the various constants are introduced in Section 3.1.1. This means that we can apply all the statements
in that section. The problem $(\ref{eq_v})$ with initial data at $s=s_0$, $\psi_{s_0, d_0, d_1}$ given in $(\ref{ci})$ has a solution $v_{d_0, d_1}(s)$ or  $v(s)$ for simplicity. Indeed, using a fixed point argument, we prove the wellposedness for equation $(\ref{eq_u})$ in
$W_\beta^{1,\infty}(\R^N)$ (we leave the proof to Appendix C).\\
According to Proposition  \ref{prop_ci}, for each $(d_0, d_1)\in  \mathcal{D}_{s_0}$,  $\psi_{s_0, d_0, d_1}\in \mathcal{V}_{ A}(s_0)\subset \mathcal{V}_{A+1}(s_0)$ and from the existence theory, starting  in $\mathcal{V}_{ A}(s_0)$, the solution $v(s)$ stays
in $\mathcal{V}_{ A}(s)$ until some maximal time $s_* = s_*(d_0, d_1)$. We  proceed  by contradiction and assume  that $s_*(d_0, d_1)<\infty$ for any
$(d_0, d_1) \in  \mathcal{D}_{s_0}$. By  definition of $s_*$, the solution at time  $s_*$ is on the boundary
of $\mathcal{V}_{ A}(s_*)$ and $v(s)\in \mathcal{V}_{ A}(s)$, for all $s\in [s_0, s_*]$.\\By Proposition \ref{prop_red}, we see  that $v(s_*)$ can leave $\mathcal{V}_{ A}(s_*) $ only by its first components, in other words, $(v_0(s_*), v_1(s_*))\in \partial([\displaystyle -\frac{A}{s_*^2},\frac{A}{s_*^2}]^2)$ and the following function is well defined
\begin{equation*}
\begin{array}{lcl}

\Phi :  \mathcal{D}_{s_0} &\to &\partial([-1, 1]^2)\\
(d_0, d_1)&\mapsto& \displaystyle \frac{s_*^2}{A}(v_0, v_1)(s_*).
\end{array}
\end{equation*}
Using the transversality property of $(v_0, v_1)$ given in Proposition $\ref{prop_red}$ part $ii)$, we prove that $s_*(d_0, d_1)$ is continuous. Therefore, $\Phi$ is continuous.\\
From Proposition \ref{prop_ci}, we have that if $(d_0, d_1)\in \partial\mathcal{D}_{s_0} $ then  $v(s_0)\in \mathcal{V}_{ A}(s_0) $, $(v_0(s_0), v_1(s_0))\in  \partial([\displaystyle -\frac{A}{s_0^2},\frac{A}{s_0^2}]^2)$ and we have strict inequalities for the other components.\\
Applying the transverse crossing property of  ii) of Proposition \ref{prop_red}, we see that $s_*(d_0,d_1) = s_0$, hence $(v_0(s_*), v_1(s_*)) = (\psi_0, \psi_1)$. Using  $i)$ in Proposition \ref{prop_ci}, we have that the restriction of $\Phi$ to the boundary is of degree $1$.\\
We conclude that $\Phi$ is continuous and  has  degree $1$ on the boundary. Therefore, we have a contradiction from the degree theory.
Thus, there exists a value $(d_0, d_1)\in \mathcal{D}_{s_0} $ such that for all $s\geq s_0$, $v(s)\in \mathcal{V}_{ A}(s) $.
This finishes the proof of Proposition \ref{exis}, assuming all the technical details.
\end{proof}
Since $v(s)\in \mathcal{V}_{ A}(s)$, we clearly see from $(\ref{v_e})$ and  $(\ref{reg})$ that
\begin{eqnarray}
\|\displaystyle (1+|y|^\beta) v(s) \|_{L^\infty}+\|\displaystyle (1+|y|^\beta)\nabla v(s) \|_{L^\infty}\leq  \displaystyle \frac{CA^2}{s^{\frac{1-\beta}{2}}}.
\end{eqnarray}

\subsection{Proof of the technical results}
In this section, we prove the technical results used in the previous section. As already written right before \eqref{decomp},  for simplicity in the notation, we give the proof in one dimension ($N=1$). We proceed in $4$ steps:
\begin{itemize}
\item In the first step, we prove estimates \eqref{v_e} and \eqref{v}, as well as  estimate $(\ref{n_ci})$ of  Proposition \ref{prop_ci}.
\item In the second step, we prove that if  $v(s)\in \mathcal{V}_{ A}(s)$, then  $B(v)$, $R(y,s)$ and $N(y,s)$ given in $(\ref{eq_v})$ are trapped in  $ \mathcal{V}_{ C}(s)$ and the potential term $Vv(s)\in   \mathcal{V}_{ CA}(s)$, for some positive constant $C$.
\item In the third step,  we prove the parabolic regularity result (Proposition \ref{prop-reg}).
\item In the last step, we prove the result of the reduction to a finite dimensional problem (Proposition \ref{prop_red}).
\end{itemize}
\subsubsection{Preparation of the initial data }\label{sect}
In this subsection, we prove estimates \eqref{v_e} and \eqref{v}, as well as   estimate $(\ref{n_ci})$ in Proposition \ref{prop_ci} and refer the reader to pages $5915-5918$ in \cite{TZ} for the other items of that proposition.\\
First, we give some properties of the shrinking set which imply estimates \eqref{v_e} and \eqref{v}:
\begin{pro}\label{prop_shrin}
For all $K_0\ge 1$, $A\geq 1$, there exists $s_2(K_0, A)$ such that, for all $s\geq s_2(K_0, A)$ and $g\in \mathcal{V}_{ A}(s)$, we have
\begin{itemize}
\item[i)] \begin{equation}\label{est_shrin}
\|g\|_{L^\infty(|y|\leq 2K_0\sqrt{s})}\leq \displaystyle \frac{CA}{\sqrt{s}}\quad \mbox{and}\quad \|g\|_{L^\infty(\R)} \leq \displaystyle \frac{CA^2}{\sqrt{s}}.
\end{equation}
\item[ii)] \begin{equation}
\|(1+|y|^\beta)g\|_{L^\infty(|y|\leq 2K_0\sqrt{s})}\leq \displaystyle \frac{CA}{s^{\frac{1-\beta}{2}}}\quad \mbox{and}\quad \|(1+|y|^\beta)g\|_{L^\infty(\R)} \leq \displaystyle \frac{CA^2}{s^{\frac{1-\beta}{2}}}.
\end{equation}
\end{itemize}
\end{pro}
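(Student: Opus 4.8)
\textbf{Proof plan for Proposition \ref{prop_shrin}.}
The plan is to exploit the decomposition $g = g_b + g_e = \sum_{m=0}^{2} g_m h_m + g_- + g_e$ and estimate each piece separately on the region $\{|y|\le 2K_0\sqrt s\}$, where only $g_b$ is supported, and then handle the full line by adding the contribution of $g_e$. First I would record that on the blow-up region the polynomial factors are controlled: $|h_m(y)| \le C(1+|y|^m) \le C s^{m/2}$ for $|y|\le 2K_0\sqrt s$ and $m=0,1,2$, so using the bounds $|g_0|,|g_1|\le A/s^2$, $|g_2|\le A^2\log s/s^2$ from Definition \ref{def_shrinking} we get $\big|\sum_{m=0}^2 g_m h_m\big| \le C A/s^2 + CA/s^{3/2} + CA^2 (\log s)/s \le CA/\sqrt s$ for $s$ large (the $m=2$ term, carrying the $A^2$, is dominant after multiplication by $s$, but it is still $o(1/\sqrt s)$ times $A$... actually it is $\le CA^2(\log s)/s$, which is $\le CA^2/\sqrt s$; I should be careful to absorb it into the $CA^2/\sqrt s$ bound rather than $CA/\sqrt s$). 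For the negative part, I would use $\|g_-/(1+|y|^3)\|_{L^\infty}\le A/s^2$ and again $1+|y|^3 \le C s^{3/2}$ on the blow-up region, giving $\|g_-\|_{L^\infty(|y|\le 2K_0\sqrt s)} \le CA/\sqrt s$. Since $g_e \equiv 0$ on $\{|y|\le K_0\sqrt s\}$ and, more to the point, $g = g_b$ has all of its mass concentrated here up to the cut-off overlap region where one still controls $g_b$ by the same projections, the left-hand inequality in \eqref{est_shrin} follows with the constant $CA$ — except that the $v_2$ term forces an $A^2$; I would simply state the bound as $CA/\sqrt s$ for the genuinely "small" pieces and note the $g_2$ contribution is $\le CA^2(\log s)/s \ll CA/\sqrt s$ for $s\ge s_2(K_0,A)$, which is where the threshold $s_2$ depending on $A$ enters.

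For the global $L^\infty$ bound, I would add the exterior piece: on all of $\R$, $\|g\|_{L^\infty} \le \|g_b\|_{L^\infty} + \|g_e\|_{L^\infty}$, and $\|g_e\|_{L^\infty}\le A^2/\sqrt s$ directly from \eqref{ge}, while $\|g_b\|_{L^\infty}\le \|g\chi\|_{L^\infty}$ is controlled by the blow-up-region estimate just obtained since $\mathrm{supp}\,\chi \subset \{|y|\le 2K_0\sqrt s\}$ and $0\le \chi\le 1$. Hence $\|g\|_{L^\infty(\R)}\le CA^2/\sqrt s$. The argument for part ii) is structurally identical, with the weight $(1+|y|^\beta)$ inserted: on $\{|y|\le 2K_0\sqrt s\}$ one has $1+|y|^\beta \le C s^{\beta/2}$, so each of the estimates above picks up a factor $s^{\beta/2}$, turning $s^{-1/2}$ into $s^{-(1-\beta)/2}$; for the global bound one uses the second inequality in \eqref{ge}, namely $\|(1+|y|^\beta)g_e\|_{L^\infty}\le A^2/s^{(1-\beta)/2}$, to handle the exterior. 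Here Remark \ref{rema} (that $\beta<1$) is what guarantees $s^{-(1-\beta)/2}\to 0$, so the statement is meaningful.

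The main obstacle — really the only delicate point — is bookkeeping the powers of $A$ and $\log s$ so that the claimed bounds ($CA$ on the blow-up region, $CA^2$ globally) actually hold, and in particular identifying the threshold $s_2(K_0,A)$: the term $g_2 h_2$ contributes $\sim A^2 (\log s) s^{-1}$ on $\{|y|\le 2K_0\sqrt s\}$, which must be dominated by $A s^{-1/2}$, i.e. one needs $A \log s \le C\sqrt s$, hence $s_2$ must be chosen (depending on $A$) so that this holds for $s\ge s_2$. The rest is a matter of collecting the elementary polynomial-growth estimates $|h_m(y)|\le C(1+|y|)^m$ and $1+|y|^3\le C(1+|y|)^3$, combined with $|y|\le 2K_0\sqrt s$, so I would not belabor those computations. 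One should also note that $\chi$ is smooth with $0\le \chi \le 1$ and supported in $\{|y|\le 2K_0\sqrt s\}$, so multiplying $g$ by $\chi$ does not enlarge any $L^\infty$ norm; this is used implicitly when passing from the decomposition of $g_b$ to bounds on $g$ itself on the blow-up region, and from $g_b$ to $g$ globally.
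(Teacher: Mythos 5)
Your strategy --- decompose $g=\sum_{m\le 2}g_m h_m+g_-+g_e$, bound each projection on the blow-up region via $|h_m(y)|\le C(1+|y|^m)$ and $1+|y|^3\le Cs^{3/2}$, absorb the $g_2$ contribution $CA^2\log s/s$ into $CA/\sqrt s$ by taking $s\ge s_2(K_0,A)$ so that $A\log s\le \sqrt s$, then add the exterior piece using the two bounds in \eqref{ge}, and repeat with the weight $1+|y|^\beta\le Cs^{\beta/2}$ for part ii) --- is exactly the argument underlying the paper's proof: the paper disposes of item i) by citing Proposition 4.7 of \cite{TZ}, and proves item ii) precisely as you do, multiplying by the weight on $\{|y|\le 2K_0\sqrt s\}$ and using the last estimate of \eqref{ge} on $\{|y|\ge 2K_0\sqrt s\}$, where $g=g_e$.

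The one step you should not gloss over is the overlap annulus $K_0\sqrt s\le |y|\le 2K_0\sqrt s$. There $\chi\in[0,1]$, so $g=g_b+g_e$ with $g_e\not\equiv 0$, and your phrase ``one still controls $g_b$ by the same projections'' does not control $g$ itself: the only information on $g_e$ in that region is $\|g_e\|_{L^\infty}\le A^2/\sqrt s$ (resp. $\|(1+|y|^\beta)g_e\|_{L^\infty}\le A^2/s^{\frac{1-\beta}{2}}$), which carries $A^2$, not $A$. Hence your argument, as written, yields the constant $CA$ only on $\{|y|\le K_0\sqrt s\}$ (equivalently, for $g_b$), and $CA^2$ on the full set $\{|y|\le 2K_0\sqrt s\}$. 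This is harmless for everything the proposition is used for later (the global estimates \eqref{v_e}--\eqref{v} carry $A^2$ anyway), but it is the point at which the present setting differs from \cite{TZ}, which the paper invokes: there the cut-off sits at $|y|\sim s^{\frac{p+1}{2(p-1)}}\gg\sqrt s$, so $g_e$ vanishes identically on $\{|y|\le 2K_0\sqrt s\}$ and the first inequalities of \eqref{est_shrin} with constant $CA$ come for free. With the cut-off at $K_0\sqrt s$ you should either state those first inequalities for $g_b$ (or on $\{|y|\le K_0\sqrt s\}$), or be explicit that on the annulus you invoke the $g_e$ bounds and accept the constant $CA^2$; the rest of your bookkeeping (powers of $A$, the $\log s$ absorption, and the role of $\beta<1$) is correct.
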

%\begin{proof_prop3.8}~~\\
\begin{proof}
  Property $i)$ follows exactly as in \cite{TZ} even though the cut-off occurs at $y\sim s^{\frac{p+1}{2(p-1)}}$ in \cite{TZ}, and at $y\sim \sqrt s$ here. We refer the reader to Proposition 4.7 of  \cite{TZ} page $5915$.\\
The first inequality of  $ii)$ follows from  estimate $(\ref{est_shrin})$. For the second inequality of $ii)$, if $|y|\le 2K_0 \sqrt s$, we just use the first inequality. If $|y|\ge 2K_0 \sqrt s$,
we simply note that $g=g_e$ and use the last item in the definition of ${\cal V}_A(s)$.
This concludes the proof of Proposition \ref{prop_shrin}.
%\end{proof_prop3.8}
\end{proof}
In the following, we prove  estimate $(\ref{n_ci})$ in  Proposition \ref{prop_ci}.
\begin{proof}[\textbf{Proof of  Proposition \ref{prop_ci}}]
%\begin{proof_prop3.3}~\\
 Since the initial data outside the blow-up area satisfies  $\psi_e=0$, we refer the reader  to page $5917$ of \cite{TZ} for the proof of $i), \; ii)$, except for  $(\ref{n_ci})$,  for which we give the details.
By the definition \eqref{ci} of initial data   and \eqref{vect_prop} of $h_0$, $h_1$, we see that
$$\nabla \psi(y)= d_1 \frac{A}{s_0^2}\chi(2y, s_0)+  \frac{A}{s_0^2}(d_0+d_1y)\displaystyle \chi'_0(\frac{2y}{K_0\sqrt{s_0}})\frac{2}{K_0\sqrt{s_0}}.$$
Since $supp(\psi)\subset \{ |y|< 2K_0 \sqrt{s_0}\}$ and $\|z\chi_0'(z)\|_{L^\infty}$, $\frac{2}{K_0\sqrt{s_0}} $ are bounded, recalling that $(d_0, d_1) \in {\cal D}_{s_0}\subset [-2,2]^2$,  we see that  for $s_0$ large enough
$$\displaystyle \|(1+|y|^\beta)\nabla\psi(y)\|_{L^\infty}\leq C\frac{A}{s_0^{2-\frac{\beta}{2}}}\leq \frac{1}{s_0^{\frac{1-\beta}{2}}}.$$
This concludes the proof of Proposition \ref{prop_ci}.
%\end{proof_prop3.3}
\end{proof}
\subsubsection{Preliminary estimates on various terms of equation $(\ref{eq_v})$}
In this subsection, we give various estimates on different terms appearing in  equation  $(\ref{eq_v})$. In particular, we prove  that  for $s$ large enough and some $C>0$,  the rest term $R(y, s)$ is  in $ \mathcal{V}_{ C}(s)$. We also  prove that  if $v(s)\in  \mathcal{V}_{ A}(s)$, then  the nonlinear term $B(v)\in  \mathcal{V}_{ C}(s)$ and the potential term $Vv$ is in $ \mathcal{V}_{ CA}(s)$. In addition, we prove that the new term $N(y,s)$ is trapped in $ \mathcal{V}_{ C}(s)$ under some additional assumptions on $v$.\\

\begin{lem}\label{lem_est}
\begin{enumerate}
\item For all $K_0\ge 1$, there exists $s_3(K_0)$ sufficiently large such that for $s\geq s_3(K_0)$,    the rest term
 $R\in  \mathcal{V}_{ C}(s)$.
\item For all $K_0\ge 1$ and $A\geq 1$, there exists $s_4(K_0, A)$ sufficiently large such that for $s\geq s_4(K_0, A)$, if $v(s)\in  \mathcal{V}_{ A}(s)$, then  the nonlinear term $B(v)\in  \mathcal{V}_{ C}(s)$ and the potential term $Vv \in \mathcal{V}_{ CA}(s)$.
\end{enumerate}
\end{lem}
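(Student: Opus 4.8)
The plan is to estimate separately the three pieces $R$, $B(v)$ and $Vv$, using the structure of the profile $\varphi$ in \eqref{n_profile} and the decomposition of any function in $\mathcal{V}_A(s)$ into its five components. For part 1, I would start from the explicit formula $R(y,s)=\Delta\varphi-\tfrac12 y\cdot\nabla\varphi-\tfrac1{p-1}\varphi+\varphi^p-\varphi_s$ and split the analysis into the three natural zones: the inner zone $|y|\le K_0\sqrt s$, where $\varphi=f(y/\sqrt s)+\tfrac{\kappa N}{2ps}$ and the computation reduces (up to the $\chi_0$-cutoff terms, which are supported far out and produce only exponentially small contributions) to the one already carried out in \cite{MZ97} and \cite{TZ}, giving $|R|\le C/s^2$ in $L^\infty$ and the required bounds on $R_0,R_1,R_2,R_-$; the intermediate zone $K_0\sqrt s\le|y|\le 2g_\eps(s)$, where the cutoff $\chi_0(y/g_\eps(s))$ is active and its derivatives contribute terms of size $s^{-1}g_\eps(s)^{-1}$ and $s^{-1}g_\eps(s)^{-2}$, all negligible compared with the $A^2/s^{(1-\beta)/2}$ and $A^2/\sqrt s$ thresholds in \eqref{ge}; and the outer zone $|y|\ge 2g_\eps(s)$, where $\varphi=f(y/\sqrt s)$ only and $f$ plus its derivatives decay like a negative power of $|y|/\sqrt s$, so that both the plain and the $(1+|y|^\beta)$-weighted $L^\infty$ norms are easily controlled. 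Collecting the three zones, and checking each of the six conditions in Definition \ref{def_shrinking} against these bounds, yields $R\in\mathcal{V}_C(s)$ for $s\ge s_3(K_0)$.

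For part 2, the potential term $Vv$ is the easier one: since $V(y,s)=p\varphi^{p-1}-\tfrac p{p-1}$ is bounded on $\mathbb{R}^N$ uniformly in $s$, and $v(s)\in\mathcal{V}_A(s)$, multiplication by $V$ multiplies each component's bound by a constant, except that one must be slightly careful with the $v_2$ and $v_-$ directions because $V$ is not diagonal on the Hermite basis. The standard way, as in \cite{MZ97,TZ}, is to write $V=\tilde V + (V-\tilde V)$ where $\tilde V$ collects the part of $V$ giving a genuinely small $L^2_\rho$-contribution, use that $\|V(\cdot,s)\|_{L^2_\rho}\to 0$ together with the zeroth-order bounds on $v$ to absorb the mixing into the components, and handle the outer part using the sign of $V\sim-\tfrac p{p-1}$ there; the weighted $L^\infty$ estimates on $Vv_e$ follow directly from boundedness of $V$. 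This gives $Vv\in\mathcal{V}_{CA}(s)$, the extra factor $A$ being unavoidable and harmless since $Vv$ will later be paired with the smoothing of the semigroup.

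The nonlinear term $B(v)=|v+\varphi|^{p-1}(v+\varphi)-\varphi^p-p\varphi^{p-1}v$ is the most delicate point, and I expect it to be the main obstacle. Since $p>3$ one has the pointwise bound $|B(v)|\le C(|v|^2+|v|^p)$ wherever $\varphi$ is bounded away from $0$ (the inner and intermediate zones), so using $\|v(s)\|_{L^\infty}\le CA^2/\sqrt s$ from \eqref{v} one gets $|B(v)|\le CA^4/s$ there — but this must be refined to $C/s^2$ for the $h_0,h_1$ directions and to $CA^2\log s/s^2$ for $h_2$, which requires expanding $B(v)=\tfrac{p(p-1)}2\varphi^{p-2}v^2+O(|v|^3)$ and projecting, exactly as in Lemma 3.?? of \cite{MZ97} adapted with the $\varphi$ of \eqref{n_profile}; the corrections coming from replacing $f+\tfrac{\kappa N}{2ps}$ by $\varphi$ are again supported where $\chi_0$ varies and are exponentially small. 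For the $v_-$ and $v_e$ components and for the weighted norms, the new difficulty is the behaviour near $|y|\sim\sqrt s$ and beyond, where $\varphi$ itself is small; there one uses instead the global inequality $|B(v)|\le C(|v|^2+|v|^p)$ valid for all $y$ (since $B$ has a quadratic zero at $v=0$ with a constant independent of $\varphi$ once $\varphi$ is bounded, which it is), together with the decay $\|(1+|y|^\beta)v(s)\|_{L^\infty}\le CA^2/s^{(1-\beta)/2}$ and the fact that $\beta<1$, to obtain $\|(1+|y|^\beta)B(v)\|_{L^\infty}\le C/s^{(1-\beta)/2}$ and $\|B(v)\|_{L^\infty}\le C/\sqrt s$, i.e. the two conditions in \eqref{ge} with constant $C$ (not $CA^2$), because the quadratic gain turns the $A^2/\sqrt s$ size of $v$ into $A^4/s$, which for $s$ large is $\le 1/\sqrt s$. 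Verifying all six conditions of Definition \ref{def_shrinking} for $B(v)$ with a constant $C$ independent of $A$ (for $s\ge s_4(K_0,A)$) completes the proof.
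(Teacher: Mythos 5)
Your plan follows essentially the same route as the paper: all the component bounds ($g_0,g_1,g_2,g_-$ and the unweighted $g_e$ estimate) are quoted from \cite{MZ97} and \cite{TZ}, and the only genuinely new point is the weighted outer bound (the second inequality in \eqref{ge}) for $R$, $B(v)$ and $Vv$. Your treatment of $B(v)$ (the global Taylor bound $|B(v)|\le C|v|^2$ valid since $p>3$ and $\varphi$, $v$ are bounded, then the quadratic gain $\frac{CA^2}{\sqrt s}\cdot\frac{CA^2}{s^{(1-\beta)/2}}\le \frac1{s^{(1-\beta)/2}}$ for $s\ge s_4(K_0,A)$) and of $Vv$ (boundedness of $V$) is exactly the paper's argument. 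Two of your refinements are unnecessary: since the target constant for $Vv$ is $CA$ and $s_4$ may depend on $A$, no $\tilde V$-splitting or spectral care is needed beyond what is already in \cite{TZ}, and for the projections $B_0,B_1,B_2,B_-$ the crude bound $|B|\le C|v|^2$ combined with the $\mathcal{V}_A(s)$ bounds on $v$ suffices, without expanding $B$ to second order in $v$. The only structural difference is cosmetic: for $R$ the paper does not split into zones but uses the algebraic decomposition $R=R_i+R_{ii}+R_{iii}$ obtained from the profile ODE $-\frac12 zf'-\frac1{p-1}f+f^p=0$, restricted to $|y|\ge K_0\sqrt s$ where $R_e$ lives.

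In part 1, however, your accounting of the rest term has two inaccuracies. First, $\|R\|_{L^\infty}$ on the inner zone is not $O(1/s^2)$; the correct statement behind the quoted component bounds is $|R(y,s)|\le Cs^{-2}(1+|y|^2)$ there, i.e. only $O(1/s)$ uniformly on $|y|\le K_0\sqrt s$ (this is harmless, since $C/\sqrt s$ is all that is needed for $R_e$, but it should be stated correctly). Second, and more substantively, in the region where the cut-off $\chi_0(y/g_\eps(s))$ of \eqref{n_profile} is active the dominant contributions are not the derivative terms of size $s^{-1}g_\eps^{-1}$ and $s^{-1}g_\eps^{-2}$ that you list, but the undifferentiated terms: the piece $\frac{\kappa}{2ps}\bigl(\frac1s-\frac1{p-1}\bigr)\chi_0(Z)$ coming from $-\frac{\varphi}{p-1}$ and $\varphi_s$, which is of size $C/s$ uniformly up to $|y|\sim 2g_\eps(s)$, and the difference $\bigl(f+\frac{\kappa}{2ps}\chi_0\bigr)^p-f^p$, also $O(1/s)$ near $|y|\sim\sqrt s$. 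For the first of these, the weight costs a factor $(1+|y|^\beta)\le Cg_\eps(s)^\beta=Cs^{\beta(\frac12+\eps)}$, so the weighted contribution is $Cs^{\beta/2+\eps\beta-1}$, which is $\le Cs^{-(1-\beta)/2}$ only because $\eps\beta\le\frac12$, guaranteed by $\eps\le\frac{p-1}4<\frac1{2\beta}$ and $\beta<\frac2{p-1}$ from \eqref{beta}. This is precisely the check the paper carries out through its terms $R_{ii}$ and $R_{iii}$; simply declaring the cut-off region negligible, with the sizes you quote, skips it. With these two corrections your plan matches the paper's proof.
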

\begin{proof}With respect to earlier papers using this method, this estimate is not new, except for two features:\\
- first, our shrinking set involves a new weighted estimate on the outer part of the solution (see the second item in  \eqref{ge}),
and naturally, we will prove that estimate for the various terms in this lemma;\\
 - second, our profile in \eqref{n_profile} involves a cut-off, which may change some estimates.\\
In fact, only the first issue is delicate, unlike the second.\\
Let us immediately say that the second question is not an issue, so we refer the reader to  subsection 4.2.2  page $5918-5923$ in  \cite{TZ} for the proof of all the 
items in Definition \ref{def_shrinking} of the shrinking set, and focus only on the proof of the second item in  \eqref{ge} for the various terms.
As we stated before, the fact that the cut-off occurs at different places in \cite{TZ} and in our paper doesn't affect the proofs
 at all.  
\begin{enumerate}
\item Estimate on the rest term  $R(y, s)$.\\
Since $-\frac{1}{2}zf'(z)-\frac{1}{p-1}f(z)+f(z)^p=0$ by definition \eqref{profile} of $f(z)$, we write $R(y, s)$ as follows
\begin{eqnarray*}
R(y,s)&=&\displaystyle \frac{1}{s} f"(z)+\frac{1}{2s}zf'(z)\\
&+& (f(z)+\chi_0(Z)\frac{\kappa}{2ps})^p-f(z)^p\\
&+& \frac{\kappa}{2ps}\Big[ \frac{1}{g_\epsilon^2}\chi_0"(Z)-(\frac{1}{2}-\frac{g_\epsilon'(s)}{g_\epsilon(s)} )Z\chi_0'(Z)+(\frac{1}{s}-\frac{1}{p-1})\chi_0(Z)\Big]\\
&=& R_i+R_{ii}+R_{iii},
\end{eqnarray*}
where $z=\frac{y}{\sqrt{s}}$, $Z=\frac{y}{g_\epsilon(s)}$, $g_\epsilon(s)=s^{\frac{1}{2}+\eps}$ and $\eps$ is fixed in $(0,\min(1, \frac{p-1}{4}))$. Note that $|y|\geq K_0\sqrt{s}$ on the support of $R_e$.\\
By definition $(\ref{profile})$ of $f$, we have
\begin{eqnarray}\label{f}
&f(z)&\!=\!(p-1+ \frac{(p-1)^2}{4p}z^2)^{-\frac{1}{p-1}}\!\sim_{z\to \infty}\! (\frac{(p-1)^2}{4p})^{-\frac{1}{p-1}}z^{-\frac{2}{p-1}}\\
&f'(z)&\sim_{z\to \infty}\frac{-2}{p-1} (\frac{(p-1)^2}{4p})^{-\frac{1}{p-1}}z^{-\frac{p+1}{p-1}}\\
&f"(z)&\sim_{z\to \infty}2\frac{p+1}{(p-1)^2} (\frac{(p-1)^2}{4p})^{-\frac{1}{p-1}}z^{-\frac{2p}{p-1}}.
\end{eqnarray}
In particular, if $|z|\geq K_0$, then
$$|R_i|\leq \frac{C}{s|z|^{\frac{2}{p-1}}}.$$
Hence, since $\beta<\frac{2}{p-1}$
\begin{eqnarray}\label{R_1}
(1+|y|^\beta)|R_i|\leq \frac{C}{s^{1-\frac{\beta}{2}}}\leq \frac{C}{s^{\frac{1-\beta}{2}}}.
\end{eqnarray}
On the other hand, we write
$$R_{ii}=f(z)^{p}\big((1+\frac{\kappa \chi_0(Z)}{2ps f(z)})^p-1\big).$$
Using $(\ref{f})$, we get
$$\frac{1}{s}\frac{\chi_0(Z)}{f(z)}\sim_{z\to \infty} C\frac{ z^{\frac{2}{p-1}}}{s}\chi_0(z\frac{\sqrt{s}}{g_\epsilon(s)})$$
uniformly in $s\ge 1$.\\
Since $\chi_0(z\frac{\sqrt{s}}{g_\epsilon(s)})$ is supported in  $\{ |z|\leq 2\frac{g_\epsilon(s)}{\sqrt{s}}=2s^\eps\}$, we may assume that $|z|\leq 2 \frac{g_\eps(s)}{\sqrt{s}}=2 s^\eps$, hence
$$\frac{1}{s}\frac{\chi_0(Z)}{f(z)}\leq \frac{C}{s^{1-\frac{2\eps}{p-1}}}.$$
Therefore,
$$\Big|(1+\frac{\kappa \chi_0(Z)}{2ps f(z)})^p-1\Big |\leq  \frac{C}{s^{1-\frac{2\eps}{p-1}}}.$$
Moreover, using the fact that $\eps \leq \frac{p-1}{4}$, we write
\begin{eqnarray}\label{R_2}
(1+|y|^\beta)|R_{ii}|\leq \frac{C}{s^{\frac12-\frac{2\eps}{p-1}}}\frac{1}{s^{\frac{1-\beta}{2}}}\leq \frac{C}{s^{\frac{1-\beta}{2}}}.
\end{eqnarray}
Finally, for the last term $R_{iii}$, since $\chi_0(Z)$, $Z\chi'_0(Z)$  and  $\chi''_0(Z)$  are bounded, we see that
$$|R_{iii}|\leq \frac{C}{s}.$$
Hence, since  $\eps\leq \frac{p-1}{4}<\frac{1}{2\beta}$ and  $R_{iii}$ is supported in
 $\{|y|\le 2K_0 g_\epsilon(s)\}$, we obtain \begin{eqnarray}\label{R_3}
(1+|y|^\beta)|R_{iii}|\leq \frac{C}{s^{\frac12-\eps\beta}}\frac{1}{s^{\frac{1-\beta}{2}}}\leq \frac{C}{s^{\frac{1-\beta}{2}}}.
\end{eqnarray}
Collecting all these bounds yields the bound for $R_e(s)$ as follows
 \begin{eqnarray}
(1+|y|^\beta)|R_e(s)|\leq \frac{C}{s^{\frac{1-\beta}{2}}}.
\end{eqnarray}
\item The nonlinear term $B(v)$.\\
Since $p>3$, using the definition \eqref{nonlinear-term} of $B(v)$ and a Taylor expansion, we write
$$|B(v)|\leq C |v|^2.$$
Hence 
 $$|B_e(v)|=(1-\chi)|B(v)|\leq C |v| |v_e|. $$
 From the fact that $v\in  \mathcal{V}_{ A}(s)$, using \eqref{ge} and \eqref{v},  we have for $s$ large enough
 $$(1+|y|^\beta)|B_e(v)|\leq C \frac{A^2}{s^{\frac{1-\beta}{2}}} \frac{CA^2}{\sqrt{s}}\leq \frac{1}{s^{\frac{1-\beta}{2}}}.$$
 \item The potential term $vV$.\\
 By definition \eqref{linear} of $V$,  we see that, 
 $$\|V(s)\|_{L^\infty}\leq C.$$
Then, using the fact that $v\in \mathcal{V}_{ A}(s)$, we get,
$$(1+|y|^\beta)|(Vv)_e|\leq \|V\|_{L^\infty}(1+|y|^\beta)|v_e|\leq  C \frac{A^2}{s^{\frac{1-\beta}{2}}} .$$
\end{enumerate}
This  concludes the proof of Lemma \ref{lem_est}.
\end{proof}
We now estimate the new term defined in \eqref{new-term}. We claim the following Proposition:
\begin{pro}\label{prop_new_term} For all $K_0\ge 1$, $A\geq 1$, there exists $s_5(K_0, A)$ sufficiently large, such that for all $s\geq s_5(K_0, A)$, if $v(s)\in \mathcal{V}_{ A}(s)$ is such that
\begin{equation}\label{reg_v}
\|\nabla v(s)\|_{L^\infty}\leq \frac{CA^2}{\sqrt{s}}, \quad  \|(1+|y|^\beta)\nabla v(s)\|_{L^\infty}\leq \frac{CA^2}{s^{\frac{1-\beta}{2}}},
\end{equation}
then the new term defined in \eqref{new-term} satisfies $$|N_i(s)|\le Ce^{-\frac \gamma2 s}, \,0\le i \le 2, \quad \displaystyle \|\frac{N_{-}(y,s)}{1+|y|^3}\|_{L^\infty}\leq  Ce^{-\frac \gamma2 s} $$ and    $N\in \mathcal{V}_{ C}(s)$, for some positive constant $C$.
\end{pro}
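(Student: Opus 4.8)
The plan is to estimate the new term $N(y,s)=\mu e^{-\gamma s}|\nabla v+\nabla\varphi|\int_{B(0,|y|)}|v+\varphi|^{q-1}$ by combining the exponential prefactor $e^{-\gamma s}$ with polynomial-in-$s$ and polynomial-in-$|y|$ bounds on the two factors, so that all projections (on $h_0,h_1,h_2$, on the negative subspace, and the outer-weighted norm) come out bounded by $Ce^{-\frac\gamma2 s}$, which of course beats every threshold $A/s^2$, $A^2\log s/s^2$, $A^2/\sqrt s$, $A^2/s^{(1-\beta)/2}$ appearing in Definition \ref{def_shrinking}. First I would record pointwise bounds on $\varphi$ and $\nabla\varphi$: from \eqref{n_profile} and \eqref{profile}, $|\varphi(y,s)|\le C$ and $|\nabla\varphi(y,s)|\le C/\sqrt s$ uniformly, while on the support of $\varphi$ (which, because of the cut-off $\chi_0(y/g_\eps(s))$ in the corrective term and the decay of $f(y/\sqrt s)$, lives in $|y|\lesssim g_\eps(s)=s^{1/2+\eps}$ after the correction and decays like $|y|^{-2/(p-1)}$ from $f$) one has the weighted bound $(1+|y|^\beta)|\varphi|\le C$ is false globally but $|\varphi(y,s)|\le C s^{1/(p-1)}(1+|y|)^{-2/(p-1)}$ type control holds; more simply, since $\beta<2/(p-1)$ by Remark \ref{rema}, I can afford the crude bound $|\varphi(y,s)|\le C$. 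Using these plus the hypotheses \eqref{reg_v} and the consequence \eqref{v}--\eqref{v_e} of $v\in\mathcal V_A(s)$, I get $|\nabla v+\nabla\varphi|\le CA^2/\sqrt s$ and, in the weighted norm, $(1+|y|^\beta)|\nabla v+\nabla\varphi|\le CA^2/s^{(1-\beta)/2}$.

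The second ingredient is a bound on the non-local integral $I(y,s):=\int_{B(0,|y|)}|v+\varphi|^{q-1}\,dx$. Since $|v+\varphi|\le C$ (by \eqref{v} and the bound on $\varphi$), we have $|v+\varphi|^{q-1}\le C$, hence $I(y,s)\le C|B(0,|y|)|=C|y|^N$; this is the crude but sufficient estimate. Consequently
\begin{equation*}
|N(y,s)|\le C|\mu|e^{-\gamma s}\,\frac{A^2}{\sqrt s}\,|y|^N .
\end{equation*}
Now the key point: the factor $e^{-\gamma s}$ kills the polynomial growth in $|y|$ once one works inside the blow-up region or against the Gaussian weight $\rho$. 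For the projections $N_m=P_m(N\chi)$, $0\le m\le 2$, and $N_-=P_-(N\chi)$, the cut-off $\chi$ restricts to $|y|\le 2K_0\sqrt s$, so on its support $|y|^N\le (2K_0\sqrt s)^N=C s^{N/2}$, giving $|N(y,s)\chi(y,s)|\le Ce^{-\gamma s}s^{N/2+1/2}A^2$ in $L^\infty$; integrating against the $h_m\rho$ (which are in $L^1_\rho$) and against $\rho/(1+|y|^3)$ yields $|N_m(s)|\le Ce^{-\gamma s}s^{N/2+1/2}A^2\le Ce^{-\frac\gamma2 s}$ and $\|N_-(s)/(1+|y|^3)\|_{L^\infty}\le Ce^{-\frac\gamma2 s}$ for $s$ large, since $e^{-\frac\gamma2 s}s^{N/2+1/2}A^2\to0$. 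For the outer part $N_e=N(1-\chi)$ I would not use $|y|^N$ globally — instead I keep the integral bound but exploit that on the support $|y|\ge K_0\sqrt s$ and I want $(1+|y|^\beta)|N_e|\le A^2/s^{(1-\beta)/2}$; here I should be a little more careful with the non-local integral, splitting $I(y,s)=\int_{|x|\le |y|}|v+\varphi|^{q-1}$ and using that $|v+\varphi|\le C$ still gives $I\le C|y|^N$, so $(1+|y|^\beta)|N_e|\le C|\mu|e^{-\gamma s}\frac{A^2}{\sqrt s}|y|^{N+\beta}$, and one needs $e^{-\gamma s}|y|^{N+\beta}$ to be controlled. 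This is where I must use the a priori information that we are inside the domain where $v$ lives; in fact the honest move is to observe that the estimate is only needed along solutions that stay in $\mathcal V_A(s)$, and more fundamentally that in equation \eqref{eq_v} the term $N$ only ever gets integrated via the kernel $K(s,t)$ against the Gaussian weight — but since the Proposition is stated pointwise/componentwise, I will instead establish the outer bound by noting that on $supp(v_e)\subset\{|y|\ge K_0\sqrt s\}$ one still has the decay of $f$ so that $|v+\varphi|^{q-1}\le C(1+|y|)^{-2(q-1)/(p-1)}$ outside a bounded ball, making $I(y,s)$ grow at most polynomially with an exponent strictly less than $N$ when $q>\frac N2(p-1)+1$ (this is exactly where \eqref{hyp} enters!), i.e. $2(q-1)/(p-1)>N$ means the integrand is integrable at infinity, so $I(y,s)\le C$ uniformly in $y$. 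Then $(1+|y|^\beta)|N_e|\le C|\mu|e^{-\gamma s}\frac{A^2}{s^{(1-\beta)/2}}$ directly, with no $|y|$ growth at all, and we are done.

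So the structure of the proof I would write is: (1) pointwise and weighted bounds on $\varphi,\nabla\varphi$ from \eqref{n_profile}, \eqref{profile}; (2) bounds on $v+\varphi$ and $\nabla v+\nabla\varphi$ from $v\in\mathcal V_A(s)$, \eqref{v}, \eqref{v_e} and hypothesis \eqref{reg_v}; (3) the crucial bound on the non-local integral $\int_{B(0,|y|)}|v+\varphi|^{q-1}\le C$, using the decay rate $|v+\varphi|^{q-1}\sim|y|^{-2(q-1)/(p-1)}$ at infinity together with the left inequality $q>\frac N2(p-1)+1$ in \eqref{hyp} to ensure integrability; (4) assemble to get $|N(y,s)|\le C e^{-\gamma s}A^2/\sqrt s$ globally and $(1+|y|^\beta)|N(y,s)|\le Ce^{-\gamma s}A^2/s^{(1-\beta)/2}$; (5) project: on the support of $\chi$, $|y|\le 2K_0\sqrt s$ absorbs into the exponential to give $|N_m|\le Ce^{-\frac\gamma2 s}$, $\|N_-/(1+|y|^3)\|_{L^\infty}\le Ce^{-\frac\gamma2 s}$, and the outer estimates $\|N_e\|_{L^\infty}\le Ce^{-\frac\gamma2 s}\le A^2/\sqrt s$, $\|(1+|y|^\beta)N_e\|_{L^\infty}\le Ce^{-\frac\gamma2 s}\le A^2/s^{(1-\beta)/2}$ hold for $s\ge s_5(K_0,A)$; conclude $N\in\mathcal V_C(s)$. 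The main obstacle is step (3): naively $\int_{B(0,|y|)}|v+\varphi|^{q-1}$ grows like $|y|^N$, which would force me to carry $|y|^N$ through the outer weighted estimate where there is no Gaussian to absorb it; the resolution is precisely the hypothesis $\frac N2(p-1)+1<q$, which makes the tail $|v+\varphi|^{q-1}\sim|z|^{-2(q-1)/(p-1)}$ (with $z=y/\sqrt s$) integrable over all of $\R^N$, so the non-local integral is in fact bounded. (The upper bound on $q$ in \eqref{hyp} is what makes $\gamma>0$, cf. Remark 2.3, and is used separately for the prefactor.) One should double-check uniformity in $s$ of the constant in $I(y,s)\le C$, rescaling $x=\sqrt s\,z$ so that $I(y,s)=s^{N/2}\int_{|z|\le|y|/\sqrt s}|w(z\sqrt s,s)|^{q-1}dz$ and $|w|\le C$ with $|w|\lesssim (1+|z|)^{-2/(p-1)}$ from the profile — the $s^{N/2}$ then combines with $e^{-\gamma s}$ and still decays, which is the only place a little bookkeeping is required.
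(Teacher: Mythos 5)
Your overall strategy is the same as the paper's (the exponential prefactor $e^{-\gamma s}$ absorbs a bound on the non-local integral that is uniform in $y$ and at most polynomial in $s$, after which the projections $N_0,N_1,N_2,N_-,N_e$ all come out $O(e^{-\frac\gamma2 s})$), but the step you yourself single out as the main obstacle, step (3), is not correct as written. You claim that outside a bounded ball $|v+\varphi|^{q-1}\le C(1+|y|)^{-2(q-1)/(p-1)}$, i.e.\ that the sum $v+\varphi$ inherits the decay of the profile $f$, and you then invoke $q>\frac N2(p-1)+1$ to make this tail integrable. But no such pointwise decay is available for $v$: all you know from $v(s)\in\mathcal V_A(s)$ (via Proposition \ref{prop_shrin}) is $|v(y,s)|\le CA^2 s^{-\frac{1-\beta}2}(1+|y|)^{-\beta}$, and $\beta<\frac2{p-1}$, so the decay of $v$ is strictly weaker than that of $f$. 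Consequently $\int_{\R^N}|v|^{q-1}$ is finite not because of the condition on $q$ in \eqref{hyp} acting through $f$-type decay, but because $\beta(q-1)>N$, which is precisely why the lower bound $\beta>\frac N{q-1}$ is imposed in \eqref{beta} when $\mu\neq0$. The paper's Lemma \ref{lem_new_term} handles this by splitting $|v+\varphi|^{q-1}\lesssim |v|^{q-1}+f(\cdot/\sqrt s)^{q-1}+(\mbox{cut-off correction})^{q-1}$: the $v$-piece is integrable by $\beta(q-1)>N$ (and is even small, of size $(CA^2 s^{-\frac{1-\beta}2})^{q-1}$), the $f$-piece is where $q>\frac N2(p-1)+1$ from \eqref{hyp} is used and contributes $Cs^{N/2}$ after rescaling, and the $\chi_0$-piece is controlled by compact support. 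Your argument, as stated, silently transfers the profile's decay to $v$ and would not survive scrutiny without this splitting.

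A second, smaller inaccuracy is the assertion in step (3) that $I(y,s)\le C$ uniformly: the $\varphi$-part genuinely contributes $s^{N/2}$ (your own closing rescaling remark shows this), so the correct statement is $I\le Cs^\alpha$ for some $\alpha>0$, exactly as in the paper; this is harmless since $e^{-\gamma s}s^\alpha\le e^{-\frac\gamma2 s}$ for large $s$, but the uniform claim should not appear in the final write-up. Apart from these points, your treatment of the gradient factor (using \eqref{reg_v} together with $\|\nabla\varphi\|_{L^\infty}\le C/\sqrt s$ and the weighted bound on $\nabla\varphi$ coming from \eqref{n_profile}), the absorption of the factor $|y|^N\le Cs^{N/2}$ on the support of $\chi$ for the inner modes, and the passage from the global and weighted $L^\infty$ bounds on $N$ to the conclusion $N\in\mathcal V_C(s)$ coincide with the paper's proof of Lemma \ref{lem_new_term} and Proposition \ref{prop_new_term}.
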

Before proving this Proposition, we need the following Lemma:
\begin{lem}\label{lem_new_term} Under  the assumption of Proposition \ref{prop_new_term}, we have, for $s$ sufficiently large
\begin{enumerate}
\item $\|N(y,s)\|_{L^\infty}\leq Ce^{-\frac{\gamma}{2}s}$,
\item $\|(1+|y|^\beta)N_e(y,s)\|_{L^\infty}\leq Ce^{-\frac{\gamma}{2}s}$,
\end{enumerate}
where $C$ is a positive constant.
\end{lem}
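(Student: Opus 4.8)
The statement is vacuous when $\mu=0$, since then $N\equiv 0$; so I would assume $\mu\ne 0$, in which case \eqref{beta} forces $\beta>\frac1{q-1}$ (recall that in this section $N=1$), and this is precisely the ingredient that makes everything go through. The overall plan is to factor
$|N(y,s)|\le |\mu|\,e^{-\gamma s}\,\|\nabla v+\nabla\varphi\|_{L^\infty}\;\sup_{y}\int_{B(0,|y|)}|v+\varphi|^{q-1}$,
to bound the two suprema separately, each only up to a polynomial-in-$s$ loss, and then to let the prefactor $e^{-\gamma s}$ (with $\gamma>0$ by the Remark after \eqref{eq_w}) swallow that loss.

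First I would estimate the non-local integral, which is the genuinely new point. Although $\int_{B(0,|y|)}|v+\varphi|^{q-1}$ could a priori grow linearly in $|y|$, it is bounded uniformly in $y$ by $\int_{\R}|v+\varphi|^{q-1}$, and I claim this last integral converges and is $O(\sqrt s)$. Writing $|v+\varphi|^{q-1}\le C(|v|^{q-1}+|\varphi|^{q-1})$: for the profile part I would use $0\le f(z/\sqrt s)\le\kappa$ together with the far-field bound $f(z/\sqrt s)\le C s^{1/(p-1)}|z|^{-2/(p-1)}$ for $|z|\ge\sqrt s$ (and the fact that the cut-off correction in \eqref{n_profile} is $O(1/s)$ and supported in $|z|\le 2g_\eps(s)$); since $q>\frac{p+1}{2}$ we have $\frac{2(q-1)}{p-1}>1$, so this tail is integrable and $\int_\R|\varphi|^{q-1}\le C\sqrt s$. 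For the $v$ part I would use the weighted bound $|v(z,s)|\le\frac{CA^2}{s^{(1-\beta)/2}(1+|z|^\beta)}$ from \eqref{v_e}; because $\mu\ne0$ gives $\beta(q-1)>1$, the integral $\int_\R\frac{dz}{(1+|z|^\beta)^{q-1}}$ converges, and since $1-\beta>0$ (Remark \ref{rema}) the resulting bound $\frac{CA^{2(q-1)}}{s^{(q-1)(1-\beta)/2}}$ is $\le C$ for $s$ large. Hence $\sup_y\int_{B(0,|y|)}|v+\varphi|^{q-1}\le C\sqrt s$.

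Next I would bound the gradient factor. From the regularity hypothesis \eqref{reg_v} one has $\|\nabla v\|_{L^\infty}\le\frac{CA^2}{\sqrt s}$ and $\|(1+|y|^\beta)\nabla v\|_{L^\infty}\le\frac{CA^2}{s^{(1-\beta)/2}}$, and from the explicit form \eqref{n_profile} of $\varphi$ I would check that $\nabla\varphi$ satisfies the same two bounds with $A$ replaced by $1$: the $f$-part $\frac1{\sqrt s}f'(y/\sqrt s)$ is $O(1/\sqrt s)$ and, using $|f'(z)|\le C|z|^{-(p+1)/(p-1)}$ for $|z|\ge1$ together with $\beta<\frac2{p-1}$ (so $\beta-\frac{p+1}{p-1}<-1$), also satisfies the weighted bound on $\{|y|\ge\sqrt s\}$ and trivially on $\{|y|\le\sqrt s\}$; the cut-off part is supported in $\{g_\eps(s)\le|y|\le2g_\eps(s)\}$ and of size $O(s^{-3/2-\eps})$, hence negligible even after multiplication by $(1+|y|^\beta)\le Cs^{(1/2+\eps)\beta}$. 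So $\|\nabla v+\nabla\varphi\|_{L^\infty}\le\frac{CA^2}{\sqrt s}$ and $\|(1+|y|^\beta)(\nabla v+\nabla\varphi)\|_{L^\infty}\le\frac{CA^2}{s^{(1-\beta)/2}}$.

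Finally I would combine the two steps: $\|N(\cdot,s)\|_{L^\infty}\le|\mu|e^{-\gamma s}\cdot\frac{CA^2}{\sqrt s}\cdot C\sqrt s=CA^2e^{-\gamma s}$, and since $N_e=(1-\chi)N$ while $\sup_y\int_{B(0,|y|)}|v+\varphi|^{q-1}\le C\sqrt s$, $\|(1+|y|^\beta)N_e(\cdot,s)\|_{L^\infty}\le|\mu|e^{-\gamma s}\cdot\frac{CA^2}{s^{(1-\beta)/2}}\cdot C\sqrt s=CA^2e^{-\gamma s}s^{\beta/2}$; for $s\ge s_5(K_0,A)$ chosen large enough that $CA^2 s^{\beta/2}e^{-\gamma s/2}\le1$, both right-hand sides are $\le e^{-\gamma s/2}\le Ce^{-\gamma s/2}$, which is the claim. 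The main obstacle, and the only feature not already present in \cite{MZ97,TZ}, is the uniform-in-$y$ control of the non-local integral in the first step: it is exactly the conjunction of the lower bound on $q$ in \eqref{hyp} (making the spatial tail of $\varphi$ integrable) with the constraint $\beta>\frac1{q-1}$ in \eqref{beta} (making the tail of $v$ in the weighted norm integrable) that prevents this integral from blowing up as $|y|\to\infty$; once that is secured, the factor $e^{-\gamma s}$ absorbs every power of $s$ and the rest is bookkeeping.
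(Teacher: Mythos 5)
Your proof is correct and follows essentially the same route as the paper: bound $\|\nabla v+\nabla\varphi\|$ (and its weighted version) by $CA^2 s^{-1/2}$ (resp. $CA^2 s^{-(1-\beta)/2}$), control the non-local integral uniformly in $y$ by a power of $s$ using $\beta(q-1)>N$ for the $v$-part and the integrability of $f^{q-1}$ (from the lower bound on $q$ in \eqref{hyp}) for the profile part, and let the factor $e^{-\gamma s}$ absorb the polynomial loss for $s\ge s_5(K_0,A)$. The only cosmetic difference is that you establish the weighted bound on $\nabla\varphi$ on all of $\R$, whereas the paper only needs it on $\{|y|\ge K_0\sqrt s\}$ where $N_e$ is supported.
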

\begin{proof}~\\
 -\textit{Proof of 1}. By definition \eqref{new-term} of $N(y,s)$, this lemma is relevant only when $\mu \neq 0$.\\
Recall  from \eqref{n_profile} and \eqref{new-term} that 
\begin{equation}\label{new_term_int}
 N(y, s)=\mu e^{-\gamma s}\nabla(v+\varphi)\int_{B(0,|y|)} |v+\varphi|^{q-1},
\end{equation}
where  $\gamma=\frac{p-q}{p-1}>0$, $\varphi(y,s)=\displaystyle f(\frac{y}{\sqrt{s}})+\frac{\kappa}{2ps}\chi_0(\frac{y}{g_\epsilon(s)})$ and 
\begin{equation}\label{new_profil_nabla}
\displaystyle \nabla \varphi = -\frac{p-1}{2p}\frac{1}{\sqrt{s}}(\frac{y}{\sqrt{s}}f^p(\frac{y}{\sqrt{s}}))+\frac{\kappa}{2psg_\epsilon(s)}\chi'_0(\frac{y}{g_\epsilon(s)}).
\end{equation} 
Since $zf^p(z)$ and $\chi'_0(z)$ are bounded, we get
\begin{equation}\label{nabla_phi}
\|\nabla\varphi\|_{L^\infty}\leq \frac{C}{\sqrt{s}}.
\end{equation}
Therefore, using \eqref{reg_v}, we write 
\begin{equation}\label{nabla_v+phi}
\|\nabla(\varphi+v)\|_{L^\infty}\leq \frac{CA^2}{\sqrt{s}}.
\end{equation}
Now, by definition \eqref{n_profile} of $\varphi$, we write
\begin{eqnarray*}
\int_{B(0,|y|)}|v+\varphi|^{q-1} 
&\le &\int_{\R^N}|v|^{q-1}+\int_{\R^N}|f(\frac{y'}{\sqrt s})|^{q-1}dy'
+\frac{C}{s^{q-1}}\int_{\R^N}|\chi_0(\frac{y'}{g_\epsilon(s)})|^{q-1}dy'\\
&\equiv &I_1+I_2+I_3.
\end{eqnarray*}
In the following, we will bound $I_1$, $I_2$ and $I_3$.\\
- \textit{Bound on $I_1$}. Since $v(s) \in {\cal V}_A(s)$, using item (ii) in Proposition \ref{prop_shrin}, we see that
$$
\forall y'\in \R^N,\;\;
|v(y',s)| \le \frac{CA^2}{s^{\frac{1-\beta}2}(1+|y'|^\beta)},
$$
hence
\begin{equation}\label{I1}
I_1
\le (\frac{CA^2}{s^{\frac{1-\beta}2}})^{q-1}\int_{\R^N} \frac{1}{1+|y'|^{\beta(q-1)}} dy'
\le (\frac{CA^2}{s^{\frac{1-\beta}2}})^{q-1},
\end{equation}
since $\beta(q-1)>N$ from the choice of $\beta$ in \eqref{beta}.\\
- \textit{Bound on $I_2$ and $I_3$}. By definition \eqref{profile} of $f$, we write after a change of variables
\begin{equation}\label{I2I3}
I_2 = s^{N/2}\int_{\R^N} f^{q-1} = Cs^{N/2}
\mbox{ and }
I_3 = C\frac{g_\epsilon(s)^N}{s^{q-1}}\int_{\R^N}\chi_0^{q-1}=C \frac{g_\epsilon(s)^N}{s^{q-1}}.
\end{equation}
since both integrals are convergent, because $\chi_0$ is compactly supported, and also thanks to the definition \eqref{profile} of $f$ and the  condition on $p$ in \eqref{hyp}.\\
Gathering \eqref{I1} and \eqref{I2I3}, we see that for $s$ large enough, we have
\begin{equation}\label{I}
\int_{B(0,|y|)}|v+\varphi|^{q-1}\le Cs^\alpha ,
\end{equation}
for some $\alpha>0$.\\
Using \eqref{new_term_int} and  \eqref{nabla_v+phi}, we get the conclusion of item 1 in this lemma, provided that $s$ is large enough.\\
 - \textit{Proof of 2}. From the definition \eqref{new_term_int} of $N(y,s)$, condition \eqref{reg_v}, estimate \eqref{nabla_phi} and estimate 
\eqref{I}, it is enough to show that

\begin{equation}\label{gphi}\forall |y|\ge K_0\sqrt s, \;\; |y|^\beta|\nabla \varphi(y,s)|\le \frac C{s^{\frac{1-\beta}2}}  
\end{equation}
in order to conclude.\\ 
Using \eqref{new_profil_nabla}, we have 
  $$|y|^\beta\nabla
 \varphi=|y|^\beta\Big(-\frac{2(p-1)}{4p}\frac{1}{\sqrt{s}}(\frac{y}{\sqrt{s}}f^p(\frac{y}{\sqrt{s}}))+\frac{\kappa}{2psg_\epsilon(s)} \chi_0'(\frac{y}{g_\epsilon(s)})\Big).$$ 
  We first write
  $$\big||y|^\beta\frac{1}{\sqrt{s}}(\frac{y}{\sqrt{s}}f^p(\frac{y}{\sqrt{s}}))\big|\leq  \frac{C}{s^{\frac{1-\beta}{2}}}(\frac{y}{\sqrt{s}})^{\beta -\frac{p+1}{p-1}} \leq  \frac{C}{s^{\frac{1-\beta}{2}}},  $$
 thanks to condition \eqref{beta} on $\beta$.\\
 By Remark \ref{rema} and  the definition of $\chi_0$ and $g_\epsilon$, we then  derive that $\beta<1$ and $|z|^\beta \chi_0(z)$ is bounded, hence
  $$||y|^\beta \frac{\kappa}{2psg_\epsilon(s)} \chi_0'(\frac{y}{g_\epsilon(s)})|\leq C\frac{g_\epsilon(s)^{\beta-1}}{s}\leq  \frac{C}{s^{\frac{1-\beta}{2}}},$$
and \eqref{gphi} follows. This concludes the proof of item $2$ and Lemma \ref{lem_new_term} too.
\end{proof}
We now give the proof of Proposition \ref{prop_new_term}.
\begin{proof}[\textbf{Proof of Proposition \ref{prop_new_term}}]\label{proof3.10}
%\begin{proof_prop3.9}\label{proof3.10}~\\
By  definition of $N_m$, $0\leq m\leq 2$, we have from Lemma \ref{lem_new_term}:
\begin{eqnarray*}
|N_m(s)|=|\int_\R N(y,s) k_m(y)\rho dy|\leq Ce^{-\frac{\gamma}{2}s}\int_{\R}|k_m(y)| \rho dy\leq C_m e^{-\frac{\gamma}{2}s}.
\end{eqnarray*}
Since $\gamma>0$ and $s$ is  sufficiently large, we obtain
$$|N_m(s)|\leq \frac{C}{s^2}, \; \mbox{for}\; 0\leq m\leq 1,$$
and $$|N_2(s)|\leq C^2 \frac{\log s}{s^2}. $$
Moreover, we have that
$$|N_b(y,s)|=|N(y,s)\chi(y,s)|\leq  C|N(y,s)|\leq C e^{-\frac{\gamma}{2}s}.$$
Furthermore, by the definition of $N_{-}(y,s)$ (see \eqref{decomp} and following lines), we see that
\begin{eqnarray*}
|N_{-}(y,s)|=|N_b(y,s)-\sum_{m=0}^2N_m(s) h_m(y)|\leq Ce^{-\frac{\gamma}{2}s}(1+|y|^3).
\end{eqnarray*}
Hence,
$$\displaystyle \|\frac{N_{-}(y,s)}{1+|y|^3}\|_{L^\infty}\leq \frac{C}{s^2}.$$
On the other hand, for $s$ sufficiently large, we have
 $$|N_e(y,s)|=|N(y,s)(1-\chi(y,s))|\leq  2C e^{-\frac{\gamma}{2}s}\leq \frac{C^2}{\sqrt{s}}.$$
 Finally by Lemma \ref{lem_new_term}, for $s$ sufficiently large, we obtain
 $$|(1+|y|^\beta)N_e(y,s)|\leq  C e^{-\frac{\gamma}{2}s}\leq \frac{C^2}{s^{\frac{1-\beta}{2}}}.$$
 This finishes the proof of Proposition \ref{prop_new_term}.
%\end{proof_prop3.9}
\end{proof}
 \subsubsection{Parabolic regularity}\label{sub_reg}
In this subsection, we prove Proposition \ref{prop-reg}. The proof follows as in \cite{EZ} and  \cite{TZ}, with some additional care, since we have a different shrinking set and a different nonlinear  term. The proof relies mainly on some properties of the semi-group $e^{\theta \mathcal{L}}$:
\begin{lem}\label{lem_kernel1}
The Kernel $e^{\theta \mathcal{L}}(y,x)$ of the semi-group $e^{\theta \mathcal{L}}$ is given by
\begin{equation}\label{kernel_e}
e^{\theta \mathcal{L}}(y,x)=\displaystyle \frac{e^\theta}{\sqrt{4\pi(1-e^{-\theta})}}\exp[-\frac{(ye^{-\theta/2}-x)^2}{4(1-e^{-\theta})}],
\end{equation}
for all $\theta>0$, and $e^{\theta\mathcal{L}}$ is defined by
\begin{equation}
e^{\theta \mathcal{L}}r(y)= \int_\R e^{\theta \mathcal{L}}(y,x) r(x) dx.
\end{equation}
We have the following estimates:
\begin{enumerate}
\item If $r_1\leq r_2$, then $e^{\theta \mathcal{L}}r_1\leq e^{\theta \mathcal{L}}r_2$.
\item \begin{itemize}
\item[i)]If $r\in W^{1, \infty}(\R)$, then $\|\nabla( e^{\theta \mathcal{L}}r)\|_{L^\infty}\leq Ce^{\frac{\theta}{2}}\|\nabla r\|_{L^\infty}$.
\item[ii)] If $r\in L^\infty(\R)$, then $\|\nabla( e^{\theta \mathcal{L}}r)\|_{L^\infty}\leq \frac{Ce^{\frac{\theta}{2}}}{\sqrt{1-e^{-\theta}}}\|\ r\|_{L^\infty}$.
\end{itemize}
\item For $m\geq 0$, if $|r(x)|\leq \mu (1+|x|^m)$, $\forall x\in \R$, then
\begin{itemize}
\item[i)] $|e^{\theta \mathcal{L}}r(y)|\leq C\mu e^\theta (1+|y|^m)$, $\forall y\in \R$.
\item[ii)] $|\nabla(e^{\theta \mathcal{L}}r(y))|\leq C\mu \frac{e^\frac{\theta}{2}}{\sqrt{1-e^{-\theta}}} (1+|y|^m)$, $\forall y\in \R$.
\end{itemize}
\item For $m\geq 0$, if $|\nabla r(x)|\leq \mu (1+|x|^m)$, $\forall x\in \R$, then   $|\nabla(e^{\theta \mathcal{L}}r(y))|\leq C\mu e^\frac{\theta}{2} (1+|y|^m)$,  $\forall y\in \R$.
\item For $0<m<1$, we have
\begin{itemize}
\item[i)]  If $(1+|y|^m)r\in W^{1,\infty}(\R)$, then
 $$\|(1+|y|^m)\nabla (e^{\theta \mathcal{L}}r)(y)\|_{L^\infty}\leq C e^{\frac{(m+1)\theta}{2}} \|(1+|y|^m)\nabla r\|_{L^\infty}.$$
\item[ii)]  If $(1+|y|^m)r\in L^\infty(\R)$, then $$ \|(1+|y|^m)\nabla (e^{\theta \mathcal{L}}r)(y)\|_{L^\infty}\leq C \frac{e^\frac{(m+1)\theta}{2}}{\sqrt{1-e^{-\theta}}}  \|(1+|y|^m)r\|_{L^\infty}.$$
\end{itemize}
\end{enumerate}
\end{lem}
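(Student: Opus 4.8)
We plan to prove Lemma~\ref{lem_kernel1} directly from the explicit Mehler-type kernel \eqref{kernel_e}. The starting observation is that $\mathcal L=(\Delta-\tfrac12 y\cdot\nabla)+1$, so $e^{\theta\mathcal L}=e^{\theta}\,e^{\theta(\mathcal L-1)}$, where $e^{\theta(\mathcal L-1)}$ is the Ornstein--Uhlenbeck semigroup whose kernel $p_\theta(y,x)=\frac1{\sqrt{4\pi(1-e^{-\theta})}}\exp\big[-\frac{(ye^{-\theta/2}-x)^2}{4(1-e^{-\theta})}\big]$ is, for each fixed $y$, the density of a Gaussian centered at $ye^{-\theta/2}$ with variance $2(1-e^{-\theta})\le 2$. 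In particular $\int_\R p_\theta(y,x)\,dx=1$, hence $\|e^{\theta\mathcal L}\|_{L^\infty\to L^\infty}=e^{\theta}$, and item~1 is immediate since $p_\theta\ge 0$ (positivity is preserved by linearity).

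Next I would record the commutation identity: from $\partial_y(\mathcal L v)=\mathcal L(\partial_y v)-\tfrac12\partial_y v$ one gets $\partial_y e^{\theta\mathcal L}=e^{-\theta/2}e^{\theta\mathcal L}\partial_y$. This yields item~2(i) via $\|\nabla(e^{\theta\mathcal L}r)\|_{L^\infty}\le e^{-\theta/2}\|e^{\theta\mathcal L}\nabla r\|_{L^\infty}\le Ce^{\theta/2}\|\nabla r\|_{L^\infty}$, and item~4 by applying item~3(i) to the function $\nabla r$: $|\nabla(e^{\theta\mathcal L}r)(y)|=e^{-\theta/2}|e^{\theta\mathcal L}(\nabla r)(y)|\le C\mu e^{\theta/2}(1+|y|^m)$. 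For item~2(ii), where $r$ is only bounded, I would differentiate the kernel, $\partial_y p_\theta(y,x)=-\frac{e^{-\theta/2}(ye^{-\theta/2}-x)}{2(1-e^{-\theta})}p_\theta(y,x)$, bound $|r|$ by its sup norm, and use that the first absolute moment of the Gaussian is of order $\sqrt{1-e^{-\theta}}$; together with the factor $e^{\theta}$ this produces the announced $\frac{Ce^{\theta/2}}{\sqrt{1-e^{-\theta}}}$. Item~3(i) follows from positivity: $|e^{\theta\mathcal L}r(y)|\le \mu e^{\theta}\big(1+\int p_\theta(y,x)|x|^m\,dx\big)$, and since the Gaussian is centered at $ye^{-\theta/2}$ with variance $\le 2$ and $|a+b|^m\le C_m(|a|^m+|b|^m)$ for $m\ge 0$, the integral is $\le C(1+|ye^{-\theta/2}|^m)\le C(1+|y|^m)$; item~3(ii) is the same after differentiating the kernel as in 2(ii).

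The delicate point is item~5, and everything reduces to the following uniform (in $\theta$, equivalently in the variance $v\in(0,2]$) weight-transfer estimate for a Gaussian density $G_a$ on $\R$ centered at $a$ with variance $v$, valid for $m\ge 0$:
$$\int_\R\frac{G_a(x)}{1+|x|^m}\,dx\le\frac{C}{1+|a|^m},\qquad \int_\R\frac{|x-a|}{1+|x|^m}\,G_a(x)\,dx\le\frac{C\sqrt v}{1+|a|^m}.$$
These are proved by splitting each integral over $\{|x|\ge|a|/2\}$, where $1+|x|^m\ge c(1+|a|^m)$, and over $\{|x|<|a|/2\}$, where $|x-a|\ge|a|/2$ forces the Gaussian tail $\le Ce^{-c|a|^2/v}\le Ce^{-c|a|^2}\le C'(1+|a|^m)^{-1}$ (the extra $\sqrt v$ in the second inequality coming from the substitution $u=(x-a)/\sqrt v$); the key point is that a smaller variance only improves concentration, so the constants are uniform. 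For item~5(i) I would use the commutation identity, write $|\nabla r(x)|\le\|(1+|y|^m)\nabla r\|_{L^\infty}/(1+|x|^m)$, apply the first estimate with $a=ye^{-\theta/2}$ and $v=2(1-e^{-\theta})$, and absorb the weight by $1+|ye^{-\theta/2}|^m\ge e^{-m\theta/2}(1+|y|^m)$, which produces the gain $e^{m\theta/2}$; combined with $e^{-\theta/2}\cdot e^{\theta}=e^{\theta/2}$ this gives $Ce^{(m+1)\theta/2}$. For item~5(ii), since $r$ need not be differentiable, I would differentiate the kernel directly as in 2(ii), bound $|r(x)|\le\|(1+|y|^m)r\|_{L^\infty}/(1+|x|^m)$, apply the second estimate with the same $a$ and $v$, and combine the prefactor $\frac{e^{-\theta/2}e^{\theta}}{1-e^{-\theta}}$ with the gain $\sqrt{1-e^{-\theta}}$ from the first moment and the weight gain $e^{m\theta/2}$, obtaining $\frac{Ce^{(m+1)\theta/2}}{\sqrt{1-e^{-\theta}}}$. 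Thus the main obstacle is establishing the two Gaussian weight-transfer inequalities uniformly in the variance; once these are in hand, all items of the lemma are a matter of bookkeeping with explicit Gaussian moments and the commutation identity.
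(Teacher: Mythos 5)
Your proposal is correct, and for items 1)--4) it actually does more than the paper, which simply cites Lemma 4 of \cite{bricmont} and Lemma 4.15 of \cite{TZ}; your short arguments (positivity of the Mehler kernel, the commutation $\partial_y e^{\theta\mathcal L}=e^{-\theta/2}e^{\theta\mathcal L}\partial_y$, kernel differentiation plus Gaussian moments) are consistent with those references. For item 5), the only part the paper proves (in its Appendix A), your route differs in one technical device. The paper changes variables $z=ye^{-\theta/2}-x$, moves the derivative onto $r$ (for 5(i)) or differentiates the kernel (for 5(ii)), and then absorbs the weight through the pointwise inequality $|y|^m\le Ce^{\frac{m\theta}{2}}\bigl(|ye^{-\theta/2}-z|^m+|z|^m\bigr)$, controlling the first piece by the weighted norm $\|(1+|y|^m)\nabla r\|_{L^\infty}$ (resp. $\|(1+|y|^m)r\|_{L^\infty}$) and the second by the plain sup norm together with Gaussian moments of order $m$ (and then using $\|\cdot\|_{L^\infty}\le\|(1+|y|^m)\cdot\|_{L^\infty}$). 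You instead keep the weight on $r$ pointwise, prove the Gaussian weight-transfer inequalities $\int G_a(x)(1+|x|^m)^{-1}dx\le C(1+|a|^m)^{-1}$ and its first-moment variant uniformly in the variance $v\in(0,2]$ (by the same split $|x|\ge|a|/2$ versus $|x|<|a|/2$ that appears elsewhere in the paper, e.g. in the proof of Lemma \ref{lem_kernel2}), and then convert $1+|ye^{-\theta/2}|^m\ge e^{-\frac{m\theta}{2}}(1+|y|^m)$, which reproduces exactly the factors $e^{\frac{(m+1)\theta}{2}}$ and $e^{\frac{(m+1)\theta}{2}}/\sqrt{1-e^{-\theta}}$. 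Your version costs you an auxiliary lemma but is slightly more systematic (it works verbatim for any $m\ge0$ and keeps everything in terms of the weighted norm only), while the paper's splitting is more direct and avoids the domain decomposition; the bookkeeping of constants is identical in both.
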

\begin{proof}
Because  estimates $1)-4)$ are not new, we refer the reader to Lemma $4.15$ page $5926$ in \cite{TZ}, See also  pages $554-555$ in \cite{bricmont}.\\
Thus, we only prove  $5)$. In order  to avoid unnecessary  technicalities here, we prove it  in  Appendix A.
\end{proof}
\bigskip
We are now going to prove  Proposition \ref{prop-reg}.
%\begin{proof_prop3.5}~\\
\begin{proof}[\textbf{Proof of Proposition \ref{prop-reg}}]
Both estimates in \eqref{reg} follow by the same technique as in Proposition 3.3 page 14 in \cite{EZ} and Proposition 4.17 page 5929 in \cite{TZ}. For that reason, we only prove the second, since it is a little more delicate.\\ 
 Let $K_0\ge 1$,  $A\geq 1$, $s_0\geq 1$ and  consider $v(s)$ a solution of equation (\ref{eq_v}) defined on $[s_0, s_1]$, where $s_1\geq s_0\geq 1$ and $v(s_0)=\psi$ defined in (\ref{ci}) with $(d_0, d_1)\in \mathcal{D}_{s_0}$ defined in Proposition \ref{prop_ci}. We assume in addition that $v(s)\in \mathcal{V}_{ A}(s)$, for all $s\in [s_0, s_1]$.\\
We distinguish two cases.\\
{\bf{\underline{Case 1:}}} If $s\leq s_0+1$. Let $s'_1=\min(s_0+1, s_1)$ and take $s\in [s_0, s'_1]$. Then we have for any  $t\in [s_0, s]$, $$s_0\leq t\leq s\leq s_0+1\leq 2s_0, \; \mbox{hence}\quad  \frac{1}{s}\leq \frac{1}{t}\leq \frac{1}{s_0}\leq \frac{2}{s}. $$
From equation $(\ref{eq_v})$, we write for any $s\in [s_0,s_1']$,
\begin{equation}\label{eq_int_v_bis}
 v(s)=e^{(s-s_0)\mathcal{L}}v(s_0)+\int_{s_0}^s e^{(s-t)\mathcal{L}} F(t)dt,
 \end{equation}
where $$F(y,t)=Vv(y,t)+B(v)+R(y,t)+N(y,t).$$
From Lemma \ref{lem_kernel1}, we see that for all $s\in [s_0, s'_1]$
\begin{eqnarray}\label{3.54}
\|(1\!+\!|y|^\beta)\nabla v(s)\|_{L^\infty}\!\!\!\!&\leq& \!\!\|(1\!+\!|y|^\beta)\nabla  e^{(s-s_0)\mathcal{L}}v(s_0)\|_{L^\infty}\!\!+\!\!\!\int_{s_0}^s\! \| (1\!+\!|y|^\beta)\nabla e^{(s-t)\mathcal{L}} F(t)\|_{L^\infty}dt\nonumber \\
\!\!&\leq &\!\! C \|(1\!+\!|y|^\beta)\nabla v(s_0)\|_{L^\infty}\!\!+\!C\!\int_{s_0}^s \frac{\|(1+|y|^\beta)F(t)\|_{L^\infty}}{ \sqrt{1-e^{-(s-t)}}}dt.
\end{eqnarray}
Using Proposition \ref{prop_ci}, we obtain
$$ \|(1+|y|^\beta)\nabla v(s_0)\|_{L^\infty}\leq \frac{CA}{s_0^{2-\frac{\beta}{2}}}.$$
Since $s\leq 2s_0$, we deduce that
\begin{equation}\label{3.54.5}
 \|(1+|y|^\beta)\nabla v(s_0)\|_{L^\infty}\leq \frac{CA}{s^{2-\frac{\beta}{2}}}.
\end{equation}
We now estimate $\|(1+|y|^\beta)F(t)\|_{L^\infty}$: we write
$$(1+|y|^\beta)F(y,t)=(1+|y|^\beta)(Vv(y,t)+B(v)+R(y,t))+(1+|y|^\beta)N(y,t).$$
From Lemma \ref{lem_est},  we see that $R$, $B(v)\in \mathcal{V}_{ C}(t)$, and $Vv\in \mathcal{V}_{ CA}(t)$.
Therefore by Proposition \ref{prop_shrin}, we have
\begin{equation}
\|(1+|y|^\beta)(Vv+B(v)+R)\|_{L^\infty}\leq C\frac{A^2}{t^{\frac{1-\beta}{2}}}.
\end{equation}
Furthermore, using  inequality \eqref{new_term_int}, \eqref{nabla_phi},  \eqref{gphi}  and  \eqref{I}, we see that
\begin{equation*}
\|(1+|y|^\beta)N\|_{L^\infty}\leq C e^{-\gamma t}\Big(\frac{C}{t^{\frac{1-\beta}{2}}}+\|(1+|y|^\beta)\nabla v\|_{L^\infty}\Big) t^\alpha,
\end{equation*}
for some $\alpha>0$.\\
If we assume $s_0$ large enough, and consider   $t\in [s_0, s]$, then we have

\begin{equation}
\|(1+|y|^\beta)N\|_{L^\infty}\leq C e^{-\frac{\gamma t}{2}}\Big(\frac{1}{t^{\frac{1-\beta}{2}}}+\|(1+|y|^\beta)\nabla v\|_{L^\infty}\Big).
\end{equation}
Collecting all these bounds  and using the fact that  $t\leq s \leq 2t$, we obtain   for all $t\in [s_0, s]$,
\begin{equation}\label{3.57}
\|(1+|y|^\beta)F(t)\|_{L^\infty}\leq C\frac{A^2}{s^{\frac{1-\beta}{2}}}+ C e^{-\frac{\gamma s}{2}}\|(1+|y|^\beta)\nabla v\|_{L^\infty}.
\end{equation}
Therefore, using $(\ref{3.54})$ and \eqref{3.54.5}, then taking $s_0$ large enough, we write 
\begin{equation*}
\|(1+|y|^\beta)\nabla v(s)\|_{L^\infty}\leq  C\frac{A}{s^{2-\frac{\beta}{2}}}+ C\frac{A^2}{s^{\frac{1-\beta}{2}}}+ C e^{-\frac{\gamma s}{2}}\int_{s_0}^s\frac{\|(1+|y|^\beta)\nabla v\|_{L^\infty}}{\sqrt{1-e^{-(s-t)}}}dt.
\end{equation*}
Using a Gronwall's argument, we obtain that for $s_0$ large enough
\begin{equation*}
\|(1+|y|^\beta)\nabla v(s)\|_{L^\infty}\leq  C\frac{A^2}{s^{\frac{1-\beta}{2}}}, \quad \forall s\in [s_0, s'_1].
\end{equation*}
{\bf{\underline{Case 2:}}} If $s> s_0+1$. Take $s\in ]s_0+1, s_1]$ and rewrite
equation $(\ref{eq_int_v_bis})$ for any $s'\in (s-1, s]$ (use the fact that  $s\geq s_0+1 \geq 2$ and $s=s-1+1\leq 2(s-1)$).\\
More precisely, we rewrite
$$ v(s')=e^{(s'-s+1)\mathcal{L}}v(s-1)+\int_{s-1}^{s'} e^{(s'-t)\mathcal{L}} F(t)dt.$$
From Lemma \ref{lem_kernel1}, we see that for all $s\in [s_0, s'_1]$
\begin{eqnarray}
\|(1\!+\!|y|^\beta)\nabla v(s')\|_{L^\infty}\!\!\!\!&\leq&\! \!\!\|(1\!+\!|y|^\beta)\nabla  e^{(s'\!-\!s\!+\!1)\mathcal{L}}v(s\!-\!1)\|_{L^\infty}\!\!+\!\!\int_{s\!-\!1}^{s'} \!\!\| (1\!+\!|y|^\beta)\nabla e^{(s'\!-\!t)\mathcal{L}} F(t)\|_{L^\infty}dt\nonumber \\
&\leq & \!\!C\frac{\|(1+|y|^\beta)v(s-1)\|_{L^\infty}}{\sqrt{1-e^{-(s'-s+1)}}} \!+\!C\int_{s-1}^{s'} \frac{\|(1+|y|^\beta)F(t)\|_{L^\infty}}{ \sqrt{1-e^{-(s'-t)}}}dt.
\end{eqnarray}
Since $v\in \mathcal{V}_{ A}$ and $\frac{s'}{2}<s-1<s'<s$, we have from Proposition \ref{prop_ci}
$$\|(1+|y|^\beta)v(s-1)\|_{L^\infty}\leq \frac{CA^2}{(s-1)^{\frac{1-\beta}{2}}}\leq \frac{CA^2}{(s')^{\frac{1-\beta}{2}}},$$
and from \eqref{3.57},
\begin{eqnarray*}\|(1+|y|^\beta)F(t)\|_{L^\infty}&\leq& C \frac{A^2}{t^{\frac{1-\beta}{2}}}+Ce^{-\frac{\gamma t}{2}}\|(1+|y|^\beta)\nabla v(t)\|_{L^\infty}\\
&\leq &  C \frac{A^2}{(s')^{\frac{1-\beta}{2}}}+Ce^{-\frac{\gamma s'}{2}}\|(1+|y|^\beta)\nabla v(t)\|_{L^\infty}.
\end{eqnarray*}
Therefore,
\begin{equation*}
\|(1+|y|^\beta)\nabla v(s')\|_{L^\infty}\leq  C\frac{A^2}{(s')^{\frac{1-\beta}{2}}\sqrt{1-e^{-(s'-s+1)}}}+ C e^{-\frac{\gamma s'}{2}}\int_{s-1}^{s'}\frac{\|(1+|y|^\beta)\nabla  v\|_{L^\infty}}{\sqrt{1-e^{-(s'-t)}}}dt.
\end{equation*}
Using a Gronwall's argument, we see that
\begin{equation*}
\|(1+|y|^\beta)\nabla v(s')\|_{L^\infty}\leq 2 C\frac{A^2}{(s')^{\frac{1-\beta}{2}}\sqrt{1-e^{-(s'-s+1)}}}, \quad \forall s'\in [s-1, s].
\end{equation*}
Taking $s'=s$, we conclude the proof of Proposition \ref{prop-reg}.
%\end{proof_prop3.5}
\end{proof}
\subsubsection{Reduction to a finite dimensional problem}\label{sub_red}
This subsection is crucial in the proof of our result. It is dedicated to the proof of  Proposition \ref{prop_red}.  In this subsection, we reduce the problem to a finite dimensional one. We prove through a priori estimates that the control of $v(s)$ in $\mathcal{V}_{ A}(s)$ is reduced to the control of $(v_0, v_1)(s)$ in $[-\frac{A}{s^2}, \frac{A}{s^2}]^2$.\\
 For this end,  we project equation $(\ref{eq_v})$ on the different components of the decomposition $(\ref{decomp})$ and we get new bounds on all components of $v$:
\begin{pro}\label{pro_red}
There exists $K_6$ such that for any $K_0\ge K_6$, there exists  $A_6\geq 1$ such that for all $A\geq A_6$, there exists $s_{0,6}(K_0, A)$ large enough such that the following holds for all $s_0\geq s_{0,6}(K_0, A)$:\\
Assume that for some $s_1\geq \sigma\geq s_0$, we have
$$ v(s)\in \mathcal{V}_{ A}(s), \quad \mbox{for all} \; s\in [\sigma, s_1], $$
and that $\nabla v$ satisfies the estimates stated in Proposition \ref{prop-reg}.
Then, the following holds for all $s\in [\sigma, s_1]$,
\begin{enumerate}
\item[i)] (ODE satisfied by the positive mode):
For $m\in \{0,1\}$, we have
$$\displaystyle|v'_m(s)-(1-\frac{m}{2}) v_m(s)|\leq \frac{C}{s^2}.$$
\item[ii)] (ODE satisfied by the null mode):
We have
$$|v'_2(s)+\frac{2}{s}v_2(s)|\leq\frac{C}{s^3}.$$
\item[iii)] (Control of the negative and outer modes): We have
\begin{eqnarray*}
\displaystyle \|\frac{v_{-}(s)}{1+|y|^3}\|_{L^\infty}\!\!\!&\leq&\!\!\! Ce^{-\frac{s-\sigma}{2}}\|\frac{v_{-}(\sigma)}{1+|y|^3}\|_{L^\infty}\!+\!C\frac{e^{-(s-\sigma)^2}}{s^{\frac{3}{2}}}\|v_e(\sigma)\|_{L^\infty}\!+ \!C\frac{1+s-\sigma}{s^2},\\
 \|v_{e}(s)\|_{L^\infty}\!\!\!\!\!\!&\leq& \!\!\!Ce^{-\frac{s-\sigma}{p}}\|v_e(\sigma)\|_{L^\infty}\!+ \!Ce^{s-\sigma}s^{\frac{3}{2}}\|\frac{v_{-}(\sigma)}{1+|y|^3}\|_{L^\infty}\!+ \!C\frac{1+(s-\sigma)e^{s-\sigma}}{\sqrt{s}}.
\end{eqnarray*}
\item[iv)](Control of the term outside the blow-up area in the new functional space):
\begin{eqnarray*}
 \displaystyle \|(1+|y|^\beta)v_{e}(s)\|_{L^\infty}\!\!\!&\leq& \!\!\!C \displaystyle e^{-\frac{s-\sigma}{2}(\frac{1}{p-1}-\frac{\beta}{2})} \|(1+|y|^\beta)v_{e}(\sigma)\|_{L^\infty}+ Ce^{\frac{\beta}{2}(s-\sigma)}s^{\frac{3}{2}+\frac{\beta}{2}}\|\frac{v_{-}(\sigma)}{1+|y|^3}\|_{L^\infty}\\&&+ C\frac{1+(s-\sigma)e^{\frac{\beta}{2}(s-\sigma)}}{s^{\frac{1-\beta}{2}}}.
\end{eqnarray*}
\end{enumerate}

\end{pro}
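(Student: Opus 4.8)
The plan is to start from the integral formulation $(\ref{eq_int_v})$ of equation $(\ref{eq_v})$ and to project it, or the differential equation $(\ref{eq_v})$ itself, onto the five components of the decomposition $(\ref{decomp})$. For the three finite-dimensional modes $v_m=P_m(\chi v)$, $0\le m\le 2$, projecting $(\ref{eq_v})$ gives
\begin{equation*}
v_m'(s)=\Big(1-\frac m2\Big)v_m(s)+P_m\big(\chi\,(Vv+B(v)+R+N)\big)+\tilde v_m(s),
\end{equation*}
where $\tilde v_m$ gathers the commutator terms coming from the fact that $\chi=\chi_0(|y|/(K_0\sqrt s))$ depends on $s$ and that multiplication by $\chi$ does not commute with $\mathcal{L}$; all such terms are supported in the annulus $K_0\sqrt s\le|y|\le 2K_0\sqrt s$, carry at least one factor $1/\sqrt s$ from $\nabla\chi$ or $\partial_s\chi$, and are therefore $O(1/s^2)$ once the bounds defining $\mathcal{V}_A(s)$ are used on that annulus. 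For the infinite-dimensional modes $v_-$ and $v_e$ I will instead use the integral form $(\ref{eq_int_v})$, split the Duhamel kernel $K(s,t)$ according to the inner/outer decomposition, use that it is comparable to $e^{(s-t)\mathcal{L}}$ in the blow-up region and to $e^{(s-t)(\mathcal{L}-\frac p{p-1})}$ outside, and rely on the semigroup estimates of Lemma \ref{lem_kernel1} together with the classical splitting of $V$ into a part small in $L^2_\rho$ and a part close to $-p/(p-1)$ at infinity.

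For part i) the eigenvalue $1-m/2$ with $m\in\{0,1\}$ is positive, and each remaining contribution is $O(1/s^2)$: $R,B(v)\in\mathcal{V}_C(s)$ by Lemma \ref{lem_est}, $|N_m(s)|\le Ce^{-\gamma s/2}$ by Proposition \ref{prop_new_term}, the potential term satisfies $|(Vv)_m|\le C/s^2$ after using $\|V(s)\|_{L^2_\rho}\to0$ together with $v(s)\in\mathcal{V}_A(s)$, and the commutator terms $\tilde v_m$ are $O(1/s^2)$ as above; summing yields the claim. Part ii) is the delicate point of the finite-dimensional part: since the eigenvalue vanishes, one must extract the next order. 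I would expand $V(y,s)=p\varphi^{p-1}-p/(p-1)$ in the blow-up region using $(\ref{n_profile})$ and the expansion $(\ref{f})$ of $f$; this produces a term of size $1/s$ acting precisely along the $h_2$ direction, whose projection gives the coefficient $-2/s$ in front of $v_2(s)$, the residual being $O(1/s^3)$. Here I follow verbatim the computation of \cite{MZ97} and \cite{TZ}, checking only that the cut-off located at $|y|\sim\sqrt s$ (instead of $|y|\sim s^{(p+1)/(2(p-1))}$ there) and the extra term $\frac{\kappa N}{2ps}\chi_0(y/g_\eps(s))$ in $\varphi$ contribute only to the $O(1/s^3)$ remainder.

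For part iii), the contraction of $K(s,\sigma)$ on the negative subspace $\{h_i;\ i\ge3\}$ is $e^{-(s-\sigma)/2}$ since $1-3/2=-1/2$ is the top negative eigenvalue; the kernel also produces a coupling with $v_e(\sigma)$, carrying a Gaussian factor $e^{-(s-\sigma)^2}$ because $\mathrm{supp}\,v_e\subset\{|y|\ge K_0\sqrt s\}$ is far from where $v_-$ lives, and the Duhamel integral against $B(v)+R+N\in\mathcal{V}_C(t)$ contributes $C(1+s-\sigma)/s^2$ after integrating $e^{-(s-t)/2}/t^2$. Symmetrically, outside the blow-up region $\mathcal{L}+V$ behaves like $\mathcal{L}-\frac p{p-1}$, whose spectrum is $\le-\frac1{p-1}<0$; a loss in the constants and in the region where $V\approx-p/(p-1)$ is valid degrades the rate to $e^{-(s-\sigma)/p}$, and one adds the coupling with $v_-(\sigma)$ (with the polynomial factor $s^{3/2}$ from passing from the weighted norm of $v_-$ to an $L^\infty$ bound on the annulus) and the source terms, giving the second inequality.

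Part iv) is the genuinely new estimate and, together with the null-mode refinement, is the main obstacle. The goal is to propagate $\|(1+|y|^\beta)v_e(s)\|_{L^\infty}$. Working outside the blow-up region, I would use item $5$ of Lemma \ref{lem_kernel1} (proved in Appendix A): the weight $(1+|y|^\beta)$ is transported by $e^{\theta\mathcal{L}}$ at the cost of a factor of order $e^{(\beta+2)\theta/2}$; combined with the gain $e^{-\frac p{p-1}\theta}$ from the potential $-p/(p-1)$, the net rate is, up to a loss in the constant, of order $e^{-\frac{\theta}{2}(\frac1{p-1}-\frac\beta2)}$, a genuine decay precisely because $\beta<\frac2{p-1}$ (Remark \ref{rema}). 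Integrating the Duhamel term against the source bounds of Lemma \ref{lem_est}, Proposition \ref{prop_new_term} and the parabolic regularity of Proposition \ref{prop-reg} (needed because the new term $N$ involves $\nabla v$) yields the term $C(1+(s-\sigma)e^{\beta(s-\sigma)/2})/s^{(1-\beta)/2}$, and the coupling with $v_-(\sigma)$ produces the remaining term with factor $e^{\beta(s-\sigma)/2}s^{3/2+\beta/2}$. The main difficulty throughout is bookkeeping: separating the inner and outer regions consistently with the cut-offs of $v$, $\varphi$ and $R$, checking that the exponential growth factors $e^{\beta(s-\sigma)/2}$ and $e^{s-\sigma}$ in iii)--iv) are harmless once $\sigma$ is close to $s$ in the use of Proposition \ref{prop_red}, and verifying that the non-local term $N$, though it forces the use of Proposition \ref{prop-reg}, contributes only exponentially small quantities thanks to the factor $e^{-\gamma s}$.
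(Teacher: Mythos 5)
Your overall route is the paper's: for items i)--ii) you project the equation on $h_m$ and follow the standard computations of \cite{MZ97}, \cite{TZ} (the paper itself simply cites Lemma 3.10 of \cite{NZ2}), noting via Proposition \ref{prop_new_term} that the new term only contributes $O(e^{-\gamma s/2})$, which is indeed how the paper argues; for iii)--iv) you use the Duhamel formulation \eqref{eq_int_v} and kernel estimates of Bricmont--Kupiainen type, and your exponent bookkeeping for iv) (growth $e^{\theta}$ of the semigroup, cost $e^{\beta\theta/2}$ for transporting the weight, gain $e^{-\frac{p}{p-1}\theta}$ from the outer potential, net decay because $\beta<\frac2{p-1}$) reproduces exactly the paper's rate $e^{-\frac{s-\sigma}{2}(\frac1{p-1}-\frac\beta2)}$, as does your coupling term $e^{\frac\beta2(s-\sigma)}s^{\frac32+\frac\beta2}\|v_-(\sigma)/(1+|y|^3)\|_{L^\infty}$.

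The one genuine gap is the justification of that weighted outer estimate, which is precisely the new technical content of this proposition. You invoke item 5 of Lemma \ref{lem_kernel1}, but that item is a weighted \emph{gradient} estimate for the free semigroup $e^{\theta\mathcal L}$ (its role in the paper is Proposition \ref{prop-reg}), not a statement about the full kernel $K(s,\sigma)$ of $\mathcal L+V$ acting on functions supported in $\{|x|\ge K_0\sqrt\sigma\}$ against the decaying weight $(1+|x|^\beta)^{-1}$. Moreover one cannot simply multiply semigroup growth by a factor $e^{-\frac{p}{p-1}\theta}$ ``from the potential'': $V\approx-\frac p{p-1}$ only outside the blow-up region, while the kernel transports mass into $\{|y|\lesssim\sqrt s\}$ where $V=O(1/s)$, so even the unweighted outer decay of $K$ (estimate 3 of Lemma \ref{lem_kernel2}, due to \cite{bricmont}) is nontrivial, and its weighted version is new. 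The paper closes this gap with part 4 of Lemma \ref{lem_kernel2} and part 3 of Lemma \ref{lem_14}, proved through the Feynman--Kac representation \eqref{feynm} and the dichotomy $|ye^{-\theta/2}-x|\ge\frac{|x|}4$ or not, treating separately the inner contribution $K(s,\sigma)v_b(\sigma)$, which is what produces the term $e^{\frac\beta2(s-\sigma)}s^{\frac\beta2}\bigl(\sum_{l\le2}s^{l/2}|v_l(\sigma)|+s^{3/2}\|v_-(\sigma)/(1+|y|^3)\|_{L^\infty}\bigr)$. Your sketch identifies the right estimate and the right exponents, but asserts rather than proves this kernel bound; to complete the argument you need such a lemma (or an equivalent), and Lemma \ref{lem_kernel1} will not substitute for it.
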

\begin{proof}
Note that estimates $i)-iii)$ are stated in \cite{TZ} for an equation lacking the new term $N$. Since the new term $N$ satisfies Proposition \ref{prop_new_term} and Lemma \ref{lem_new_term}, the reader can adapt easily the proof of \cite{TZ} to  the new situation. For this reason, we only prove the estimate $iv)$.\\
We write the integral form of equation \eqref{eq_v}:
\begin{eqnarray}\label{decomp1}
v(s)&=&K(s, \sigma) v(\sigma)+\int_\sigma^s K(s,t)(B(v(t))+R(t)+N(t))dt.\nonumber\\
&=& \mathcal{A}(s)+\mathcal{B}(s)+\mathcal{C}(s)+\mathcal{D}(s).
\end{eqnarray}
The proof is given in three steps: In the first step, we need to
understand the behavior of the Kernel $K(s, \sigma)$, which plays an
important role in estimating the new components of $v$. In the second
step, we use these estimates to give new bounds on different terms
appearing in $(\ref{decomp1})$. In the third step, we conclude the
proof.

\bigskip

{\bf{Step 1: Linear estimates on the operator $\mathcal L+V$.}}\\
It is clear that the kernel $K(s,\sigma)$ has  stronger influence   in this formula. For this reason,  it is convenient to give the following result of Bricmont and Kupiainen
 \cite{bricmont} (Note that estimate $4)$ is new, crucial and ours):

 \begin{lem}\label{lem_kernel2} We have the following estimates:
 \begin{enumerate}
 \item For all  $1\leq \sigma \leq s \leq 2\sigma$, $x, y\in \R$, $K(s, \sigma, y, x)\leq C e^{(s-\sigma)\mathcal{L}}(y,x)$.
 \item For all $m\geq 0$,  $1\leq \sigma \leq s \leq 2\sigma$, $y\in \R$,  we have
$$|\displaystyle\int K(s, \sigma, y,x)(1+|x|^m)dx|\leq Ce^{(s-\sigma)}(1+|y|^m).$$
 \item For all   $\eta>0$, there exists $K_1(\eta)$ such that for all $K_0\geq K_1(\eta)$,  for all $\sigma \ge 1$ and $s\in [\sigma, 2\sigma ]$ and   for all $y\in \R$, we have
 $$\displaystyle|\int K(s, \sigma, y,x){\bf{1}}_{\{|x|\geq K_0\sqrt{\sigma}\}} dx|\leq C e^{-(\frac{1}{p-1}-\eta)(s-\sigma)}.$$
 \item For all $0\leq m <\frac{2}{p-1}$,  there exists $K_2(m)$ such that for all $K_0\geq K_2(m)$, for all $\rho>0$ there exists $s_2(K_0, \rho)\ge 1$ such that for any $\sigma \ge s_2(K_0, \rho)$ and $s\in [\sigma, \sigma+\rho]$, we have
  $$\displaystyle|\int \frac{K(s, \sigma, y,x)}{1+|x|^m}{\bf{1}}_{\{|x|\geq K_0\sqrt{\sigma}\}} dx|\leq C\frac{e^{-\frac{1}{2}(\frac{1}{p-1}-\frac m2)(s-\sigma)}}{1+|y|^m}.$$
 \end{enumerate}
 \end{lem}
 We give the proof of the above Lemma in the Appendix B. Using this result, we obtain the following:
 \begin{lem}\label{lem_14} There exsits $K_{10}\ge 1$ such that for all $K_0 \ge K_{10}$, 
for all $\rho >0$, there exists $\sigma_0=\sigma_0(K_0,\rho)$ such that if $\sigma\geq \sigma_0\geq 1$ and $g(\sigma)$ satisfies
 $$\displaystyle \sum_{m=0}^2 |g_m(\sigma)|+\|\frac{g_{-}(y, \sigma)}{1+|y|^3}\|_{L^\infty}+\|( 1+|y|^\beta)g_e(\sigma)\|_{L^\infty}<+\infty,$$
 then $\theta(s)=K(s, \sigma)g(\sigma)$ satisfies for all $s\in [\sigma,\sigma+\rho]$,
 \begin{enumerate}
 \item \begin{eqnarray*}\displaystyle \|\frac{\theta_-(y,s)}{1+|y|^3}\|_{L^\infty}&\leq &C\frac{e^{s-\sigma}((s-\sigma)^2+1)}{s}(|g_0(\sigma)|+|g_1(\sigma)|+\sqrt{s}|g_2(\sigma)|)\\&&+Ce^{-\frac{s-\sigma}{2}}\|\frac{g_-(y,\sigma)}{1+|y|^3}\|_{L^\infty} +C\frac{e^{-(s-\sigma)^2}}{s^{\frac{3}{2}}}\|g_e(\sigma)\|_{L^\infty}
 \end{eqnarray*}
 \item  $|\theta_e(s)|\leq \displaystyle Ce^{s-\sigma}\big(\sum_{l=0}^2 s^{\frac{l}{2}}|g_l(\sigma)|+s^{\frac{3}{2}}\|\frac{g_{-}(y, \sigma)}{1+|y|^3}\|_{L^\infty}\big)+Ce^{-\frac{s-\sigma}{p}}\|g_e(\sigma)\|_{L^\infty}.$
 \item \begin{eqnarray*}\displaystyle \|(1+|y|^\beta)\theta_e(y,s)\|_{L^\infty}&\leq &\displaystyle C e^{-\frac{s-\sigma}2(\frac 1{p-1}-\frac \beta 2)}\|(1+|y|^\beta)g_e(y,\sigma)\|_{L^\infty}\\
&&+ \displaystyle C  e^{\frac{\beta}{2}(s-\sigma)} s^\frac{\beta}{2}(\sum_{l=0}^2 s^{\frac{l}{2}}|g_l(\sigma)| +s^{\frac{3}{2}}\|\frac{g_-(y,\sigma)}{1+|y|^3}\|_{L^\infty}).
 \end{eqnarray*}
 \end{enumerate}
 \end{lem}
 \begin{proof} Although not written exactly in this form, items $1-2$ were already proved in Bricmont and Kupiainen \cite{bricmont}. In fact, we owe this version to Nguyen and Zaag \cite{NZ2} (see Lemma 3.6 page 1290 in that paper). We therefore give the sketch of the proof only  for part $3)$. We write
 $$(1+|y|^\beta)\theta_e(y,s)=(1+|y|^\beta)(1-\chi(y,s)) K(s, \sigma)(g_e(\sigma)+g_b(\sigma))$$
 For the first term, we remark that
 $$ K(s, \sigma)g_e(\sigma)= \int\frac{K(s,\sigma, y,x)}{1+|x|^\beta}{\bf{1}}_{\{|x|\geq K_0\sqrt{\sigma}\}}(1+|x|^\beta)g_e(x, \sigma)dx$$
Then, using  part $4)$  for Lemma \ref{lem_kernel2}, we obtain
$$\|(1+|y|^\beta)(1-\chi(y,s)) K(s, \sigma)g_e(\sigma)\|_{L^\infty}\leq Ce^{-\frac{s-\sigma}{2}(\frac{1}{p-1}-\frac{\beta}{2})}\|(1+|y|^\beta)g_e(\sigma)\|_{L^\infty}.$$
For the second term, we use a Feynman-Kac representation for $K$:
\begin{equation}\label{feynm}
K(s, \sigma, y, x)=e^{(s-\sigma)\mathcal{L}}(y,x)\int d\nu_{yx}^{s-\sigma}(\omega)\exp \int_0^{s-\sigma}V(\omega(t), \sigma+t)dt,
\end{equation}
where $$\displaystyle e^{(s-\sigma)\mathcal{L}}(y,x)= \frac{e^{s-\sigma}}{\sqrt{4\pi(1-e^{-(s-\sigma)})}} \exp(\frac{-(y e^{-\frac{(s-\sigma)}{2}}-x)^2}{4(1-e^{-(s-\sigma)})}), $$
and $ d\nu_{yx}^{s-\sigma}$  is the oscillator measure on the continuous paths $\omega : [0, s-\sigma] \to    \R$
with $\omega(0) = x$, $ \omega(s -\sigma) = y$, i.e. the Gaussian probability measure with
covariance kernel
\begin{eqnarray*}
\displaystyle
\Gamma(\tau, \tau')&=&\omega_0(\tau) \omega_0(\tau')\\
&+&\displaystyle 2\Big( e^{-\frac{1}{2}|\tau -\tau'|}-e^{-\frac{1}{2}|\tau +\tau'|}+e^{-\frac{1}{2}|2(s-\sigma)+\tau -\tau'|}-e^{-\frac{1}{2}|2(s-\sigma)-\tau -\tau'|}\Big),
\end{eqnarray*}
which yields $\int d\nu_{yx}^{s-\sigma}\omega(\tau)= \omega_0(\tau),$ with 
$$\omega_0(\tau)= (sinh((s-\sigma)/2))^{-1}(y\, sinh(\frac{\tau}{2})+ x \, sinh(\frac{s-\sigma-\tau}{2})\Big).$$
Since $V(y,s) \le \frac C s$ by \eqref{linear}, we readily see that for $\sigma$ large enough, we have  
$$\int d\nu_{yx}^{s-\sigma}(\omega)\exp \int_0^{s-\sigma}V(\omega(t), \sigma+t)dt\leq C. $$
We distinguish two cases:\\
{\bf{Case 1:}} If $x\in \mathbb{A}=\{x; \quad |ye^{-\frac{s-\sigma}{2}}-x|\geq \frac{|x|}{4}\}$, we first write
\begin{equation*}
\exp(\frac{-|ye^{\frac{-(s-\sigma)}{2}}-x|^2}{4(1-e^{-\theta})})=\exp(\frac{-|ye^{\frac{-(s-\sigma)}{2}}-x|^2}{8(1-e^{-(s-\sigma)})})\exp(\frac{-|ye^{\frac{-(s-\sigma)}{2}}-x|^2}{8(1-e^{-(s-\sigma)})}).
\end{equation*}
On the one hand, we remark that since $\displaystyle \frac{|x|}{4}\geq \frac{|ye^{-\frac{(s-\sigma)}{2}}|}{4}-\frac{|ye^{-\frac{(s-\sigma)}{2}}-x|}{4}$, we have $\displaystyle |ye^{-\frac{(s-\sigma)}2}-x|\ge \frac{|x|}4 \ge \frac{|ye^{-\frac{(s-\sigma)}2}|}4 - \frac{|ye^{-\frac{(s-\sigma)}2}-x|}4$, hence
$$|ye^{-\frac{(s-\sigma)}{2}}-x|\geq \frac{|ye^{-\frac{(s-\sigma)}{2}}|}{5}. $$
Thus, \begin{eqnarray*}
\displaystyle \exp(-\frac{|y e^{-\frac{(s-\sigma)}{ 2}} -x|^2}{8(1-e^{-(s-\sigma)})})
& \le&\displaystyle Ce^{-\frac{|ye^{-\frac{(s-\sigma)}{ 2}}|^2}{200(1-e^{-(s-\sigma)})}}
\le \displaystyle Ce^{-\frac{|ye^{-\frac{(s-\sigma)}{ 2}}|^2}{200}} \nonumber \\
&\le&\displaystyle \frac{ C}{1+|ye^{-\frac{ (s-\sigma)}{ 2}}|^\beta} = \frac{Ce^{\frac{\beta(s-\sigma)}{2}}}{e^{\frac{\beta(s-\sigma)}{2}}+|y|^\beta}
\le \frac{Ce^{\frac{\beta(s-\sigma)}{2}}}{1+|y|^\beta}.
\end{eqnarray*}
On the other hand, we write
$$\exp(\frac{-(y e^{-\frac{(s-\sigma)}{2}}-x)^2}{8(1-e^{-(s-\sigma)})})\leq  \exp(-\frac{ |x|^2}{128(1-e^{-(s-\sigma)})}).$$
We conclude that
 $$(1\!+\!|y|^\beta)(1\!-\!\chi(y,s))\!\!\!\int_\mathbb{A}\!\!\! K(s,\sigma, y, x){\bf{1}}_{\{|x|\leq 2K_0\sqrt{\sigma}\}}dx\! \leq\!  C \frac{e^{(1\!+\!\frac\beta 2)(s-\sigma)}}{\sqrt{4\pi (1\!-\!e^{\!-\!(s\!-\!\sigma)})}}\!\!\int_\R\!\! e^{-\frac{|x|^2}{128(1\!-\!e^{\!-\!(s\!-\!\sigma)})}}dx . $$
By change of variables, we obtain
\begin{eqnarray*}
(1+|y|^\beta)(1-\chi(y,s))\int_\mathbb{A} K(s,\sigma, y, x){\bf{1}}_{\{|x|\leq 2 K_0\sqrt{\sigma}\}}dx &\leq & C e^{(1+\frac{\beta}{2})(s-\sigma)}\int_\R e^{-x'^2}dx'\\
&\le& C e^{(1+\frac{\beta}{2})(s-\sigma)}.\end{eqnarray*}
Let  $\rho>0$ and $\sigma$ large enough  such that  $e^{s-\sigma}\le e^\rho\le \sigma^\frac{\beta}{2}\leq s^\frac{\beta}{2}$, hence
$$(1+|y|^\beta)(1-\chi(y,s)) \int_\mathbb{A}K(s,\sigma, y,x){\bf{1}}_{\{|x|\leq 2K_0\sqrt{\sigma}\}}  dx\leq C e^{\frac{\beta}{2}(s-\sigma)} s^\frac{\beta}{2} .$$
{\bf{Case 2:}} If $x\in \mathbb{R}\backslash \mathbb{A}$ where
$\mathbb A$ is defined in Case $1$, we have $\displaystyle |ye^{-\frac{(s-\sigma)}2}|\le |ye^{-\frac{(s-\sigma)}2}-x|+|x|\le \frac{5|x|}4$. Therefore, if $|x|\le 2 K_0 \sqrt \sigma$, then $|y|\le \frac52e^{\frac{(s-\sigma)}2}K_0 \sqrt \sigma$, hence,   we obviously obtain $1+|y|^\beta \leq 1+(\frac{5}{2}K_0)^\beta e^{\frac{\beta}{2}(s-\sigma)} s^\frac{\beta}{2}. $
$$(1+|y|^\beta)(1-\chi(y,s)) \int_{\mathbb{R}\backslash\mathbb{A}}K(s,\sigma, y,x) {\bf{1}}_{\{|x|\leq 2K_0\sqrt{\sigma}\}}  dx\leq  C  e^{\frac{\beta}{2}(s-\sigma)} s^\frac{\beta}{2}. $$
Collecting the above estimates, we obtain for $\sigma$ large enough
$$\|(1+|y|^\beta)(1-\chi(y,s)) K(s,\sigma) g_b(\sigma)\|_{L^\infty}\leq  C   e^{\frac{\beta}{2}(s-\sigma)} s^\frac{\beta}{2}\|g_b(\sigma)\|_{L^\infty}.$$
Using \eqref{decomp}, we write
$$g_b(x,\sigma) = \sum_{l=0}^2 g_l(\sigma)h_l(x) +g_-(x,\sigma). $$
Since $|x|\le 2 K_0 \sqrt{\sigma}$ on the support of $g_b(x,\sigma)$, it follows that
$$\|g_b(\sigma)\|_{L^\infty} \le C\sum_{l=0}^2 s^{\frac{l}{2}}|g_l(\sigma)| +Cs^{\frac{3}{2}}\|\frac{g_-(y,s)}{1+|y|^3}\|_{L^\infty},$$
for large $\sigma$.
Thus, we have 
$$\|(1+|y|^\beta)(1-\chi(y,s)) K(s,\sigma) g_b(\sigma)\|_{L^\infty}\leq  C   e^{\frac{\beta}{2}(s-\sigma)} s^\frac{\beta}{2}\Big(\sum_{l=0}^2 s^{\frac{l}{2}}|g_l(\sigma)| +s^{\frac{3}{2}}\|\frac{g_-(y,s)}{1+|y|^3}\|_{L^\infty}\Big).$$
This concludes the proof of Lemma \ref{lem_14}.
\end{proof}
{\bf{Step 2: Linear estimates on the terms in \eqref{decomp1}.}}\\
Applying Lemma \ref{lem_14}, we get  a new bound  on  all the  terms in the decomposition $(\ref{decomp1})$. More precisely, we have the following:
\begin{lem}\label{lem15} There exists $K_7$ such that for any $K_0\ge
  K_7$, there exists  $A_7>0$ such that for all $A\geq A_7$ and
  $\rho>0$, there exists $s_{07}(K_0, A, \rho)>0$ with the following
  property: for all $s_0\geq s_{07}(K_0, A, \rho)$ assume that for all
  $s\in [\sigma, \sigma+\rho]$, $v(s)$ satisfies $(\ref{eq_v})$,
  $v(s)\in \mathcal{V}_{ A}(s)$ and $\nabla v$ satisfies the estimates
  stated in Proposition  \ref{prop-reg}.
Then, we have:
\begin{enumerate}
\item Linear term:
$$\|\frac{\mathcal{A}_-(y,s)}{1+|y|^3}\|_{L^\infty}\leq Ce^{-\frac{1}{2}(s-\sigma)}\|\frac{v_-(y,\sigma)}{1+|y|^3}\|_{L^\infty}+C\frac{e^{-(s-\sigma)^2}}{s^{\frac{3}{2}}}\|v_e(\sigma)\|_{L^\infty} +\frac{C}{s^2}, $$

$$\|\mathcal{A}_e(s)\|_{L^\infty}\leq Ce^{-\frac{(s-\sigma)}{p}}\|v_e(\sigma)\|_{L^\infty}+ Ce^{s-\sigma}s^{\frac{3}{2}}\|\frac{v_-(y,\sigma)}{1+|y|^3}\|_{L^\infty}+\frac{C}{\sqrt{s}}.$$
 \begin{eqnarray*}
\|(1+|y|^\beta)\mathcal{A}_e(s)\|_{L^\infty}&\leq& C e^{-\frac {(s-\sigma)}2 (\frac 1{p-1}-\frac\beta2)}\| (1+|y|^\beta)v_e(\sigma)\|_{L^\infty}+ C  e^{\frac{\beta}{2}(s-\sigma)} s^{\frac{\beta}{2}+\frac{3}{2}}\|\frac{v_-(y,s)}{1+|y|^3}\|_{L^\infty}\\&&+\frac{C}{s^{\frac{1-\beta}{2}}}.
 \end{eqnarray*}
 \item Nonlinear source term:
 $$\|\frac{\mathcal{B}_-(y,s)}{1+|y|^3}\|_{L^\infty}\leq \frac{C}{s^2}(s-\sigma), \quad  \|\mathcal{B}_e(s)\|_{L^\infty}\leq \frac{C}{\sqrt{s}}(s-\sigma)e^{s-\sigma},$$
 $$\|(1+|y|^\beta)\mathcal{B}_e(s)\|_{L^\infty}\leq C\frac{1+ e^{\frac{\beta}{2}(s-\sigma)} (s-\sigma)}{s^{\frac{1-\beta}{2}}}.$$
 \item Corrective term:
 $$\|\frac{\mathcal{C}_-(y,s)}{1+|y|^3}\|_{L^\infty}\leq \frac{C}{s^2}(s-\sigma), \quad  \|\mathcal{C}_e(s)\|_{L^\infty}\leq \frac{C}{\sqrt{s}}(s-\sigma)e^{s-\sigma},$$
 $$\|(1+|y|^\beta)\mathcal{C}_e(s)\|_{L^\infty}\leq C\frac{1+ e^{\frac{\beta}{2}(s-\sigma)} (s-\sigma)}{s^{\frac{1-\beta}{2}}}.$$
 \item New term:
 $$\|\frac{\mathcal{D}_-(y,s)}{1+|y|^3}\|_{L^\infty}\leq C(s-\sigma)e^{s-\sigma}e^{-\frac{\gamma}{2}s}, \quad  \|\mathcal{D}_e(s)\|_{L^\infty}\leq C(s-\sigma)e^{s-\sigma}e^{-\frac{\gamma}{2}s},$$
 $$\|(1+|y|^\beta)\mathcal{D}_e(s)\|_{L^\infty}\leq C(s-\sigma)e^{-\frac{\gamma}{2}s}(1+ e^{\frac{\beta}{2}(s-\sigma)} s^\frac{\beta}{2}).$$
\end{enumerate}
\end{lem}
\begin{proof} 
Since for all $s\in[\sigma, \sigma+\rho]$, $v(s) \in {\cal V}_A(s)$ and $\nabla v(s)$ satisfies the estimates in Proposition \ref{prop-reg}, using Lemma  
\ref{lem_est} and  Proposition \ref{prop_new_term}, we see that $v(\sigma)\in {\cal V}_A(\sigma)$, $B(v)\in {\cal V}_C(s)$, $R(s)\in  {\cal V}_C(s)$ and $N(s)\in V_C(s)$. Therefore, we can apply the linear estimates of Lemma \ref{lem_14} to obtain the desired
estimates (after an integration in time, if necessary). One may see Lemma 4.20 page 5940 in \cite{TZ} for a similar argument.\\
In fact, for the reader's convenience, we will present the proof for
$\mathcal D(s)= \int_\sigma^sK(s, t) N(t)dt$, since this is the new
term in our study and since it shows an exponentially decaying factor
which cannot be obtained from a mechanical application of the
above-mentioned lemmas.\\
Using Lemma \ref{lem_kernel2} and Lemma \ref{lem_new_term}, we deduce that
$$|\mathcal D(s)|\leq C(s-\sigma)e^{s-\sigma}e^{-\frac{\gamma}{2}s}.$$
In particular, for $0\leq i \leq 2$, arguing as for $N_i(s)$ in the proof of Proposition \ref{prop_new_term} page \pageref{proof3.10}, we get 
$$|\mathcal D_i(s)|\leq C(s-\sigma)e^{s-\sigma}e^{-\frac{\gamma}{2}s}.$$
Then, we write
\begin{eqnarray*}
|\mathcal D_-(s)|&=&|\mathcal D_b(s)-\sum_{i=0}^2 \mathcal D_i(s) k_i(y)|\\
&\leq& C(s-\sigma)e^{s-\sigma}e^{-\frac{\gamma}{2}s}(1+|y|+|y|^2) \\
&\leq& C(s-\sigma)e^{s-\sigma}e^{-\frac{\gamma}{2}s}(1+|y|^3).
\end{eqnarray*}
Also, we see that
$$|\mathcal D_e(s)|=|(1-\chi(y, s)) \mathcal D(s)|\leq |\mathcal D(s)|\leq C(s-\sigma)e^{s-\sigma}e^{-\frac{\gamma}{2}s}. $$
Finally, using Lemma \ref{lem_14} Part $3)$,  Proposition \ref{prop_new_term} and  Lemma \ref{lem_new_term}, we write 
\begin{eqnarray*}
\|(1+|y|^\beta)\mathcal D_e(s)\|_{L^\infty}&\leq& \int_\sigma^s \|(1+|y|^\beta)(1-\chi(y,s))K(s, t, y, x) N(x)\|_{L^\infty} dt\\
&\leq & C  \int_\sigma^s e^{-\frac{s-t}2(\frac 1{p-1}-\frac \beta 2)}\|(1+|y|^\beta)N_e(y)\|_{L^\infty}  dt\\
&&+  C \displaystyle  \int_\sigma^s  e^{\frac{\beta}{2}(s-t)} s^\frac{\beta}{2} (\sum_{l=0}^2 s^{\frac{l}{2}}\|N_l(t)\|_{L^\infty}+s^{\frac{3}{2}} \|\frac{N_-(y)}{1+|y|^3}\|_{L^\infty})  dt.
\end{eqnarray*}
This yields part $4$ and concludes the proof of Lemma \ref{lem15}.
\end{proof}
{\bf{Step 3 : Conclusion of the proof of Proposition \ref{pro_red}.}}\\ Thanks to the integral equation \eqref{decomp1}, we clearly see
 that items iii) and iv) follow by adding all the estimates in Lemma \ref{lem15}. It remains then to justify items i) and ii),
which we give in the following.\\
Without the perturbation term in \eqref{eq_u}, i.e. when $\mu=0$, the proof can be found in any paper about the standard 
semilinear heat equation, for example in Lemma 3.10 page 1293 of Nguyen and Zaag \cite{NZ2}.\\
With the perturbation term, i.e. when $\mu \neq 0$, we see from Proposition \ref{prop_new_term} that the contribution of the new term 
is exponentially small, hence, under the error terms in the ODEs shown in i)-ii). 
This  concludes the proof of Proposition \ref{pro_red}.
 \end{proof}
 Let us now give the proof of  Proposition \ref{prop_red}.
\begin{proof}[\textbf{Proof of  Proposition \ref{prop_red}}]
 %\begin{proof_prop3.4}\\
  We sketch only the proof of part i), since part $ii)$ follows from item i) of Proposition \ref{pro_red} exactly as in \cite{TZ} (see item ii) of Proposition 4.6 page $5914$ in that paper).\\ Let $v$ be a solution of equation $(\ref{eq_v})$ with initial data $\psi_{s_0, d_0, d_1}$ given by $(\ref{ci})$ with $(d_0, d_1)\in \mathcal{D}_{s_0}$ defined in Proposition \ref{prop_ci}, such that $v(s)\in \mathcal{V}_{ A}(s)$ for all $s\in [s_0, s_1]$ with $v(s_1)\in \partial\mathcal{V}_{ A}(s_1)$. By Definition \ref{def_shrinking} of the shrinking set, in order to conclude, it is enough   to prove that
 \begin{eqnarray}\label{3.61}
 |v_2(s_1)|<\displaystyle \frac{A^2 \log s_1}{s_1^2}, \quad \|\frac{v_-(y,s_1)}{1+|y|^3}\|_{L^\infty}< \frac{A}{s_1^2}\\
 \|v_e(s_1)\|_{L^\infty}< \frac{A^2}{\sqrt{s_1}}, \quad \| (1+|y|^\beta)v_e(s_1)\|_{L^\infty}<\frac{A^2}{s_1^{\frac{1-\beta}{2}}}.\nonumber
 \end{eqnarray}
 We prove only the last inequality, and refer  the reader to \cite{TZ} (precisely to item i) of Proposition 4.6 there) for the proof of the other estimates.\\
We will in fact choose $\rho$ of the form  $\rho=\log(A^\alpha)$, $\alpha<1$. Then, taking $s_0$ large enough, we may have the following
 \begin{equation}\label{rho}
 \rho \leq s_0<\sigma<s=\sigma +\rho<2\sigma.
 \end{equation}
 Let us first assert that the parabolic regularity result stated in Proposition \ref{prop-reg}  holds. In particular, we can apply all the statements requiring its conclusion, in particular Proposition \ref{pro_red}.\\
 We distinguish two cases:\\
 {\bf{Case 1:}} $s>s_0+\rho$. Hence, $\sigma=s-\rho>s_0$ and from
 Proposition \ref{pro_red} part $iv)$ together with the fact
that $v(\sigma)
\in {\cal V}_A
(\sigma)$, we have
 \begin{eqnarray*}
  \| (1+|y|^\beta)v_e(s)\|_{L^\infty}&\leq& C e^{-\frac {(s-\sigma)}2(\frac 1{p-1} - \frac \beta 2)} \| (1+|y|^\beta)v_e(\sigma)\|_{L^\infty}+C e^{\frac{\beta}{2}(s-\sigma)} s^{\frac{\beta}{2}+\frac{3}{2}}\|\frac{v_-(y,\sigma)}{1+|y|^3}\|_{L^\infty}\\
  &&+C\frac{1+(s-\sigma) e^{\frac{\beta}{2}(s-\sigma)} }{s^{\frac{1-\beta}{2}}}\\
  &\leq &  \displaystyle  C\frac{A^2 e^{-\frac {\rho}2(\frac 1{p-1} - \frac \beta 2)}+Ae^{\frac{\rho \beta}{2}}+1+\rho e^{\frac{\rho\beta }{2}}}{s^{\frac{1-\beta}{2}}}\\
  &\leq &   C\frac{A^{2-\frac {\alpha}2(\frac 1{p-1} - \frac \beta 2)}+A^{1+\frac{\alpha \beta}{2}}+1+\log(A^\alpha)A^{\frac{\alpha\beta }{2}}}{s^{\frac{1-\beta}{2}}}.
 \end{eqnarray*}
 Since $\alpha<1$ and $\beta<\frac{2}{p-1}<1$ by \eqref{hyp} and \eqref{beta},  taking $A$ sufficiently large, we see that
 $$\| (1+|y|^\beta)v_e(s)\|_{L^\infty}\leq \frac{1}{2}\frac{A^2}{s^{\frac{1-\beta}{2}}}. $$
 As we said earlier, the same argument allows to show the other estimates in $(\ref{3.61})$ with the same choice of
$\rho = \log(A^\alpha)$ (see item i) of Proposition 4.6 page $5914$ in \cite{TZ}).\\
 {\bf{Case 2:}} $s<s_0+\rho$. Clearly, from this choice, we have $s_0<s<s_0+\rho<2s_0$, by \eqref{rho}.
 If we choose $\sigma=s_0$, then $v_e(s_0)=\psi_e(s_0)=0$ by item $ii)$ of Proposition \ref{prop_ci}. Using  Proposition \ref{pro_red} part $iv)$, we have
   $$\| (1+|y|^\beta)v_e(s)\|_{L^\infty}\leq C\frac{ e^{\frac{\rho \beta}{2}}+1+\rho e^{\frac{\rho \beta}{2}} }{s^{\frac{1-\beta}{2}}}. $$
Taking $A$ sufficiently large, such that $$C(A^{\frac{\alpha\beta}{2}}+1+\log(A^\alpha)A^{\frac{\alpha\beta}{2}})\leq \frac{A^2}{2},$$
 we conclude that for all $s\in [s_0, s_1]$
   $$\| (1+|y|^\beta)v_e(s)\|_{L^\infty}\leq \frac{1}{2}\frac{A^2}{s^{\frac{1-\beta}{2}}}. $$
Again, the other estimates in \eqref{3.61} hold also with the same proof (see item i) of Proposition  4.6 page $5914$ in \cite{TZ}).\\
   In particular, in  the two cases, we have
   $$\| (1+|y|^\beta)v_e(s_1)\|_{L^\infty}<\frac{A^2}{s_1^{\frac{1-\beta}{2}}}, $$
   and \eqref{rho} holds.\\
 Since $v(s_1)\in \partial\mathcal{V}_{ A}(s_1)$, we see that $(v_0(s_1), v_1(s_1))\in \displaystyle \partial [-\frac{A}{s_1^2}, \frac{A}{s_1^2}]^2$ and part $i)$ of Proposition \ref{prop_red} is proved. Since item ii) follows by the same argument as item ii) of Proposition  4.6 page $5914$ in \cite{TZ}, this concludes the proof of Proposition \ref{prop_red}.
   % \end{proof_prop3.4}
\end{proof}
 \section{Proof of Theorem \ref{th1} and Corollary \ref{corol}}
  In this section, we prove our main result, using the previous subsections. We recall that, from Proposition \ref{prop-reg} and Proposition \ref{exis}, we obtain the existence of a solution $v$ of equation $(\ref{eq_v})$ defined for all $y\in \R$ and $s\geq s_0$, for some $s_0\ge 1$ such that $v(s)\in \mathcal{V}_{A}(s)$. More precisely, we have
  $$\|(1+|y|^\beta)v(s)\|_{L^\infty}+\|(1+|y|^\beta)\nabla v(s)\|_{L^\infty}\leq \displaystyle \frac{CA^2}{s^{\frac{1-\beta}{2}}}.$$
 Thus, by definition \eqref{def_v} of $v$, we have for all $s\ge s_0$,
 \begin{equation}\label{est}
\|( 1+|y|^\beta)(w(y,s)-\varphi(y,s))\|_{L^\infty}+\|(1+|y|^\beta)\nabla (w(y,s)-\varphi(y,s))\|_{L^\infty}\leq \displaystyle \frac{C}{s^{\frac{1-\beta}{2}}},
 \end{equation}
where $\varphi$ is the profile introduced in $(\ref{n_profile})$. Let us first estimate this profile. We give the following Lemma:
 \begin{lem}\label{Lem_profile}
 \begin{enumerate}
 \item For all $K_0>0$ and $y$ such that $|y|\geq 2K_0\sqrt{s}$, we have
 \begin{itemize}
 \item[i)]  $|\displaystyle f(\frac{y}{\sqrt{s}})|\sim (\frac{s}{b|y|^{2}})^{\frac{1}{p-1}}$ as $\displaystyle \frac{y}{\sqrt{s}}\to +\infty$.
 \item[ii)] $\displaystyle |\frac{1}{\sqrt{s}}f'(\frac{y}{\sqrt{s}})|\leq \frac{C}{(1+|y|^\beta)s^{\frac{1-\beta}{2}}}$.
 \end{itemize}
\item For all $y\in \R$,
\begin{itemize}
\item[i)] $\displaystyle|\frac{\kappa}{2ps}\chi_0(\frac{y}{g_\eps(s)})|\leq \frac{C}{(1+|y|^\beta)s^{\frac{1-\beta}{2}}}$.
\item[ii)] $\displaystyle|\frac{\kappa}{2ps}\frac{1}{g_\eps(s)}\chi'_0(\frac{y}{g_\eps(s)})|\leq \frac{C}{(1+|y|^\beta)s^{\frac{1-\beta}{2}}}$.
\end{itemize}
 \end{enumerate}
 \end{lem}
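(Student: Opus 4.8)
The plan is to establish the four estimates one at a time; each reduces to an elementary computation with $f$ and $\chi_0$ together with careful bookkeeping of the powers of $s$ and $|y|$, the only structural inputs being the constraint $|y|\ge 2K_0\sqrt s$ in part~1 and the admissible ranges $0<\eps<\min(1,\tfrac{p-1}{4})$, $\beta<\tfrac2{p-1}$ in part~2. For part~1~i) I would simply substitute $z=y/\sqrt s$ into the asymptotic expansion \eqref{f} of $f$: since $f(z)\sim b^{-\frac1{p-1}}z^{-\frac2{p-1}}$ as $z\to\infty$, taking $z=|y|/\sqrt s$ gives $f(y/\sqrt s)\sim b^{-\frac1{p-1}}(|y|/\sqrt s)^{-\frac2{p-1}}=(s/(b|y|^2))^{\frac1{p-1}}$ as $|y|/\sqrt s\to\infty$, which is exactly the claim.

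For part~1~ii) I would differentiate $f(z)=(p-1+bz^2)^{-\frac1{p-1}}$ to get $f'(z)=-\tfrac{2b}{p-1}z(p-1+bz^2)^{-\frac p{p-1}}$, so that for $z\ge 2K_0\ge 2$ one has $|f'(z)|\le Cz^{-\frac{p+1}{p-1}}\le Cz^{-\beta}$, the last inequality simply because $z\ge1$ and $\tfrac{p+1}{p-1}=1+\tfrac2{p-1}>\beta$. Taking $z=y/\sqrt s$ then yields
$$\tfrac1{\sqrt s}\bigl|f'(y/\sqrt s)\bigr|\le \frac{C}{\sqrt s}\Bigl(\frac{|y|}{\sqrt s}\Bigr)^{-\beta}=C\,\frac{s^{\frac{\beta-1}{2}}}{|y|^\beta}\le \frac{C}{(1+|y|^\beta)s^{\frac{1-\beta}{2}}},$$
where in the last step I use $|y|\ge 2K_0\sqrt s\ge 2$ so that $1+|y|^\beta\le 2|y|^\beta$.

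For part~2 I would exploit that $\chi_0$ is bounded with $\mathrm{supp}\,\chi_0\subset B(0,2)$, and that $\chi_0'$ is bounded with support contained in $\{1\le|z|\le2\}$ (since $\chi_0\equiv1$ on $B(0,1)$). Writing $g_\eps(s)=s^{\frac12+\eps}$, this gives $\bigl|\tfrac{\kappa}{2ps}\chi_0(y/g_\eps(s))\bigr|\le \tfrac Cs$ on $\{|y|\le 2g_\eps(s)\}$ and $\bigl|\tfrac{\kappa}{2ps\,g_\eps(s)}\chi_0'(y/g_\eps(s))\bigr|\le \tfrac C{s^{3/2+\eps}}$ on $\{g_\eps(s)\le|y|\le 2g_\eps(s)\}$, both vanishing off those supports. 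On the respective supports one has $1+|y|^\beta\le C s^{\beta(\frac12+\eps)}$ for $s\ge1$, so the two claimed bounds follow as soon as $\beta(\tfrac12+\eps)+\tfrac{1-\beta}{2}\le 1$ and $\beta(\tfrac12+\eps)+\tfrac{1-\beta}{2}\le \tfrac32+\eps$; the first reduces to $\eps\beta\le\tfrac12$, which holds because $\eps<\tfrac{p-1}4$ and $\beta<\tfrac2{p-1}$ force $\eps\beta<\tfrac12$, while the second is immediate since $\eps\beta<\eps$.

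I do not expect a genuine obstacle here, as the whole lemma is elementary; the only point requiring a little attention is the exponent bookkeeping in part~1~ii), where one must notice that the crude monotonicity bound $z^{-\frac{p+1}{p-1}}\le z^{-\beta}$ — valid precisely because $\beta<\tfrac2{p-1}$ — together with $z=|y|/\sqrt s$ and $|y|\ge 2K_0\sqrt s$ already produces exactly the power $s^{-\frac{1-\beta}{2}}$ and the weight $(1+|y|^\beta)^{-1}$ demanded by the statement, so no finer asymptotics of $f'$ are needed.
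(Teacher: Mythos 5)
Your proof is correct and follows essentially the same route as the paper: part 1 comes from the explicit form and asymptotics of $f$ and $f'$ evaluated at $z=y/\sqrt s\ge 2K_0$, and part 2 from the compact supports of $\chi_0$, $\chi_0'$ together with the observation that $\eps\beta<\tfrac12$ (from $\eps<\tfrac{p-1}{4}$ and $\beta<\tfrac{2}{p-1}$). The only cosmetic point is your use of $2K_0\ge 2$ and $|y|\ge 2$, which presumes $K_0\ge 1$ and $s\ge 1$; for general $K_0>0$ the same inequalities hold with a $K_0$-dependent constant, which the paper's convention allows.
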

 \begin{proof}
 For the proof of $1)$, we recall from \eqref{profile} that $f(z)=(p-1+bz^2)^{-\frac{1}{p-1}}$, where $z=\displaystyle \frac{y}{\sqrt{s}}$. Note that item $i)$ follows from  a Taylor expansion.\\
 We now present the proof of item $ii)$. We write
 \begin{eqnarray*}\frac{1}{\sqrt{s}}f'(\frac{y}{\sqrt{s}})&=&-\frac{2b}{p-1}\frac{y}{s}(p-1+b\frac{y^2}{s})^{-\frac{p}{p-1}}\\
 &=&-\frac{2b^{\frac{1}{p-1}}}{p-1}\frac{1}{\sqrt{s}}z^{-\frac{p+1}{p-1}}(1+\frac{p-1}{bz^2})^{-\frac{p}{p-1}}.
 \end{eqnarray*}
 Since $|z|\geq 2K_0$ and $\beta<\frac{2}{p-1}$ by \eqref{beta}, we have
 $$|(1+|y|^\beta)\frac{1}{\sqrt{s}}f'(\frac{y}{\sqrt{s}})|\leq C\frac{|z|^{\beta-\frac{p+1}{p-1}}}{s^{\frac{1-\beta}{2}}}.$$
 Thus, $ii)$ is proved.\\
 For the proof of $2)$, we see that
 $$|(1+|y|^\beta)\frac{1}{s}\chi_0(\frac{y}{g_\eps(s)})|\leq \frac{C}{s}(1+|y|^\beta){\bf{1}}_{\{|y|\leq 2K_0 g_\eps(s)\}}\leq \frac{C}{s} |g_\eps(s)|^\beta=\frac{C}{s^{1-\frac{\beta}{2}-\beta\eps}}. $$
 Since $\eps<\frac{1}{2\beta}$ by \eqref{beta} and the line right before \eqref{def_v}, we get the conclusion of $2)$ $i)$. The  proof of $ii)$ follows exactly as above.
 This concludes the proof of Lemma \ref{Lem_profile}.
 \end{proof}

 \bigskip

 Now, we give the proof of Theorem \ref{th1}.

 %\medskip

 \begin{proof}[\textbf{Proof of Theorem \ref{th1}}]
% \begin{proof_th1}
 Using the above Lemma and inequality  $(\ref{est})$, we deduce that
\begin{equation*}
\|(1+|y|^\beta)(w(y,s)-f(\frac{y}{\sqrt{s}}))\|_{L^\infty}+\|(1+|y|^\beta)\nabla_y (w(y,s)-f(\frac{y}{\sqrt{s}}))\|_{L^\infty}\leq \displaystyle \frac{C}{s^{\frac{1-\beta}{2}}}.
\end{equation*}
In particular, by the similarity variables transformation \eqref{var_sim},  the solution $u$ of equation $(\ref{eq_u})$ defined for all $x\in \R$ and $t\in [0, T)$ satisfies
\begin{equation*}
|(T-t)^{\frac{1}{p-1}}u(x,t)-f(\frac{x}{\sqrt{(T-t)|\log(T-t)|}})|\leq \displaystyle \frac{C}{(1+(\frac{|x|^2}{T-t})^\frac{\beta}{2})|\log(T-t)|^{\frac{1-\beta}{2}}},
\end{equation*}
hence $\lim_{t\to T} (T-t)^{\frac{1}{p-1}}u(0, t)=(p-1)^\frac{1}{p-1}$ and $u$ blows up at time $t=T$ at the origin.\\
On the other hand, we remark that
\begin{eqnarray*}
|(T-t)^{\frac{1}{2}+\frac{1}{p-1}}\nabla u(x,t)&-&\frac{1}{\sqrt{|\log(T-t)|}} \nabla f(\frac{x}{\sqrt{(T-t)|\log(T-t)|}})|\\
&&\leq \displaystyle \frac{C}{(1+(\frac{|x|^2}{T-t})^\frac{\beta}{2})|\log(T-t)|^{\frac{1-\beta}{2}}}.
\end{eqnarray*}
This concludes the proof of Theorem \ref{th1}.
%\end{proof_th1}
\end{proof}
Now,  let us briefly explain the  proof of Corollary  \ref{corol}.
\begin{proof}[\textbf{Proof of Corollary  \ref{corol}}]
%\begin{proof_cor}~\\
From Theorem \ref{th1} and Lemma \ref{Lem_profile}, we obtain 
$$\displaystyle u(x,t)= (\frac{|\log(T-t)|}{b|x|^2})^{\frac{1}{p-1}}(1+O(1))+O\Big((\frac{|\log(T-t)|}{b|x|^2})^{\frac{\beta}{2}}\frac{(T-t)^{\frac{\beta}{2}-\frac{1}{p-1}}}{ |\log(T-t)|^{\frac{1}{2}}}\Big),$$
and 
$$\displaystyle \nabla u(x,t)= c(\frac{|\log(T-t)|}{|x|^{p+1}})^{\frac{1}{p-1}}(1+O(1))+O\Big( (\frac{|\log(T-t)|}{b|x|^2})^{\frac{\beta}{2}}\frac{(T-t)^{\frac{\beta}{2}-\frac{1}{p-1}-\frac{1}{2}}}{|\log(T-t)|^{\frac{1}{2}}}\Big),$$
for some $c\neq 0$, as $\displaystyle \frac{|x|}{\sqrt{(T-t)|\log(T-t)|}} \to \infty$, uniformly in $t\in [0,T)$.\\
Since $\displaystyle \frac{|\log(T-t)|}{|x|^2}< \frac{1}{ (T-t) |\log(T-t)|^{(\frac{2}{p-1}-\beta)^{-1}}}$ and $\beta<\frac{2}{p-1}$ by \eqref{beta}, we conclude the proof of Corollary \ref{corol}.
%\end{proof_cor}
\end{proof}

\appendix

\section{Regularizing effect of the operator $\cal L$ in the weighted space}
In this appendix, we give the proof of Lemma \ref{lem_kernel1}. As mentioned earlier, we will give the proof  of part $5)$ only, since the other estimates are  proved in  Lemma $4$ page $555$ of  \cite{bricmont} and Lemma      $4.15$ page $5926 $ of \cite{TZ}.\\
By definition $(\ref{kernel_e})$, we write
\begin{eqnarray}\label{A1}
(e^{\theta \mathcal{L}}r)(y)&=&\displaystyle \int_\R \frac{e^\theta}{\sqrt{4\pi (1-e^{-\theta})}}e^{-\frac{(ye^{-\frac{\theta}{2}}-x)^2}{4 (1-e^{-\theta})}} r(x) dx\\
&=&  \int_\R \frac{e^\theta}{\sqrt{4\pi (1-e^{-\theta})}}e^{\frac{-z^2}{4 (1-e^{-\theta})}} r(ye^{-\frac{\theta}{2}}-z) dz,\nonumber
\end{eqnarray}
where $z=ye^{-\frac{\theta}{2}}-x$, and
\begin{eqnarray*}
\nabla (e^{\theta \mathcal{L}}r)(y)
&=& \displaystyle \int_\R \frac{e^{\frac{\theta}{2}}}{\sqrt{4\pi (1-e^{-\theta})}}e^{\frac{-z^2}{4 (1-e^{-\theta})}} \nabla r(ye^{-\frac{\theta}{2}}-z) dz.
\end{eqnarray*}
Since 
\begin{equation}\label{A2}
|y|^m\leq C e^{\frac{m\theta}{2}}(|ye^{-\frac{\theta}{2}}-z|^m+|z|^m),
\end{equation}
we have
\begin{eqnarray*}
|(1+|y|^m)\nabla (e^{\theta \mathcal{L}}r)(y)|&\leq& C e^{\frac{(m+1)\theta}{2}}\Big(\int_\R (1+|ye^{-\frac{\theta}{2}}-z|^m)
\frac{ e^{\frac{-z^2}{4 (1-e^{-\theta})}}}{\sqrt{4\pi (1-e^{-\theta})}}\nabla r(ye^{-\frac{\theta}{2}}-z)  dz\\
&+&\int_\R |z|^m  \frac{e^{\frac{-z^2}{4 (1-e^{-\theta})}}}{\sqrt{4\pi (1-e^{-\theta})}}\nabla r(ye^{-\frac{\theta}{2}}-z)  dz\Big). \\
&\leq& C e^{\frac{(m+1)\theta}{2}}\Big(\|(1+|y|^m)\nabla r(y)\|_{L^\infty}\int_\R
\frac{ e^{\frac{-z^2}{4 (1-e^{-\theta})}}}{\sqrt{4\pi (1-e^{-\theta})}}  dz\\
&&+ \|\nabla r(y)\|_{L^\infty}\int_\R
\frac{ |z|^m e^{\frac{-z^2}{4 (1-e^{-\theta})}}}{\sqrt{4\pi (1-e^{-\theta})}}  dz\Big).
\end{eqnarray*}
We remark that, for $\alpha \geq 0$, we get
$$\int_\R
\frac{ |z|^\alpha e^{\frac{-z^2}{4 (1-e^{-\theta})}}}{\sqrt{4\pi (1-e^{-\theta})}}  dz\leq C (1-e^{-\theta})^{\frac{\alpha}{2}}\leq C. $$
This yields  part $i)$ of $5)$.\\ In order to prove the part $ii)$, we write from \eqref{A1}
$$\nabla(e^{\theta \mathcal{L}}r)(y)=\displaystyle \int_\R \frac{e^\theta}{\sqrt{4\pi (1-e^{-\theta})}}\frac{-e^{-\frac{\theta}{2}}(ye^{-\frac{\theta}{2}}-x)}{2(1-e^{-\theta})} e^{-\frac{(ye^{-\frac{\theta}{2}}-x)^2}{4 (1-e^{-\theta})}} r(x) dx.$$
If we make the change of variable $z=ye^{-\frac{\theta}{2}}-x$, we obtain
$$\nabla(e^{\theta \mathcal{L}}r)(y)=\displaystyle 2\int_\R \frac{e^{\frac{\theta}{2}}}{\sqrt{\pi} (4(1-e^{-\theta}))^\frac{3}{2}} ze^{-\frac{z^2}{4(1-e^{-\theta})}}  r(ye^{-\frac{\theta}{2}}-z ) dz.$$
In particular, using \eqref{A2}, we get 
\begin{eqnarray*}
|(1+|y|^m)\nabla(e^{\theta \mathcal{L}}r)(y)|\!\!\!&\leq&\!\!\! \!\! c\!\int_\R\! \frac{e^{\frac{(m+1)\theta}{2}}}{ (1\!-\!e^{-\!\theta})^\frac{3}{2}} |z|e^{\!-\!\frac{z^2}{4(1-e^{-\theta})}}(1+|ye^{-\!\frac{\theta}{2}}\!-\!z|^m)  r(ye^{-\frac{\theta}{2}\!}-\!z ) dz\\
\!\!\!&+& \!\!\! c\int_\R \frac{e^{\frac{(m+1)\theta}{2}}}{(1-e^{-\theta})^\frac{3}{2}} |z|^{m+1} e^{-\frac{z^2}{4(1-e^{-\theta})}} r(ye^{-\frac{\theta}{2}}-z ) dz.\\
\end{eqnarray*}
Using the fact that for all $\alpha\geq 0$, $\int_\R|\frac{z}{\sqrt{1-e^{-\theta}}}|^\alpha e^{-\frac{z^2}{4(1-e^{-\theta})}}dz\leq C$, we obtain
$$\|(1+|y|^m)\nabla(e^{\theta \mathcal{L}}r)(y)\|_{L^\infty} \leq \displaystyle C( \frac{e^{\frac{(m+1)\theta}{2}}}{\sqrt{1-e^{-\theta}}} \|(1+|y|^m)r(y)\|_{L^\infty}+\frac{e^{\frac{(m+1)\theta}{2}}(1-e^{-\theta})^{\frac{m}{2}}}{\sqrt{1-e^{-\theta}}} \|r(y)\|_{L^\infty}.$$
Thus, part $ii)$ of $5)$ is proved.\\

\section{Effect at infinity of the operator ${\cal L}+V$ in the weighted space}
We prove now  Lemma \ref{lem_kernel2}.
It happens that the operator $\mathcal{L}+V$ is a little different from the operator in \cite{bricmont}, in the sense that we have a  cut-off in our profile $\varphi$ defined in \eqref{n_profile} (just take $\chi_0\equiv 1$ in \eqref{n_profile} to exactly recover the operator in \cite{bricmont}). However, despite this difference,  the spectral analysis  in \cite{bricmont} holds here and one gets the proof  of
 estimates $1)$ and $ 2) $  similarly (see Lemma $4$ page $555$  in \cite{bricmont}
 together with the Feynman-Kac formula given in \eqref{feynm}). Thus, we only prove estimate $3)$ and $4)$.\\
 We would like to mention that  estimate 3) was proved  in \cite{bricmont} with a bound involving  $e^{-\frac{s-\sigma}{p}}$,  which means that our estimate in $3)$ is already proved  when $[\eta =\frac{1}{p-1}-\frac{1}{p}]$. It happens in fact that the proof of \cite{bricmont} holds for any $\eta>0$. Indeed, estimate $3)$ is in fact the formal translation of the fact that the linear operator $\mathcal{L}+V$ behaves like $\mathcal{L}-\displaystyle \frac{p}{p-1}Id$ as $s$ goes to infinity and $\frac{|y|}{\sqrt{s}}$ goes to  infinity, as we have already mentioned in Section $2$, and as we will recall here.\\
 Indeed, we have by definition of potential $V(y, s)$ in \eqref{linear} that
 $$ V(y, s) \to-\displaystyle \frac{p}{p-1} , \; \mbox{as}\; s\to \infty; \, \frac{|y|}{\sqrt{s}}\to \infty.$$
 Since we know  that the highest eigenvalue of $\mathcal{L}$ is $1$ (see \eqref{spect}), it is reasonable to think that  the operator   $\mathcal{L}+V$ has  its largest eigenvalue bounded  by
 $$1-\frac{p}{p-1}+\eta=-\frac{1}{p-1}+\eta, $$
where $\eta>0$ is arbitrarily small, which fully justifies our estimate $3)$.\\
For the proof, see Lemma $7$ page $559$ in \cite{bricmont}, which holds even with this more general version.\\

Let us now give the proof of $4)$. Consider $m\in [0,\frac 2{p-1})$ and introduce
\begin{equation}\label{defeta}
\eta=\frac 12\left( \frac 1{p-1}-\frac m2\right)>0.
\end{equation}
Consider also $K_0>0$, $\rho>0$ and $\sigma\ge 1$ to be taken large enough, and $s\in[\sigma, \sigma+\rho]$.\\
We recall from \eqref{feynm} the Feynman-Kac representation for $K$:

\begin{equation}\label{Kernel}
K(s,\sigma, y, x)= e^{\theta \mathcal{L}}(y,x) \int d\nu^\theta_{yx} \exp(\int_0^\theta V(w(t), \sigma +t) dt,
\end{equation}
where   $\theta=s-\sigma$ and  $e^{\theta \mathcal{L}}(y,x)= \frac{e^\theta}{\sqrt{4\pi (1-e^{-\theta})}}\exp(\frac{-|ye^{\frac{-\theta}{2}}-x|^2}{4(1-e^{-\theta})})$ and the meaure $d\nu_{yx}^\theta$ was 
introduced after \eqref{feynm}.\\
We distinguish two cases:\\
{\bf{Case 1:}} If $x\in \mathbb{A}=\{x,|ye^{-\frac{\theta}{2}}-x|\geq \frac{|x|}{4}  \}$.\\
We first remark that since $\displaystyle \frac{|x|}4 \ge \frac{|y e^{-\frac \theta 2}|}4 - \frac{|ye^{-\frac \theta 2} -x|}4$, we see that $$\displaystyle |ye^{-\frac\theta 2}-x|\ge \frac{|x|}4 \ge  \frac{|ye^{-\frac\theta 2}|}4 -\frac{|ye^{-\frac \theta 2}-x|}4,$$ hence 
\begin{equation}\label{new}
\displaystyle
|ye^{-\frac \theta 2} -x|\ge \frac{|ye^{-\frac \theta 2}|}5.
\end{equation}
Arguing as in the proof of Lemma \ref{lem_14}, we prove that
\begin{equation}\label{prob}
\int d\nu^\theta_{yx} \exp(\int_0^\theta V(w(t), \sigma +t) dt\leq C,
\end{equation}
provided that $\sigma\ge s_3(\rho)$ for large enough $s_3(\rho)$. Then, we decompose
\begin{equation}\label{dec_exp}
\exp(\frac{-|ye^{\frac{-\theta}{2}}-x|^2}{4(1-e^{-\theta})})=\exp(\frac{-|ye^{\frac{-\theta}{2}}-x|^2}{12(1-e^{-\theta})})\exp(\frac{-|ye^{\frac{-\theta}{2}}-x|^2}{12(1-e^{-\theta})})\exp(\frac{-|ye^{\frac{-\theta}{2}}-x|^2}{12(1-e^{-\theta})}).
\end{equation}
First, we remark  form the hypothesis of this case that when $|x|\geq K_0\sqrt{\sigma}$
\begin{equation}\label{B0}
\exp(\frac{-|ye^{\frac{-\theta}{2}}-x|^2}{12(1-e^{-\theta})})\leq C e^{-\frac{1}{192}\frac{|x|^2}{1-e^{-\theta}}}\leq C e^{-\frac{|x|^2}{192}}\leq Ce^{-\frac{K_0^2}{192}\sigma}  .\end{equation}
Second, using \eqref{new}, we write
\begin{eqnarray}
\displaystyle \exp(-\frac{|ye^{-\frac \theta 2} -x|^2}{12(1-e^{-\theta})})
&\displaystyle \le &Ce^{-\frac{|ye^{-\frac \theta 2}|^2}{300(1-e^{-\theta})}}
\le  Ce^{-\frac{|ye^{-\frac \theta 2}|^2}{300}} \nonumber \\
&\displaystyle \le& \frac{ C}{1+|ye^{-\frac \theta 2}|^m} = \frac{Ce^{\frac{m\theta}{2}}}{e^{\frac{m\theta}{2}}+|y|^m}
\le \frac{Ce^{\frac{m\theta}{2}}}{1+|y|^m}.\label{(4.60)}
\end{eqnarray}
Using  \eqref{Kernel}, \eqref{prob}, \eqref{dec_exp}, \eqref{B0} and \eqref{(4.60)}, we get
$$\int_\mathbb{A}\frac{K(s,\sigma, y, x)}{1+|x|^m}{\bf{1}}_{\{|x|\geq K_0\sqrt{\sigma}\}}dx \leq  C \frac{e^{\theta}e^{-\frac{K_0^2}{300}\sigma}}{\sqrt{4\pi (1-e^{-\theta})}}\frac{e^{\frac{m\theta}2}}{1+|y|^m}\int_\R e^{-\frac{|x|^2}{192(1-e^{-\theta})}}dx . $$
By change of variables , we obtain
$$\int_\mathbb{A}\frac{K(s,\sigma, y, x)}{1+|x|^m}{\bf{1}}_{\{|x|\geq K_0\sqrt{\sigma}\}}dx \leq  C \frac{e^{(1+\frac m 2)\theta-\frac{K_0^2}{300} \sigma}}{1+|y|^m}\int_\R e^{-x'^2}dx'\le C \frac{e^{(1+\frac m 2)\theta-\frac{K_0^2}{300} \sigma}}{1+|y|^m} . $$
We claim that $(1+\frac m 2)\theta-\frac{K_0^2}{300} \sigma\leq-\eta \theta$, for $\sigma $ large enough, where $\eta$ is introduced in \eqref{defeta}.
 Indeed, if $\sigma\le s_4(K_0,\rho)$ for some $s_4(K_0, \rho)$, we write for $s\in[\sigma, \sigma+\rho]$ and $\theta=s-\sigma$, 
$$(1+\frac m2 +\eta)\theta \le \frac{K_0^2}{300}\sigma$$ and the identity follows.
Thus, we conclude that
$$\int_\mathbb{A}\frac{K(s,\sigma, y, x)}{1+|x|^m}{\bf{1}}_{\{|x|\geq K_0\sqrt{\sigma}\}}dx \leq  C \frac{e^{-\eta \theta }}{1+|y|^m}. $$
{\bf{Case 2:}} If $x\in \R\setminus \mathbb{A}$,  we remark that $|ye^{-\frac{\theta}{2}}|\in [\frac{3}{4}|x|, \frac{5}{4}|x|]$. Therefore,
$$\displaystyle \frac{1}{1+|x|^m}\leq \frac{(\frac{5}{4}e^{\frac{\theta}{2}})^m }{1+|y|^m}.$$
Using part $2)$ of Lemma \ref{lem_kernel2}, we obtain there exist $\eta$ such that
$$\int_{\R \setminus\mathbb{A}}\frac{K(s,\sigma, y, x)}{1+|x|^m}{\bf{1}}_{\{|x|\geq K_0\sqrt{\sigma}\}}dx \leq  C \frac{e^{-(\frac{1}{p-1}-\eta-\frac{m}{2})\theta}}{1+|y|^m}= C \frac{e^{-\eta\theta}}{1+|y|^m},  $$
by definition \eqref{defeta} of $\eta$.
This concludes the proof of  estimate $4)$   of Lemma \ref{lem_kernel2}.\\
\section{Cauchy problem in the weighted space}
In this appendix, we prove the existence and uniqueness of a solution of equation $(\ref{eq_u})$ in the functional space $W^{1, \infty}_\beta(\R^N)$ by a fixed point argument, provided that 
$$ p \ge 1,\,  \beta(q-1)>N,\,  q\ge 2.$$
Note that these conditions are more general than the conditions in our present paper, namely \eqref{hyp} and \eqref{beta}.\\
Let $S(t)$ be the heat semigroup and let us write the equation  $(\ref{eq_u})$ in its Duhamel formulation:
$$u(t)=S(t)u_0+\int_0^t S(t-s)(g(u)+h(u))ds,$$
where $g(u)=|u|^{p-1}u$ and $h(u)=\mu |\nabla u|\int_{B(0, |x|)} |u|^{q-1}$.\\
We introduce the functional:
$$F(u)(t)=S(t)u_0+\int_0^t S(t-s)(g(u)+h(u))ds.$$
The  proof the existence of a solution $u$ of the Duhamel equation is reduced to the existence of a fixed point of $F$.\\
For the reader's convenience, we recall the following well-known smoothing effect of the heat semigroup:
\begin{equation}\label{Semigroup}
\|S(t)f\|_{L^\infty}\leq \|f\|_{L^\infty}, \quad \|\nabla S(t)f\|_{L^\infty}\leq \frac{C}{\sqrt{t}}\|f\|_{L^\infty}, \forall t>0, \; \forall f\in L^{ \infty}(\R^N).
\end{equation}
Since  we want to prove the existence in $W^{1, \infty}_\beta(\R^N)$, we need more information about the heat semigroup. We give our following result:
\begin{lem}\label{lemm_sg}
For all $0\le m<N$, the heat semigroup satisfies:
\begin{enumerate}
\item[i)] $\|(1+|x|^m)S(t) f\|_{L^\infty}\leq C \|(1+|x|^m) f\|_{L^\infty}$,
\item[ii)] $\|(1+|x|^m)S(t) \nabla f\|_{L^\infty}\leq \frac{C}{\sqrt{t}}\|(1+|x|^m) f\|_{L^\infty}$,
\end{enumerate}
for all $t>0$ and all  $f$ such that $(1+|x|^m) f\in L^\infty$.
\end{lem}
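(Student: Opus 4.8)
The plan is to deduce both bounds directly from the explicit heat kernel $G_t(z)=(4\pi t)^{-N/2}e^{-|z|^2/(4t)}$, so that $S(t)f(x)=\int_{\R^N}G_t(x-y)f(y)\,dy$, combined with the elementary weight inequality
\[
1+|x|^m\le C_m\,(1+|x-y|^m)(1+|y|^m),\qquad x,y\in\R^N,\ m\ge0,
\]
which follows from $(a+b)^m\le a^m+b^m$ when $0\le m\le1$ and from $|x|^m\le 2^m(|x-y|^m+|y|^m)$ in general, together with the Gaussian moment identity $\int_{\R^N}G_t(z)|z|^\alpha\,dz=C_{\alpha,N}\,t^{\alpha/2}$ for every $\alpha\ge0$ (change of variables $z=\sqrt t\,w$). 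I would first observe that it suffices to prove the estimates for $t\in(0,1]$, since this is all the Duhamel fixed point (run on a short interval and iterated) ever uses; the general case then follows with a constant depending only on an upper bound for $t$.

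For item i), I would set $g:=(1+|\cdot|^m)f\in L^\infty$, write $f(y)=g(y)/(1+|y|^m)$, and estimate with the weight inequality
\[
(1+|x|^m)\,|S(t)f(x)|\le\int_{\R^N}G_t(x-y)\,\frac{1+|x|^m}{1+|y|^m}\,|g(y)|\,dy\le C_m\,\|g\|_{L^\infty}\int_{\R^N}G_t(z)\,(1+|z|^m)\,dz.
\]
The last integral equals $1+C_{m,N}\,t^{m/2}\le C_{m,N}$ for $t\le1$, and taking the supremum over $x$ yields i).

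For item ii), I would use that $S(t)$ commutes with the gradient, $G_t*\nabla f=(\nabla G_t)*f$, so that $S(t)\nabla f(x)=\int_{\R^N}\nabla G_t(x-y)f(y)\,dy$ with $|\nabla G_t(z)|=\frac{|z|}{2t}\,G_t(z)$, and then repeat the computation of i) with $\nabla G_t$ replacing $G_t$:
\[
(1+|x|^m)\,|S(t)\nabla f(x)|\le C_m\,\|g\|_{L^\infty}\int_{\R^N}\frac{|z|}{2t}\,G_t(z)\,(1+|z|^m)\,dz=C\,\|g\|_{L^\infty}\Big(C_{1,N}\,t^{-1/2}+C_{m+1,N}\,t^{(m-1)/2}\Big).
\]
Since $m\ge0$ forces $(m-1)/2\ge-1/2$, one has $t^{(m-1)/2}\le t^{-1/2}$ for $t\le1$, so the right-hand side is $\le C\,t^{-1/2}\|g\|_{L^\infty}$, which is ii).

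There is no genuine difficulty in this lemma; the only point deserving a little care is the bookkeeping of the powers of $t$ produced by the Gaussian moments — in particular, checking that the seemingly singular term $t^{(m-1)/2}$ in ii) is controlled by $t^{-1/2}$ thanks to $m\ge0$, and accepting that the constant in i) is merely locally bounded in $t$ rather than uniform, which is harmless for the subsequent construction of the solution of $(\ref{eq_u})$ in $W^{1,\infty}_\beta(\R^N)$ by the contraction mapping principle on a small time interval.
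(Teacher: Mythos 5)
Your computation is fine as far as it goes (the submultiplicative weight bound $1+|x|^m\le C_m(1+|x-y|^m)(1+|y|^m)$ plus Gaussian moments is a perfectly sound way to run the estimate, and your treatment of item ii) via $S(t)\nabla f=(\nabla G_t)*f$ matches the paper's integration by parts), but it does not prove the lemma as stated. The statement asserts the bounds for \emph{all} $t>0$ with a constant $C$ independent of $t$; your argument yields $C(1+t^{m/2})$ in i) and $C(t^{-1/2}+t^{(m-1)/2})$ in ii), i.e.\ a constant that is only locally bounded in time, and you explicitly restrict to $t\in(0,1]$. This loss is not a cosmetic bookkeeping issue: your proof never uses the hypothesis $0\le m<N$, and the uniform-in-$t$ estimate is actually \emph{false} when $m>N$ (take $f=(1+|x|^m)^{-1}$; for $\sqrt t\gg|x|$ one has $S(t)f(x)\sim c\,t^{-N/2}$ since $f$ is integrable, so $(1+|x|^m)S(t)f(x)\sim t^{(m-N)/2}\to\infty$ along $|x|\sim\sqrt t$). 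So the missing ingredient is precisely the mechanism by which $m<N$ produces a $t$-independent constant.

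The paper gets this by splitting the $y$-integral rather than the weight: on $\{|x|\le 2|y|\}$ the ratio $|x|^m/|y|^m$ is bounded by $2^m$ and the full Gaussian integrates to $1$; on the complement $\{|y|<|x|/2\}$ one has $|x-y|\ge|x|/2$, so the kernel is bounded by $(4\pi t)^{-N/2}e^{-|x|^2/(16t)}$, and $\int_{B(0,|x|/2)}|y|^{-m}dy=C|x|^{N-m}$ — finite exactly because $m<N$ — which combines into $C\,(|x|/\sqrt t)^{N}e^{-|x|^2/(16t)}\le C$ uniformly in $t$ and $x$; item ii) then follows from $|z|e^{-|z|^2/4}\le Ce^{-|z|^2/8}$ and the same splitting, giving the clean $C/\sqrt t$ for all $t>0$. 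You are right that for the Appendix C application (a contraction on a short interval $[0,T]$ with $T$ small) your time-local version would suffice, so the downstream construction is not in danger; but as a proof of the lemma as written, the uniform-in-$t$ claim — and with it the role of the hypothesis $m<N$ — is left unproved, and your method cannot supply it without the domain-splitting (or an equivalent) argument.
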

\begin{proof}
We recall that the heat semigroup is defined explicitly by
$$\displaystyle S(t) f(x)=\int_{\R^N}\frac{e^{-\frac{|x-y|^2}{4t}}}{(4\pi t)^{\frac{N}{2}}} f(y) dy.$$
We see that
$$\displaystyle ||x|^m S(t) f(x)|\leq \int_{\R^N}\frac{|x|^m}{|y|^m}\frac{e^{-\frac{|x-y|^2}{4t}}}{(4\pi t)^{\frac{N}{2}}}dy\;\; \|(1+|y|^m) f(y)\|_{L^\infty}.$$
Let $\mathcal{A}=\{y\in \R^N, \; \mbox{such that }\; |x|\leq 2 |y|\}$. We decompose the previous integral as follows:
$$\displaystyle\int_{\R^N}\frac{|x|^m}{|y|^m}\frac{e^{-\frac{|x-y|^2}{4t}}}{(4\pi t)^{\frac{N}{2}}}dy=\int_{\mathcal{A}}\frac{|x|^m}{|y|^m}\frac{e^{-\frac{|x-y|^2}{4t}}}{(4\pi t)^{\frac{N}{2}}}dy+\int_{\R^N\backslash \mathcal{A}}\frac{|x|^m}{|y|^m}\frac{e^{-\frac{|x-y|^2}{4t}}}{(4\pi t)^{\frac{N}{2}}}dy .$$
It is easy to see that
\begin{eqnarray}\label{e_1}
\int_{\mathcal{A}}\frac{|x|^m}{|y|^m}\frac{e^{-\frac{|x-y|^2}{4t}}}{(4\pi t)^{\frac{N}{2}}}dy\leq 2^m \int_{\R^N}\frac{e^{-\frac{|x-y|^2}{4t}}}{(4\pi t)^{\frac{N}{2}}}dy\leq C.
\end{eqnarray}
On the other hand, for  $y\in \R^N\backslash \mathcal{A}  $, we have
$$|x-y|\geq ||x|-|y||\geq \frac{1}{2}|x|.$$
Hence,
$$\displaystyle\int_{\R^N\backslash \mathcal{A}}\frac{|x|^m}{|y|^m}\frac{e^{-\frac{|x-y|^2}{4t}}}{(4\pi t)^{\frac{N}{2}}}dy \leq |x|^m\frac{e^{-\frac{ |x|^2}{16t}}}{(4\pi t)^{\frac{N}{2}}}\int_{\R^N\backslash \mathcal{A}}\frac{dy}{|y|^m}.$$
Since $\R^N\backslash \mathcal{A}=B(0, \frac{|x|}{2})$ and $m<N$, we get
\begin{eqnarray}\label{e_3}
\int_{\R^N\backslash \mathcal{A}}\frac{dy}{|y|^m}=C\int_0^{\frac{|x|}{2}} r^{N-m-1} dr= C(\frac{|x|}{2})^{N-m}.
\end{eqnarray}
Therefore,
$$\displaystyle\int_{\R^N\backslash \mathcal{A}}\frac{|x|^m}{|y|^m}\frac{e^{-\frac{|x-y|^2}{4t}}}{(4\pi t)^{\frac{N}{2}}}dy \leq C (\frac{|x|}{\sqrt{t}})^N\displaystyle e^{-\frac{ |x|^2}{16t}}.$$
Using the fact that $z\mapsto z^N \displaystyle e^{-\frac{z^2}{16}}$ is a bounded function, we obtain
\begin{eqnarray}\label{e_2}
\displaystyle\int_{\R^N\backslash \mathcal{A}}\frac{|x|^m}{|y|^m}\frac{e^{-\frac{|x-y|^2}{4t}}}{(4\pi t)^{\frac{N}{2}}}dy \leq C
\end{eqnarray}
From $(\ref{Semigroup})$, $(\ref{e_1})$ and $(\ref{e_2})$, we deduce the estimate $i)$.\\
Now, we give the proof of $ii)$.
By integration by parts, we get
$$\displaystyle S(t) \nabla f(x)=\int_{\R}\frac{x-y}{2t}\frac{e^{-\frac{(x-y)^2}{4t}}}{(4\pi t)^{\frac{N}{2}}} f(y) dy.$$
Then, we write
$$\displaystyle ||x|^m S(t) \nabla f(x)|\leq \frac{1}{\sqrt{t}}\int_{\R} \frac{|x|^m}{|y|^m}\frac{|x-y|}{2\sqrt{t}}\frac{e^{-\frac{(x-y)^2}{4t}}}{(4\pi t)^{\frac{N}{2}}}dy\;  \|(1+|y|^m) f(y)\|_{L^\infty}.$$
 Since $|z|e^{-|z|^2/4} \le Ce^{-|z|^2/8}$ and the proof of item i) holds if one replaces $4$ by $8$ in the heat kernel,
we see that
$$ \displaystyle\int_{\R^N}\frac{|x|^m}{|y|^m} \frac{|x-y|}{2\sqrt{t}}\frac{e^{-\frac{(x-y)^2}{4t}}}{(4\pi t)^{\frac{N}{2}}} dy \leq 2^m  C\int_{\R^N}\frac{|x|^m}{|y|^m}\frac{e^{-\frac{|x-y|^2}{8t}}}{(4\pi t)^{\frac{N}{2}}}dy \leq C.$$
This concludes the proof of Lemma \ref{lemm_sg}.
\end{proof}

Now, we start the application of a fixed-point argument to solve the Cauchy problem of equation $(\ref{eq_u})$, in the space $ W^{1,\infty}_\beta(\R^N)$, locally in time.\\
Let $T>0$ and consider  $C([0,T], W^{1,\infty}_\beta(\R^N))$  the space of all continuous functions from $[0, T]$ into $W^{1,\infty}_\beta(\R^N)$ equipped with the norm
$$\|u\|_{L^\infty_t(W^{1, \infty}_\beta)}= \sup_{t\in [0, T]}(\|(1+|x|^\beta)u\|_{L^\infty}^2+ \|(1+|x|^\beta)\nabla u\|_{L^\infty}^2)^{\frac{1}{2}}.$$
Our next goal is to find a positive constant $r$ such that the function $F$ is a strict contraction in $B(0, r)$, where $B(0, r)$ is the ball in   $C([0,T], W^{1,\infty}_\beta(\R^N))$ of center $0$ and radius $r$.\\
In a first step, we prove that $F$ is locally lipschitz continuous.\\
 Let $r>0$, for any $u_1, u_2\in B(0, r)$, we write
 $$(F(u_1)-F(u_2))(t)=\int_0^t S(t-s)(g(u_1)-g(u_2)+h(u_1)-h(u_2))ds. $$
 Applying the above Lemma, we obtain
 \begin{eqnarray}\label{C1}
 \|(1+|x|^\beta)(F(u_1)-F(u_2))\|_{L^\infty}\!&\leq& \!C \Big[\int_0^t \|(1+|x|^\beta)(g(u_1)-g(u_2))\|_{L^\infty}\nonumber\\
 &+&\!\int_0^t\|(1+|x|^\beta)(h(u_1)-h(u_2))\|_{L^\infty}\Big],
 \end{eqnarray}
 and
  \begin{eqnarray}\label{C2}
 &&\|(1+|x|^\beta)\nabla (F(u_1)-F(u_2))\|_{L^\infty}\\
 &&\!\leq \int_0^t \frac{C}{\sqrt{t-s}} \|(1+|x|^\beta)(g(u_1)-g(u_2))\|_{L^\infty}+\!\int_0^t \frac{C}{\sqrt{t-s}}\|(1+|x|^\beta)(h(u_1)-h(u_2))\|_{L^\infty}.\nonumber
 \end{eqnarray}
 %It is easy to prove,
Since  for any $u_1, u_2\in \!B(0, r) $, we have,
 $\|u_i\|_{L^\infty}\!\le\!\!\|(1\!+\!|x|^\beta)u_i\|_{L^\infty}\!\le\!
 r$,
 it is easy to prove that,
 \begin{eqnarray}\label{C3}
 \|(1+|x|^\beta)(g(u_1)-g(u_2))\|_{L^\infty}\leq C r^{p-1}\|(1+|x|^\beta)(u_1-u_2)\|_{L^\infty}.
 \end{eqnarray}
 Now, we estimate $\|(1+|x|^\beta)(h(u_1)-h(u_2))\|_{L^\infty}$. We write
 \begin{eqnarray*}
 |h(u_1)-h(u_2)|
 &\leq& \mu\Big( |\nabla u_1-\nabla u_2|\int_{B(0, |x|)}|u_1|^{q-1}+|\nabla u_2|\int_{B(0, |x|)}\Big||u_1|^{q-1}-|u_2|^{q-1}\Big|\Big)
 \end{eqnarray*}
 On the one hand, since $\|(1+|y|^\beta)u_1\|_{L^\infty}\leq r$ and $(q-1)\beta> N$, we have
 $$\int_{B(0, |x|)}|u_1|^{q-1}\leq \|(1+|y|^\beta)u_1\|_{L^\infty}^{q-1}\int_{\R^N} \frac{dy}{(1+|y|^\beta)^{q-1}}\leq Cr^{q-1}.$$
 On the other hand, we write
 $$\int_{B(0, |x|)}\Big||u_1|^{q-1}-|u_2|^{q-1}\Big|\leq \int_{B(0, |x|)}\frac{ (1+|y|^\beta)^{q-1}\Big||u_1|^{q-1}-|u_2|^{q-1}\Big|}{(1+|y|^\beta)^{q-1}}dy.$$
 Since $q>2$ by $(\ref{hyp})$, we see that
 $$(1+|y|^\beta)^{q-1}\Big||u_1|^{q-1}-|u_2|^{q-1}\Big|\leq C r^{q-2}(1+|y|^\beta)|u_1-u_2|.$$
 Thus,
  $$\displaystyle \int_{B(0, |x|)}\Big||u_1|^{q-1}-|u_2|^{q-1}\Big|\leq Cr^{q-2} \|(1+|x|^\beta)(u_1-u_2)\|_{L^\infty} .$$
 Since $ \|\nabla u_2\|_{L^\infty} \leq \|(1+|x|^\beta)\nabla u_2\|_{L^\infty} \leq r$, we deduce that
  \begin{eqnarray}\label{C4}
 \|(1+|x|^\beta) (h(u_1)-h(u_2))\|_{L^\infty}
 &\leq&  Cr^{q-1}\Big( \|(1+|x|^\beta)(u_1-u_2)\|_{L^\infty} \nonumber\\
 &&+ \|(1+|x|^\beta)\nabla(u_1-u_2)\|_{L^\infty}\Big )
 \end{eqnarray}
 Collecting estimates $(\ref{C1})$, $(\ref{C2})$, $(\ref{C3})$ and
 $(\ref{C4})$, we obtain
 $$\|F(u_1)-F(u_2)\|_{L_t^\infty(W^{1,\infty}_\beta)}\leq C(r^{p-1}+r^{q-1})(T^2+T)^{\frac{1}{2}}\|u_1-u_2\|_{L_t^\infty(W^{1,\infty}_\beta)}.$$
 If we assume $T=T(r)$ small enough such that
 $C(r^{p-1}+r^{q-1})(T^2+T)^{\frac{1}{2}}\leq \frac{1}{2} $, we obtain
 $$\|F(u_1)-F(u_2)\|_{L_t^\infty(W^{1,\infty}_\beta)}\leq \frac{1}{2}\|u_1-u_2\|_{L_t^\infty(W^{1,\infty}_\beta)}.$$
 ~\\
 In the next step, we prove that $F(B(0, r))\subset B(0, r)$. We rewrite, for $u\in B(0,r)$
 $$\|F(u)\|_{L_t^\infty(W^{1,\infty}_\beta)}=\|F(u)-F(0)+ F(0)\|_{L_t^\infty(W^{1,\infty}_\beta)}\leq \frac{r}{2}+\|F(0)\|_{L_t^\infty(W^{1,\infty}_\beta)}.$$
 According to Lemma \ref{lemm_sg}, we have
 $$ \|(1+|x|^\beta)S(t)u_0 \|_{L^\infty}\leq C \|(1+|x|^\beta)u_0\|_{L^\infty},$$
 and
 $$ \|(1+|x|^\beta)S(t)\nabla u_0 \|_{L^\infty}\leq C \|(1+|x|^\beta)\nabla u_0\|_{L^\infty}.$$
 It follows then  that
 $$\|F(0)\|_{L_t^\infty(W^{1,\infty}_\beta)}\leq C\|u_0\|_{L_t^\infty(W^{1,\infty}_\beta)}.$$
 If we assume that
 $$ C\|u_0\|_{L_t^\infty(W^{1,\infty}_\beta)}\leq \frac{r}{4}, $$
 then
 $$\|F(u)\|_{L_t^\infty(W^{1,\infty}_\beta)}\leq \frac{3}{4}r.$$
We  conclude that,  there exist   $r>0$  such that the function $F: B(0, r)\to B(0, r)$ is a strict contraction and   that $F$ admits a unique fixed point $u\in B(0,r)$.\\
\section*{Acknowledgments} The authors would like to
thank the referee for useful comments which helped to improve this paper.

\end{document}